\numberwithin{equation}{section}
\theoremstyle{definition}
\newtheorem{definition}{Definition}[section]
\theoremstyle{definition}
\theoremstyle{definition}
\newtheorem{remark}{Remark}[section]
\theoremstyle{remark}
\theoremstyle{plain}
\newtheorem{prop}{Proposition}[section]
\theoremstyle{plain}
\newtheorem{theorem}{Theorem}[section]
\newtheorem*{theorem*}{Theorem}
\theoremstyle{plain}
\newtheorem{cor}{Corollary}[section]
\theoremstyle{plain}
\newtheorem{lemma}{Lemma}[section]
\theoremstyle{plain}
\newtheorem*{thm}{Main Theorem}
\theoremstyle{remark}
\newtheorem*{setup}{Set-up}
\theoremstyle{definition}
\newtheorem*{results}{Main results}
\newtheorem*{outline}{Outline of the paper}
\newtheorem*{aknowledgments}{Aknowledgments}
\newcommand{\N}{\mathbb{N}}
\newcommand{\CP}{\mathbb{P}}
\newcommand{\C}{\mathbb{C}}
\newcommand{\Ha}{\mathbb{H}}
\newcommand{\A}{\mathbb{A}}
\newcommand{\sheaf}[1]{\mathcal{#1}}
\newcommand{\mf}{\mathcal{M}}
\newcommand{\Mss}{M^{\mu ss}}
\newcommand{\grothgrp}[1]{K(#1)}
\newcommand{\SL}[1]{\textup{SL}(#1)}
\newcommand{\grothnum}[1]{\grothgrp{#1}_\text{num}}
\newcommand{\schC}{(\text{Sch}^{ft}/\C)}
\newcommand{\schCW}{(\text{Sch}^{wn}/\C)}
\newcommand{\Rss}{R^{ss}}
\newcommand{\Sym}[1]{\text{Sym}#1}
\DeclareMathOperator{\Pic}{Pic}
\DeclareMathOperator{\Coker}{Coker}
\DeclareMathOperator{\Hom}{Hom}
\DeclareMathOperator{\sheafHom}{\sheaf{H}\!\textit{om}\,}
\DeclareMathOperator{\Ext}{Ext}
\DeclareMathOperator{\rk}{rk}
\DeclareMathOperator{\Exts}{\mathcal{E}\!\mathit{xt}}
\DeclareMathOperator{\depth}{depth}
\DeclareMathOperator{\length}{length}
\DeclareMathOperator{\codim}{codim}
\DeclareMathOperator{\Ass}{Ass}
\DeclareMathOperator{\Spec}{Spec}
\DeclareMathOperator{\Quot}{Quot}
\DeclareMathOperator{\Hilb}{Hilb}
\DeclareMathOperator{\Supp}{Supp}
\DeclareMathOperator{\Grass}{Grass}
\DeclareMathOperator{\Coh}{Coh}
\DeclareMathOperator{\gr}{gr}
\newcommand{\Address}{{
  \bigskip
  \footnotesize

  \textsc{Universit\'e de Lorraine, IECL, F-54000 Nancy, France}\par\nopagebreak
  \textit{E-mail address}: \texttt{mihai-cosmin.pavel@univ-lorraine.fr}
}}
\begin{document}

\title{Moduli spaces of slope-semistable pure sheaves}
\author{Mihai Pavel}
\date{}

\maketitle
\begin{abstract}
    We construct a moduli space of slope-semistable pure sheaves, building upon previous work of Le Potier and Jun Li on torsion-free sheaves over smooth surfaces. In particular, our construction provides a compactification of the Simpson moduli space of slope-stable reflexive sheaves. We also prove an effective restriction theorem for slope-(semi)stable pure sheaves following an approach due to Langer.  
\end{abstract}
\section{Introduction}

Moduli spaces of sheaves have been extensively researched throughout the last decades, especially over low-dimensional base schemes. Their study spans across a large variety of fields, with important applications in hyperkähler geometry, gauge theory, enumerative geometry, etc. The primary point of departure concerns the construction of a moduli space of sheaves in the category of schemes. In this regard, one has to restrict to a bounded family of sheaves, in order to ensure that the moduli space will be at least of finite type over the base field. This is usually done by imposing certain stability conditions on sheaves, such as slope-stability or Gieseker-stability. The former condition was first introduced by Mumford \cite{mumford63} while studying the moduli of sheaves over curves, and later generalized to higher dimensions by Takemoto \cite{takemoto1972stable}. This notion is naturally defined for torsion-free sheaves over smooth varieties, although it has a version, called $\hat{\mu}$-stability, well defined also for pure sheaves that might be supported in positive codimension. We shall introduce $\hat{\mu}$-semistability in Section~\ref{section:Preliminaries}, but for a more detailed account of these concepts and their basic properties we refer the reader to \cite[Section~1]{huybrechts2010geometry}.

The precise formulation of the moduli problem is usually done using the categorical language of (co)representable functors. Let $X$ be a smooth projective variety over $\C$ together with a fixed polarization $\sheaf{O}_X(1)$. Choose a numerical Grothendieck class $c \in \grothnum{X}$, that will fix the topological type of sheaves, and consider the moduli functor
\begin{align*}
\mf : \schC ^\text{op} \to (\text{Sets})
\end{align*}
that associates to any scheme $S$ of finite type over $\C$ the set of all equivalence classes of $S$-flat families of slope-semistable sheaves of class $c$ on $X$. In this language, the existence of a fine moduli space of sheaves is equivalent to the respresentability of the functor $\mf$. In general, as semistable sheaves have many automorphisms, we should not expect that $\mf$ is represented. However, one may still investigate if the functor $\mf$ is corepresented. For this to happen, one has to work instead with the notion of Gieseker-semistability, a stronger condition than slope-semistability, that was used by Gieseker \cite{gieseker77} to construct projective moduli spaces of torsion-free sheaves over smooth surfaces. His methods were further generalized by Maruyama \cite{maruyama77} in higher dimensions. In addition, with a slightly different approach, Simpson \cite{simpson1994moduli} was later able to work-out the construction in the case of pure sheaves. It is noteworthy that all these constructions are based on the well-known GIT technique developed by Mumford \cite{mumfordGIT}.

From an algebraic point of view, working with Gieseker-stability gives the right moduli space of sheaves with good functorial properties. On the other hand, a strong motivation for studying slope-(semi)stability spurs from the so-called Kobayashi--Hitchin correspondence, which asserts that holomorphic vector bundles over compact complex manifolds admit Hermitian Yang--Mills (HYM) connections if only if they are slope-polystable. It is not suprising that this significant result, also known as the Donaldson--Uhlenbeck--Yau Theorem \cite{donaldson1985anti, uhlenbeck1986existenceHYM}, has found many applications in the literature, as it draws a deep connection between algebraic geometry and gauge theory. In what follows, we present only the algebraic side of the story, but we refer the reader to \cite{friedman1994smooth} for the analytical aspects of the theory. 

In contrast to the case of Gieseker-stability, there are still many open problems concerning the construction of a right moduli space of slope-semistable sheaves. In this respect, Langer \cite{langer2021moduli} has already pointed out several drawbacks of working with slope-semistability, such as the fact that the moduli stack of torsion-free slope-semistable sheaves is neither $S$-complete nor $\Theta$-reductive (in the sense of \cite{alper2019existence}). This indicates that the existence of a moduli space corepresenting the moduli functor $\mf$ in the category of schemes is very unlikely. Building upon work of Le Potier and Jun Li, Huybrechts and Lehn constructed in \cite[Ch.~8]{huybrechts2010geometry} a projective scheme $\Mss$ parametrizing slope-semistable sheaves with fixed determinant on a smooth surface. Even though the scheme $\Mss$ does not corepresent $\mf$, it turns out that $\Mss$ is closely related to the Donaldson--Uhlenbeck compactification of HYM connections (see \cite{li1993algebraic}), a reminder of the close ties with gauge theory. Greb and Toma \cite{greb2017compact} later generalized their construction over higher dimensional base schemes endowed with a multipolarization, but only within the category of weakly normal varieties. Also their moduli spaces over higher-dimensional bases found applications in gauge theory (see \cite{greb2021HYM}). We wish to emphasize that the work mentioned above treats only the case of torsion-free sheaves of fixed determinant on smooth varieties.

\begin{results}
In this paper we aim to construct a moduli space of $\hat{\mu}$-semistable pure sheaves on $X$. As before, the general strategy follows ideas of Le Potier \cite{le1992fibre} and Jun Li \cite{li1993algebraic}, even though our approach differs in several relevant aspects that we carefully point out below: 

\begin{enumerate}[wide, labelwidth=!,nosep]
  \item The key ingredient in all these constructions is the Mehta--Ramanathan restriction theorem, which asserts that slope-(semi)stability is preserved by restriction to a general divisor of sufficiently large degree. This was previously known for torsion-free sheaves over smooth varieties (see \cite{mehta1982semistable, mehta1984restriction,flenner1984restrictions, langer2004semistable}). We prove here \textit{a restriction theorem for pure sheaves}, whose support schemes might be lower dimensional and singular.

\begin{theorem*}
Let $E$ be a $\hat{\mu}$-(semi)stable sheaf on a polarized smooth variety $(X,\sheaf{O}_X(1))$ over $\C$. Then the restriction $E|_D$  to a general divisor $D \in |\sheaf{O}_X(a)|$ remains $\hat{\mu}$-(semi)stable for $a \gg 0$. 
\end{theorem*}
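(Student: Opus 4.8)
The plan is to follow Langer's proof of the Mehta--Ramanathan restriction theorem, systematically replacing the rank, degree and discriminant of a torsion-free sheaf by the three leading Hilbert coefficients $\alpha_d(E),\alpha_{d-1}(E),\alpha_{d-2}(E)$ of $E$, where $d=\dim\Supp(E)$, and the slope by $\hat{\mu}(E)=\alpha_{d-1}(E)/\alpha_d(E)$. First I would dispose of the degenerate cases and fix the bookkeeping. If $d\le 1$ then $E|_D$ is $0$-dimensional for general $D$, hence automatically $\hat{\mu}$-semistable, so assume $d\ge 2$; the $\hat{\mu}$-stable case will be reduced to the $\hat{\mu}$-semistable one at the very end. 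For $a\gg 0$ the line bundle $\sheaf{O}_X(a)$ is very ample, a general $D\in|\sheaf{O}_X(a)|$ avoids the finitely many associated points of $E$, so $0\to E(-a)\to E\to E|_D\to 0$ is exact, and a standard $\Exts$-sheaf argument (reducing purity of $E|_D$ on $D$ to purity of $E$ on $X$) shows that $E|_D$ is again pure, now of dimension $d-1$. Comparing Hilbert polynomials in this sequence yields $\hat{\mu}(E|_D)=\hat{\mu}(E)-a/2$ for every such $D$, so the theorem is equivalent to the bound $\hat{\mu}_{\max}(E|_D)\le\hat{\mu}(E)-a/2$ for general $D$ and $a\gg 0$.

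Next I would spread the hypothetical destabilising subsheaves over the linear system $\mathbb{P}:=|\sheaf{O}_X(a)|$. Let $\mathcal{D}\subset X\times\mathbb{P}$ be the universal divisor with projections $p,q$, and $\mathcal{E}:=(p^{*}E)|_{\mathcal{D}}$, flat over $\mathbb{P}$. Assuming for contradiction that $E|_{D_t}$ is not $\hat{\mu}$-semistable for all $t$ in a dense subset of $\mathbb{P}$, one uses boundedness of the family of $\hat{\mu}$-semistable sheaves of the numerical type of $E$ (and of their divisorial restrictions), together with flattening, to obtain over a dense open $U\subset\mathbb{P}$ a $q$-flat subsheaf $\mathcal{F}\subset\mathcal{E}|_{q^{-1}(U)}$ whose fibre over $t$ is the maximal destabilising subsheaf $F_t\subset E|_{D_t}$; equivalently, a section of the relative $\Quot$-scheme over $U$. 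Restricting to a general pencil $\ell\cong\mathbb{P}^1\subset\mathbb{P}$ and using properness of $\Quot$ over $\ell$, this section extends over all of $\ell$; passing to the blow-up $\rho\colon\widetilde{X}\to X$ along the base locus of the pencil --- which fibres over $\ell$ with general fibre $D_t$ --- the subsheaf $\mathcal{F}$ propagates to a saturated subsheaf $\widetilde{\mathcal{F}}\subset\rho^{*}E$ restricting on a general fibre to a subsheaf containing $F_t$. Since $E$ is $\hat{\mu}$-semistable and the polarisation on $\widetilde{X}$ is pulled back from $X$, the slope of $\widetilde{\mathcal{F}}$ is bounded above by that of $\rho^{*}E$, up to corrections coming from the exceptional divisor.

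The numerical heart of the proof, in Langer's style, is to turn these exceptional-divisor corrections into an effective estimate by feeding in a Bogomolov-type inequality. Applying such an inequality to the $\hat{\mu}$-semistable Harder--Narasimhan factors of $E|_{D_t}$ --- whose discriminants are themselves controlled, via the exact sequence above, by the (nonnegative) discriminant of the $\hat{\mu}$-semistable sheaf $E$ --- and combining with a Hodge-index-type argument, one should arrive at
\[
0<\hat{\mu}_{\max}(E|_{D_t})-\Bigl(\hat{\mu}(E)-\tfrac{a}{2}\Bigr)\le\frac{C(E)}{a^{2}},
\]
where $C(E)$ depends only on $\alpha_d(E),\alpha_{d-1}(E),\alpha_{d-2}(E)$. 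On the other hand, when positive this difference is a rational number whose denominator divides a fixed multiple of $\alpha_{d-1}(F_t)\,\alpha_d(E)\le a\,\alpha_d(E)^{2}$, hence is at least $c(E)/a$ for some positive $c(E)$ depending only on $E$. For $a\gg 0$ the two estimates are incompatible, which is the desired contradiction; the $\hat{\mu}$-stable statement follows by the same spreading-out argument applied to a Jordan--Hölder subsheaf of $E|_{D_t}$ of slope exactly $\hat{\mu}(E|_D)$.

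I expect the main obstacle to be precisely this Bogomolov-type inequality, together with the supporting Harder--Narasimhan estimates, for \emph{pure} sheaves whose support may be singular and of dimension strictly less than $\dim X$: there one has no genuine rank or Chern classes, so the ``discriminant'' must be defined purely through the leading Hilbert coefficients and its nonnegativity on $\hat{\mu}$-semistable sheaves established from scratch --- most plausibly by pushing $E$ forward along the embedding $X\hookrightarrow\mathbb{P}^{N}$ given by a power of $\sheaf{O}_X(1)$ and adapting Langer's inequalities to pure sheaves on projective space. The softer ingredients --- purity and flatness of the restricted families, the blow-up and the slope comparison on $\widetilde{X}$, and checking that the destabilising denominators grow only like $a$ so that the quadratic decay above is enough --- are technical but essentially routine adaptations of the torsion-free case. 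If one is content with a non-effective statement, the Bogomolov input can be replaced by a Flenner-style dimension count over the linear system.
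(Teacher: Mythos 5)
Your overall plan --- an effective Langer-style argument hinging on a Bogomolov-type inequality --- points in the right direction, and you have correctly located the crux: a discriminant, and a Bogomolov inequality, for pure sheaves whose support may be singular and of dimension less than $\dim X$. But the device you propose for supplying it does not work, and this is precisely where the paper's proof has its one essential new ingredient. Pushing $E$ forward along the closed embedding $X\hookrightarrow\CP^N$ leaves you with a sheaf that is still torsion on $\CP^N$: it has rank zero and no discriminant to which the Hodge index theorem or Langer's inequality could be applied. What the paper does instead is choose a general $(d+1)$-dimensional subspace $W\subset H^0(Y,\sheaf{O}_Y(1))$, where $Y=\Supp(E)$, giving a \emph{finite surjection} $\pi\colon Y\to\CP(W)\cong\CP^d$ (a Noether normalization). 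Then $\pi_*E$ is an honest torsion-free sheaf on the smooth $\CP^d$ with $\rk(\pi_*E)=\alpha_d(E)$ and $\hat{\mu}(E)=\mu(\pi_*E)+(d+1)/2$, so the classical discriminant $\Delta(\pi_*E)$ is available. The price is that $\pi_*$ does not preserve semistability; this is repaired by Langer's estimates $\mu_{\max}(\pi_*E)-\mu(\pi_*E)\le\hat{\mu}_{\max}(E)-\hat{\mu}(E)+\rk(\pi_*E)^2$ and its dual (Lemma~\ref{lemma:Langerpushforward}), which feed into the inequality $\Delta(\pi_*G)H^{d-2}+r^2(\hat{\mu}_{\max}(G)-\hat{\mu}(G))(\hat{\mu}(G)-\hat{\mu}_{\min}(G))\ge -r^6$ of Lemma~\ref{lemma:DeltaG}. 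Without the finite projection, or an equivalent substitute, your sketch has no discriminant to estimate and the numerical heart of the argument cannot be carried out.

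A secondary divergence: the paper does not use the Mehta--Ramanathan machinery of spreading destabilizing subsheaves over the linear system, restricting to pencils and blowing up. Following \cite[Thm.~5.2]{langer2004semistable}, it works with a single divisor: if $E|_{\pi^{-1}(D)}$ were unstable with destabilizing quotient $T$, one forms the elementary modification $G=\Ker(E\to T)$, computes $\Delta(\pi_*G)$ from the two exact sequences $0\to G\to E\to T\to 0$ and $0\to E(-\pi^{-1}(D))\to G\to S\to 0$, and derives a contradiction once $a$ exceeds an explicit bound; in the stable case the gap term $1/r(r-1)$ comes from subsheaves of $E$ itself and is independent of $a$, so no delicate denominator count on $E|_D$ is needed. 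This is both lighter than your relative-Quot/pencil/blow-up setup and what makes the theorem effective. Finally, passing from divisors pulled back from $\CP(W)$ to general members of $|\sheaf{O}_X(a)|$ requires one more step (openness of semistability in the flat family over the incidence variety), which your sketch elides.
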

In fact, in Section~\ref{section:RestrictionThms} we prove something more, by also giving an effective bound on the degree of the divisor from which the theorem holds. The methods employed in the proof are based on \cite{langer2004semistable}. Due to the geometric importance of slope semistability, our restriction theorem is of independent interest and might have applications in the study of moduli spaces of pure sheaves supported in positive codimension. Our result also generalizes a theorem of H\"oring and Peternell \cite[Lem.~2.11]{horing2019algebraic}, which they apply to the study of the Beauville-Bogomolov decomposition for minimal models. Their result is the only restriction theorem we are aware of over singular spaces.

\item To ease up the presentation, we write down the construction when $X$ is endowed with a single polarization $\sheaf{O}_X(1)$, but the same techniques could be adapted for a multipolarization. Since the $\hat{\mu}$-semistable sheaves of class $c$ on $X$ form a bounded family, cf. \cite[Thm.~3.3.7]{huybrechts2010geometry}, they all fit inside a locally closed subscheme $\Rss$ of an appropriate Quot scheme (see Section~\ref{section:Construction}). However, any quotient of $\Rss$ is uniquely determined only up to the action of a linear group $G$. We would like to quotient $\Rss$ by $G$, but unfortunately this problem does not admit a good GIT quotient. Nevertheless, $\Rss$ is endowed with a natural $G$-equivariant line bundle $\sheaf{L}$ (see Section~\ref{section:Construction}), and we will show in Theorem~\ref{thm:semiAmpleness} that a power of $\sheaf{L}$ is globally generated by $G$-invariant sections, after replacing $\Rss$ by its weak normalization if necessary. To achieve this, we will restrict the universal family of quotients over $\Rss$ to a smooth complete intersection $C \subset X$ such that its support has fiber dimension one, and where slope-(semi)stability coincides with Gieseker-(semi)stability. Through this process, for a fixed quotient $[E] \in \Rss$, our restriction theorem ensures that its $\hat{\mu}$-semistability is preserved if we choose $C$ sufficiently general of large enough degree. Hence $E|_C$ lies inside the Simpson moduli space $M^{Gss}_C$ of $1$-dimensional (Gieseker-)semistable sheaves on $C$ \cite{simpson1994moduli}. This will be enough to construct a $G$-invariant rational map $\psi_E: \Rss \dashrightarrow M^{Gss}_C$, well-defined around $[E]$, such that some power $\sheaf{L}^\nu$ is the pull-back via $\psi_E$ of an ample line bundle on $M^{Gss}_C$. This way we will obtain a local $G$-invariant section of $\sheaf{L}^\nu$ non-vanishing at $[E]$, which moreover will be defined outside a closed subset of codimension $\geq 2$ in $\Rss$ (see Section~\ref{subsect:semiamplenessTheorems} for the details). Then, by using the weak normality assumption as in \cite{greb2017compact}, we will extend this section over the whole $\Rss$. Repeating this construction around each point of $\Rss$ will show that a power of $\sheaf{L}$ is globally generated by $G$-invariant sections. 

In \cite{huybrechts2010geometry} and \cite{greb2017compact}, the authors follow the same lines as above, except that they use the Gieseker--Maruyama construction of $M^{Gss}_C$, which endows $M^{Gss}_C$ with a relative ample line bundle $\sheaf{A}$ over $\Pic(C)$. In order to separate points of $\Rss$, they pull back sections (of some power) of $\sheaf{A}$ from $M^{Gss}_C$ to $\Rss$, and for this reason they need to fix the determinant of sheaves. Our approach overcomes this restriction, we \textit{do not fix the determinant} since we use an (absolute) ample line bundle over $M^{Gss}_C$ in the corresponding case, as given by Simpson's construction. Also, one can immediately derive from our methods a relative moduli space over $\Pic(X)$. 
  \item To end the construction, one then considers the Proj-scheme of a $\C$-algebra 
\begin{align*}
  \bigoplus_{k \geq 0} H^0(\Rss,\sheaf{L}^{k\nu})^G
\end{align*}
for some power $\nu > 0$ such that $\sheaf{L}^\nu$ is globally generated, and declares it the moduli space of slope-semistable sheaves. However, as $\Rss$ is not necessarily a proper scheme, some care has to be taken since this algebra is not a priori of finite type over $\C$. In contrast to \cite{huybrechts2010geometry, greb2017compact}, we follow a different path for proving the projectivity of the moduli space, since we are under the impression that there is a missing step in \cite[Prop.~8.2.6]{huybrechts2010geometry} (see Remark~\ref{remark:HL}). In doing so, we first introduce the notion of $G$-properness (see Definition~\ref{def:Gproper}) for $G$-invariants morphisms of schemes, which generalizes the classical notion of properness when $G$ acts trivially. This holds in particular for $\Rss$ (see Lemma~\ref{lemma:RssGproper}), as a consequence of the valuative criterion of properness due to Langton \cite{langton1975valuative}. We further define what we call the relative $G$-normalization, an equivariant version of the well-known Stein factorization, and then show its existence when $G$ is reductive (see Proposition~\ref{Prop:normalization}). Using this ingredient, we construct \textit{the Iitaka $G$-fibration} corresponding to a $G$-proper scheme endowed with a $G$-equivariant semiample line bundle (see Theorem~\ref{thm:Unique}). Our work generalizes the classical Iitaka fibration to the equivariant case, and may find applications in other places as well.  

Finally, we employ the Iitaka $G$-fibration to construct the moduli space. As a culmination of our work, we obtain the following main result proving the existence and uniqueness of a moduli space parametrizing $\hat{\mu}$-semistable sheaves of class $c$ on $X$. We denote by $\mf^{wn}$ the restriction of $\mf$ to the full subcategory $\schCW$ of weakly normal varieties. 

\begin{thm}
Let $X$ be a smooth projective variety over $\C$ together with a fixed polarization $\sheaf{O}_X(1)$, and choose a numerical Grothendieck class $c \in \grothnum{X}$ of dimension $d > 1$. Then there exists a unique triple $(\Mss, \sheaf{A}, e)$ formed of a weakly normal projective variety $\Mss$ endowed with an ample line bundle $\sheaf{A}$ and a natural number $e > 0$ such that there is a natural transformation $\Psi: \mf^{wn} \to \Hom_{\schCW}(-,\Mss)$, that associates to any weakly normal variety $S$ and any $S$-flat family $\sheaf{E}$ of $\hat{\mu}$-semistable sheaves of class $c$ on $X$ a classifying morphism $\Psi_\sheaf{E}: S \to \Mss$, satisfying the following properties:
    \begin{enumerate}[(1)]
\item For any $S$-flat family $\sheaf{E}$ of $\hat{\mu}$-semistable sheaves of class $c$ on $X$, the classifying morphism $\Psi_\sheaf{E}$ satisfies
\begin{align*}
    \Psi_\sheaf{E}^*(\sheaf{A}) \cong \lambda_\sheaf{E}(w_{l',m'} \cdot h^{d-1})^e,
\end{align*}
where $\lambda_\sheaf{E}(w_{l',m'} \cdot h^{d-1})$ is the determinant line bundle on $S$ defined in Section~\ref{section:LineBundles}.
\item For any other triple $(M',\sheaf{A}',e')$, with $M'$ a projective scheme over $\C$, $\sheaf{A}'$ an ample line bundle on $M'$ and $e'$ a natural number satisfying property (1), we have $e | e'$ and there exists a unique morphism $\phi: \Mss \to M'$ such that $\phi^*\sheaf{A}' \cong \sheaf{A}^{(e'/e)}$.
\end{enumerate}
Moreover, if the class $c$ has dimension $2$, then the same result as above holds true over the category $\schC$ of schemes of finite type over $\C$.
\end{thm}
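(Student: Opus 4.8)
The plan is to realise $\Mss$ as the base of the Iitaka $G$-fibration attached to the parameter scheme of the moduli problem equipped with its natural determinant line bundle, and then to extract $\Psi$ and the uniqueness statement from the universal property of that fibration together with the base-change behaviour of determinant bundles. Concretely, recall from Section~\ref{section:Construction} the locally closed subscheme $\Rss$ of a Quot scheme $\Quot(\sheaf{O}_X(-m)^{\oplus N},c)$, $m\gg 0$, parametrising quotients $q\colon\sheaf{O}_X(-m)^{\oplus N}\twoheadrightarrow E$ with $E$ being $\hat\mu$-semistable of class $c$ and $H^{0}(q(m))$ an isomorphism; boundedness \cite[Thm.~3.3.7]{huybrechts2010geometry} makes it of finite type, it carries the linearised action of the linear group $G$ of Section~\ref{section:Construction} whose orbits are the isomorphism classes of such $E$, and on it lives the $G$-equivariant determinant line bundle $\sheaf{L}=\lambda_{\sheaf{E}_{\Rss}}(w_{l',m'}\cdot h^{d-1})$ of Section~\ref{section:LineBundles}. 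Passing to the weak normalisation $\Rwn$, Theorem~\ref{thm:semiAmpleness} yields $\nu>0$ for which $\sheaf{L}^{\nu}$ is globally generated by $G$-invariant sections, and Lemma~\ref{lemma:RssGproper} shows $\Rwn$ is $G$-proper over $\C$. Feeding $(\Rwn,\sheaf{L}^{\nu})$ into the Iitaka $G$-fibration of Theorem~\ref{thm:Unique} produces a $G$-invariant morphism $\pi\colon\Rwn\to\Mss$ onto a weakly normal projective variety $\Mss$, an ample line bundle $\sheaf{A}$ and an integer $e>0$ with $\pi^{*}\sheaf{A}\cong\sheaf{L}^{e}$, having the defining property that every $G$-invariant morphism from $\Rwn$ to a projective scheme carrying a line bundle pulling back to a positive power of $\sheaf{L}$ factors uniquely through $\pi$. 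This $(\Mss,\sheaf{A},e)$ is the candidate triple.

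To build $\Psi$, take $S$ weakly normal and an $S$-flat family $\sheaf{E}$ of $\hat\mu$-semistable sheaves of class $c$: then $p_{S*}(\sheaf{E}\otimes p_X^{*}\sheaf{O}_X(m))$ is locally free of rank $N$, and trivialising it over an open cover $\{S_i\}$ produces quotients, hence morphisms $f_i\colon S_i\to\Rss$, which factor through $\Rwn$ by the universal property of weak normalisation. On overlaps the framings differ by a $G$-valued cocycle, so the composites $\pi\circ f_i$ — on which $\pi$ kills the $G$-action — agree and glue to $\Psi_{\sheaf{E}}\colon S\to\Mss$, functorial in $S$ by construction. For property (1), the class $w_{l',m'}\cdot h^{d-1}$ is chosen of rank zero precisely so that the characters of $G$ measuring the discrepancy of $\sheaf{L}$ under this cocycle are trivial; hence the line bundles $f_i^{*}\sheaf{L}=\lambda_{\sheaf{E}|_{S_i\times X}}(w_{l',m'}\cdot h^{d-1})$ glue to $\lambda_{\sheaf{E}}(w_{l',m'}\cdot h^{d-1})$ on $S$, and $\Psi_{\sheaf{E}}^{*}\sheaf{A}$, being locally $f_i^{*}\sheaf{L}^{e}$, is globally $\lambda_{\sheaf{E}}(w_{l',m'}\cdot h^{d-1})^{e}$. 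Applied to the tautological family on $\Rwn$ this recovers $\pi=\Psi_{\sheaf{E}_{\Rwn}}$ and $\pi^{*}\sheaf{A}\cong\sheaf{L}^{e}$.

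For uniqueness, let $(M',\sheaf{A}',e')$ be another triple satisfying (1), with natural transformation $\Psi'$. Evaluating $\Psi'$ on the tautological family on $\Rwn$ gives $g\colon\Rwn\to M'$ with $g^{*}\sheaf{A}'\cong\sheaf{L}^{e'}$, and $g$ is $G$-invariant since $\sigma^{*}\sheaf{E}_{\Rwn}\cong\sheaf{E}_{\Rwn}$ for $\sigma\in G$ and $\Psi'$ is natural. As $g^{*}\sheaf{A}'$ is a positive power of $\sheaf{L}$, the universal property of $\pi$ factors $g=\phi\circ\pi$ for a unique $\phi\colon\Mss\to M'$, and the uniqueness clause of Theorem~\ref{thm:Unique} then gives $e\mid e'$ and $\phi^{*}\sheaf{A}'\cong\sheaf{A}^{e'/e}$. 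Applying this in both directions to $(\Mss,\sheaf{A},e)$ itself yields uniqueness of the triple. Lastly, when $c$ has dimension $2$, the complete intersection $C$ appearing in Theorem~\ref{thm:semiAmpleness} is a single general hyperplane section of $X$, chosen independently of the point $[E]$, so the $G$-invariant section of $\sheaf{L}^{\nu}$ produced there is already defined on all of $\Rss$ instead of only off a closed subset of codimension $\geq 2$; the weak normalisation becomes superfluous and the whole argument runs over $\schC$.

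The main obstacle is the passage from $\Rss$ to families over arbitrary weakly normal bases: one must check that the locally defined classifying maps descend through $\pi$ — equivalently, that $\Mss$ corepresents the $G$-quotient of the framed functor on $\schCW$ — and that the particular determinant line bundle $\lambda_{\sheaf{E}}(w_{l',m'}\cdot h^{d-1})$ is simultaneously $G$-invariant (the rank-zero condition) and compatible with base change, since it is precisely this double property that makes both the gluing of $\Psi_{\sheaf{E}}$ and the identification in property (1) work, and likewise underpins the $G$-invariance of $g$ in the uniqueness argument.
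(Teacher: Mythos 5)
Your proposal is correct and follows essentially the same route as the paper: apply the Iitaka $G$-fibration of Theorem~\ref{thm:Unique} to $(\Rss,\sheaf{L})$ using Lemma~\ref{lemma:RssGproper} and Theorem~\ref{thm:semiAmpleness} (resp.\ Theorem~\ref{thm:semiAmpleness2} when $d=2$), obtain classifying morphisms from framings of ${p_1}_*(\sheaf{E}(m))$, verify (1) via the $\chi(c\cdot w_{l',m'}\cdot h^{d-1})=0$ normalization of the determinant bundle, and deduce (2) from the stabilization $M_{ke'}=\Mss$ for $k\gg 0$. The only cosmetic difference is that you glue local trivializations over an open cover of $S$, whereas the paper descends $\varphi\circ\Phi_{\sheaf{E}}$ through the projective frame bundle $\mathcal{S}\to S$ and uses injectivity of $\pi^*$ on Picard groups, which packages the same cocycle bookkeeping globally.
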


\item In the last section we describe the geometric points of the moduli space $\Mss$. Consequently, we generalize several known results in the literature to the case of pure sheaves (see \cite{le1992fibre, li1993algebraic, huybrechts2010geometry, greb2017compact}). For a detailed exposition of the state-of-the-art in the torsion-free case see also \cite[Sect.~2.8]{greb2021HYM}. In particular, we show in Proposition~\ref{prop:compactification} that our moduli space $\Mss$ provides a compactification of the Simpson moduli space of slope-stable reflexive (pure) sheaves. We summarize the separation results in Corollary~\ref{cor:resultsep}.
\end{enumerate}
\end{results}

\begin{outline}
In Preliminaries, we introduce the notion of $\hat{\mu}$-stability and recall some of its basic properties. Here we also define the reflexive hull of a pure sheaf, that will be needed later for describing the geometric points of the moduli space. In Section~\ref{section:RestrictionThms}, we prove our restriction theorems for slope-semistability in the pure case, by following a strategy developed by Langer. In Section~\ref{section:LineBundles}, we introduce a class of determinant line bundles which correspond functorially to any $S$-flat family of slope-semistable sheaves and show their semiampleness, as stated by Theorem~\ref{thm:semiAmpleness}. Similarly to the case studied in \cite{greb2017compact}, in order to prove the semiampleness result in higher dimensions we need to work under weak normality assumptions on schemes. In Section~\ref{sect:Gproperness} we generalize the Stein factorization and the Iitaka fibration to the equivariant case. For this, we introduce the notion of $G$-properness for $G$-invariant morphisms and describe some of its basic properties. Among other things, here we also prove a devissage lemma for equivariant sheaves on $G$-proper schemes. Finally, in Section~\ref{section:Construction} we give the construction of the moduli space of sheaves. We end by studying the geometric points of $\Mss$.
\end{outline}

\begin{aknowledgments}
The author wishes to express his gratitude to his supervisor Matei Toma for his patient guidance and fruitful discussions during the preparation of this paper, which will be part of the author's PhD thesis. The author wishes to thank Adrian Langer for his helpful comments on a preliminary version of this paper.
\end{aknowledgments}
\section{Preliminaries}\label{section:Preliminaries}

In this section we set up notation and terminology. We first introduce the slope-semistability conditions and then recall some known facts about the reflexive hull of a pure sheaf. The main reference here is \cite[Sect.~1]{huybrechts2010geometry}.   

Throughout this paper, let $X$ be a connected smooth projective variety of dimension $n$ over $\C$, and fix $\sheaf{O}_X(1)$ a very ample line bundle on $X$. 

Let $E$ be a coherent sheaf of pure dimension $d$ on $X$, i.e. $\dim(F) = d$ for all non-zero subsheaves $F \subset E$. The Hilbert polynomial of $E$ is given by $P(E,m) = h^0(X,E(m))$ for $m \gg 0$, and has the form
\begin{align*}
    P(E,m) = \sum_{i=0}^{d} \alpha_i(E)\frac{m^i}{i!},
\end{align*}
with rational coefficients $\alpha_i(E)$. We call $r(E) \coloneqq \alpha_d(E)$ the \textit{multiplicity} of $E$.

If $E$ is pure of dimension $d = \dim(X)$, also called \textit{torsion-free}, define its \textit{rank} by $\rk(E)\coloneqq \alpha_d(E)/\alpha_d(\sheaf{O}_X)$ and its $\mu$-\textit{slope} by $\mu(E) \coloneqq c_1(E)H^{d-1}/\rk(E)$ for some divisor $H \in |\sheaf{O}_X(1)|$. Recall that $E$ is called $\mu$-\textit{semistable} (resp.~$\mu$-\textit{stable}) if $\mu(F) \leq \mu(E)$ (resp.~$<$) for all subsheaves $F \subset E$ of rank $0 < \rk(F) < \rk(E)$.

A simple computation using the Hirzebruch-Riemann-Roch formula yields
\begin{align*}
    c_1(E)H^{d-1} = \alpha_{d-1}(E)-\rk(E)\alpha_{d-1}(\sheaf{O}_X).
\end{align*}
Hence one may also use the ratio $\hat{\mu}(E) \coloneqq \alpha_{d-1}(E)/\alpha_d(E)$, called the $\hat{\mu}$-\textit{slope} of $E$, to check the slope-semistability of $E$. Indeed, as we have
\begin{align*}
  \mu(E) = \alpha_d(\sheaf{O}_X)\hat{\mu}(E)-\alpha_{d-1}(\sheaf{O}_X).
\end{align*}
This motivates the following definition of slope-semistability for pure sheaves:

\begin{definition}
A coherent sheaf $E$ of dimension $d$ is called $\hat{\mu}$-\textit{semistable} (resp.~$\hat{\mu}$-\textit{stable}) if $E$ is pure and $\hat{\mu}(F) \leq \hat{\mu}(E)$ (resp.~$<$) for all subsheaves $F \subset E$ of multiplicity $0 < \alpha_d(F) < \alpha_d(E)$.
\end{definition}

If $E$ is pure, then $E$ has a unique Harder-Narasimhan filtration
\begin{align*}
    0 = E_0 \subset E_1 \subset \ldots \subset E_l = E
\end{align*}
such that its factors $E_i/E_{i-1}$ are $\hat{\mu}$-semistable and
\begin{align*}
    \hat{\mu}(E_1) > \hat{\mu}(E_2/E_1) > \ldots > \hat{\mu}(E/E_{l-1}).
\end{align*}
We denote $\hat{\mu}_{\max}(E) \coloneqq \hat{\mu}(E_1)$ and $\hat{\mu}_{\min}(E) \coloneqq \hat{\mu}(E/E_{l-1})$. If $E$ is already $\hat{\mu}$-semistable, then $E$ has a Jordan-H\"{o}lder filtration
\begin{align*}
    0 = E_0 \subset E_1 \subset \ldots \subset E_l = E
\end{align*}
such that its factors $E_i/E_{i-1}$ satisfy $\hat{\mu}(E_i/E_{i-1}) = \hat{\mu}(E)$. In this case the filtration is not unique, but nevertheless its graded sheaf $\gr_{JH}(E) \coloneqq \bigoplus_i E_i/E_{i-1}$ is so in codimension one, i.e. outside a closed subset of codimension $\geq 2$ in $\Supp(E)$.

\subsection{Reflexive hull of a sheaf}\label{subsect:reflHull}

Let $E$ be a pure sheaf on $X$. Recall that $E$ satisfies property $S_2$ if
\begin{align*}
  \depth_x(E) \geq \min\{2, \codim(x,\Supp(E))\} \text{ for all }x \in \Supp(E). 
\end{align*}
If $E$ is torsion-free, then $E$ has a unique reflexive hull $E^{\vee \vee} \coloneqq \sheafHom(\sheafHom(E,\sheaf{O}_X),\sheaf{O}_X)$ that is also torsion-free and satisfies $S_2$. If $E$ is pure of dimension $d$, we set
\begin{align*}
 E^H \coloneqq \Exts^{n-d}(\Exts^{n-d}(E,\omega_X),\omega_X),
\end{align*}
where $\omega_X$ is the dualizing line bundle on $X$. We call $E^H$ the \textit{reflexive hull} of $E$. Then there is a natural inclusion $E \to E^H$, cf. \cite[Prop.~1.1.10]{huybrechts2010geometry}, such that $E^H/E$ has codimension $\geq 2$ in $\Supp(E)$. Moreover $E^H$ is pure and satisfies $S_2$. 

Notice that we recover the classical notion for $d=n$. Also, this definition agrees with the more general one given by Koll{\'a}r in \cite[Sect.~9]{kollar2017families}. For a throughout study of reflexive hulls we refer the reader to his book.

\section{Restriction theorems}\label{section:RestrictionThms}

In this section we prove restriction theorems for $\hat{\mu}$-semistable pure sheaves. When the base scheme is smooth, it is already known that slope-semistability or slope-stability is preserved by restriction to a general divisor of sufficiently large degree. The first general results of this kind were proven by Mehta and Ramanathan in \cite{mehta1982semistable, mehta1984restriction}. However their restriction theorems are not effective, as they provide no bound on the degree of the divisor. Such an effective restriction theorem, but only for slope-semistability, was later given by Flenner \cite{flenner1984restrictions} in zero characteristic. Finally, Langer \cite{langer2004semistable} was able to prove stronger effective restriction theorems that also work in positive characteristic. Here we generalize his method to the case of pure sheaves supported in positive codimension.

We end this section by studying the preservation of flatness for families of sheaves under restriction to divisors.

\subsection{Restriction theorems for $\hat{\mu}$-semistability}

Let $E$ be a purely $d$-dimensional sheaf on $X$. Denote its support by $Y$ and let $\sheaf{O}_Y(1)$ be the pullback of $\sheaf{O}_X(1)$ to $Y$. One can find a linear subspace $W \subset H^0(Y, \sheaf{O}_Y(1))$ of dimension $d+1$ that gives a surjective finite morphism $\pi: Y \to \CP(W)$ with $\pi^*(\sheaf{O}(1))=\sheaf{O}_Y(1)$. Here $\sheaf{O}(1)$ is the canonical line bundle corresponding to a hyperplane $H \in \CP(W^\vee)$. Since $\pi_*$ is exact, $E$ and $\pi_* E$ have the same Hilbert polynomial. Thus, $\alpha_d(E) = \rk(\pi_* E)$ and
\begin{align*}
    \hat{\mu}(E) = \mu(\pi_* E) + \hat{\mu}(\sheaf{O}_{\CP(W)}) = \mu(\pi_* E) + (d+1)/2,
\end{align*}
where the slope $\mu(\pi_* E)$ is computed with respect to $H$. Note that semistability is not necessarily preserved by pushforward, but we have the following inequalities due to Langer \cite[Lem.~6.2.2]{langer2006moduli}.
\begin{lemma}\label{lemma:Langerpushforward}
Under the above notation, we have
\begin{enumerate}[nolistsep]
    \item $\mu_{\max}(\pi_* E) - \mu(\pi_* E) \leq \hat{\mu}_{\max}(E) - \hat{\mu}(E) + \rk(\pi_* E)^2$,
    \item $\mu(\pi_* E) - \mu_{\min}(\pi_* E) \leq \hat{\mu}(E) - \hat{\mu}_{\min}(E) + \rk(\pi_* E)^2$.
\end{enumerate}
\end{lemma}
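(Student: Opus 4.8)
The plan is to recast both estimates in terms of the $\mu$-invariants of pushforwards and then induct on the multiplicity $r\coloneqq\alpha_d(E)=\rk(\pi_*E)$. The identity $\hat{\mu}(E)=\mu(\pi_*E)+(d+1)/2$ derived above holds verbatim for every nonzero pure $d$-dimensional subsheaf or quotient sheaf $F$ of $E$, since the support of such an $F$ is again purely $d$-dimensional (so $\pi$ restricts to it as a finite surjection onto $\CP(W)$), $\pi_*$ stays exact, and $\pi_*F$ is torsion-free on $\CP(W)$. Hence inequality (1) is equivalent to $\mu_{\max}(\pi_*E)\le\hat{\mu}_{\max}(E)-(d+1)/2+r^2$, and (2) to the mirror bound for $\mu_{\min}$; I shall argue (1), the proof of (2) being identical with ``quotient'' replacing ``subsheaf'' throughout (or, formally, via Grothendieck--Serre duality along $\pi$).

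If $E$ fails to be $\hat{\mu}$-semistable, let $0\ne E_1\subsetneq E$ be the first step of its Harder--Narasimhan filtration. Then $\alpha_d(E_1)<r$ and $\alpha_d(E/E_1)<r$, and since $\pi_*$ is exact we get $\mu_{\max}(\pi_*E)\le\max\{\mu_{\max}(\pi_*E_1),\mu_{\max}(\pi_*(E/E_1))\}$; the inductive hypothesis applied to $E_1$ (using $\hat{\mu}(E_1)=\hat{\mu}_{\max}(E)$) and to $E/E_1$ (using $\hat{\mu}_{\max}(E/E_1)<\hat{\mu}_{\max}(E)$) then gives (1). This reduces the problem to the case where $E$ is $\hat{\mu}$-semistable, for which it remains to prove $\mu_{\max}(\pi_*E)-\mu(\pi_*E)\le r^2$.

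Assume then that $E$ is $\hat{\mu}$-semistable; if $\pi_*E$ is $\mu$-semistable there is nothing to do, so let $G\subseteq\pi_*E$ be its maximal destabilizing subsheaf, a $\mu$-semistable sheaf of rank $<r$ with $\mu(G)=\mu_{\max}(\pi_*E)$. By adjunction the inclusion $G\hookrightarrow\pi_*E$ corresponds to a morphism $\pi^*G\to E$; let $F\subseteq E$ be its image. Using the triangle identities of the adjunction $\pi^*\dashv\pi_*$ and the exactness of $\pi_*$ one checks that $G\to\pi_*\pi^*G\to\pi_*F$ is injective, so $G\subseteq\pi_*F\subseteq\pi_*E$ and in particular $\mu_{\max}(\pi_*E)=\mu(G)$. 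On the other hand $F$ is a quotient of $\pi^*G$, and over $\C$ the pullback of the $\mu$-semistable $G$ along the finite morphism $\pi$ is $\hat{\mu}$-semistable — after the harmless reductions of replacing $E$ by its reflexive hull and $Y$ by its normalization, as in \cite{langer2004semistable} — so $\hat{\mu}(F)\ge\hat{\mu}_{\min}(F)\ge\hat{\mu}(\pi^*G)$. The projection formula $\pi_*\pi^*G\cong G\otimes\pi_*\sheaf{O}_Y$ yields $\hat{\mu}(\pi^*G)=\mu(G)+\hat{\mu}(\sheaf{O}_Y)$, and together with $\hat{\mu}(F)\le\hat{\mu}(E)$ (from $F\subseteq E$ and the semistability of $E$) this gives
\[
 \mu_{\max}(\pi_*E)=\mu(G)\le\hat{\mu}(F)-\hat{\mu}(\sheaf{O}_Y)\le\hat{\mu}(E)-\hat{\mu}(\sheaf{O}_Y)=\mu(\pi_*E)+\tfrac{d+1}{2}-\hat{\mu}(\sheaf{O}_Y).
\]

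It remains to bound the correction term $(d+1)/2-\hat{\mu}(\sheaf{O}_Y)=(d+1)/2-\alpha_{d-1}(\sheaf{O}_Y)/\deg Y$ — a non-negative quantity measuring the ramification of the linear projection $\pi$ — by $r^2$. Since $r=\alpha_d(E)\ge\deg Y$ (each component of $Y$ carrying $E$ with generic rank at least $1$), this amounts to the very crude lower bound $\alpha_{d-1}(\sheaf{O}_Y)\ge\tfrac{d+1}{2}\deg Y-(\deg Y)^3$, which follows from Castelnuovo--Mumford regularity bounds for the degree-$\deg Y$ subvariety $Y$ of projective space. This estimate — together with the verification that the reductions to normal, irreducible support are indeed harmless — is the step I expect to require the most care, and it is here that the argument follows Langer closely; granting it, the displayed chain gives $\mu_{\max}(\pi_*E)-\mu(\pi_*E)\le r^2$, completing the semistable case and hence (1), while (2) follows symmetrically.
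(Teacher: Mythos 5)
First, note that the paper offers no proof of this lemma at all: it is quoted from Langer \cite[Lem.~6.2.2]{langer2006moduli}, so there is no in-paper argument to compare yours against. Your skeleton --- reduce to the $\hat{\mu}$-semistable case by induction along the Harder--Narasimhan filtration, then transport the maximal destabilizing subsheaf $G\subseteq\pi_*E$ across the adjunction $\Hom(G,\pi_*E)=\Hom(\pi^*G,E)$ --- is the right one and consistent with Langer's method, and the inductive reduction is correct as written.

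However, the two steps that actually produce the constant $\rk(\pi_*E)^2$ are asserted rather than proved, and they are where the content of the lemma lies. (a) The claim that $\pi^*G$ is $\hat{\mu}$-semistable on $Y$: the support $Y$ is an arbitrary reduced pure-dimensional projective scheme, possibly reducible and non-normal, and the standard characteristic-zero statement that pullback along a finite surjection preserves slope-semistability requires normal (or smooth) source and target. Your proposed fix --- normalize $Y$ --- is not harmless: normalization modifies $\sheaf{O}_Y$ along the conductor, which can be a divisor, so it changes $\hat{\mu}$ of subsheaves of $E$ by codimension-one corrections that the reflexive-hull replacement (a codimension-$\geq 2$ modification) does not repair; moreover $\pi^*G$ can acquire torsion on $Y$, so one must also check that $F$ is a quotient of $\pi^*G/\mathrm{tors}$. (b) The terminal bound $(d+1)/2-\hat{\mu}(\sheaf{O}_Y)=-\mu(\pi_*\sheaf{O}_Y)\leq r^2$ is reduced to a Castelnuovo-type lower bound on $\alpha_{d-1}(\sheaf{O}_Y)$ that you attribute to ``Castelnuovo--Mumford regularity bounds'' with no argument; for an arbitrary reduced equidimensional $Y$ (a subscheme of $X$, not of $\CP(W)$) this is a genuine geometric input, not a routine consequence. (c) Part (2) is not ``identical with quotient replacing subsheaf'': the adjunction $\pi^*\dashv\pi_*$ only converts maps into $\pi_*E$, so the quotient case needs Grothendieck duality for the finite morphism $\pi$ or a separate device. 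A cleaner route that removes (a) and (c) simultaneously is to stay on $\CP(W)$ and use the sheaf of algebras $A=\pi_*\sheaf{O}_Y$: subsheaves of $E$ correspond exactly to $A$-submodules of $\pi_*E$, the $A$-submodule $A\cdot G=\pi_*F$ generated by $G$ is a quotient of $A\otimes G$, whence $\hat{\mu}_{\max}(E)-\tfrac{d+1}{2}\geq\mu(\pi_*F)\geq\mu(G)+\mu_{\min}(A)$ with no pullback to the singular $Y$; but even then one is left needing a lower bound for $\mu_{\min}(\pi_*\sheaf{O}_Y)$, i.e.\ essentially the same unproved input as in (b).
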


For $a > 0$, denote by $\Ha_a = \CP(S^a(W^\vee))$ the linear system of hypersurfaces of degree $a$ in $\CP(W)$. For $D \in \Ha_a$, the scheme-theoretic inverse image $\pi^{-1}(D)$ is an effective divisor on $Y$, which is $E$-regular and moreover satisfies $\pi_*(E|_{\pi^{-1}(D)}) = \pi_*(E)|_D$. If the restriction $E|_{\pi^{-1}(D)}$ is pure, one can easily check that
\begin{align}\label{eq:compareSlopes}
    \hat{\mu}(E|_{\pi^{-1}(D)}) = \hat{\mu}(\pi_*(E)|_D) = \mu(\pi_*(E)|_D)/a + \hat{\mu}(\sheaf{O}_D).
\end{align}
Note that since $\pi$ is finite, $E|_{\pi^{-1}(D)}$ is pure if and only if $\pi_*(E)|_D$ is pure. 

\begin{lemma}
The restriction $E|_{\pi^{-1}(D)}$ is pure for a general divisor $D \in \Ha_a$.
\end{lemma}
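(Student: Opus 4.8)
The plan is to exhibit $E|_{\pi^{-1}(D)}$, for general $D$, as a nonzero subsheaf of a \emph{pure} sheaf of dimension $d-1$; since a subsheaf of a pure sheaf of dimension $d-1$ is again pure of dimension $d-1$ (or zero), this suffices. The pure sheaf I will use is the restriction of the reflexive hull $E^H$ of Subsection~\ref{subsect:reflHull}. Recall that $E$ embeds into $E^H$, that $E^H$ is pure of dimension $d$ and satisfies $S_2$, and that $Q \coloneqq E^H/E$ is coherent with $\dim \Supp(Q) \le d-2$.

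Write $Y = \Supp(E) = \Supp(E^H)$. Because $E$ is pure, $Y$ is equidimensional of dimension $d$, and since $\pi\colon Y \to \CP(W)$ is finite and surjective with $\dim \CP(W) = d$, every irreducible component of $Y$ dominates $\CP(W)$; hence for each $D \in \Ha_a$ the section $\pi^{*} s_D$ cutting out $\pi^{-1}(D)$ is nonzero on every component of $Y$. It follows that, for \emph{every} $D \in \Ha_a$, the divisor $\pi^{-1}(D)$ contains no component of $Y$ and is therefore equidimensional of dimension $d-1$, and moreover $\pi^{-1}(D)$ is both $E$-regular and $E^H$-regular, giving short exact sequences
\[
  0 \to E(-a) \xrightarrow{\ \pi^{*} s_D\ } E \to E|_{\pi^{-1}(D)} \to 0, \qquad
  0 \to E^H(-a) \xrightarrow{\ \pi^{*} s_D\ } E^H \to E^H|_{\pi^{-1}(D)} \to 0,
\]
where $(-a)$ denotes the twist by $\sheaf{O}_Y(-a) \cong \sheaf{O}_Y(-\pi^{-1}(D))$.

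I would then argue in two steps. First, $E^H|_{\pi^{-1}(D)}$ is pure of dimension $d-1$ for every $D \in \Ha_a$: its support is all of the equidimensional scheme $\pi^{-1}(D)$, so it is enough to rule out associated points of dimension $<d-1$, i.e. to check $S_1$. If $x \in \pi^{-1}(D)$ is not a generic point of $\pi^{-1}(D)$ then $\codim(x, Y) \ge 2$, so $\depth_x(E^H) \ge 2$ by the $S_2$ property; since $\pi^{*}s_D \in \m_x$ is a non-zero-divisor on $E^H$, the usual depth formula gives $\depth_x\!\big(E^H|_{\pi^{-1}(D)}\big) = \depth_x(E^H) - 1 \ge 1$, as needed. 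Second, for a \emph{general} $D$ the canonical map $E|_{\pi^{-1}(D)} \to E^H|_{\pi^{-1}(D)}$ is injective: applying the snake lemma to the morphism between the two sequences above induced by $E \hookrightarrow E^H$ identifies its kernel with $\Ker\!\big(\pi^{*} s_D\colon Q(-a) \to Q\big)$, which vanishes as soon as $\pi^{*}s_D$ is a non-zero-divisor on $Q$. The sheaf $Q$ has only finitely many associated points $\xi_1,\dots,\xi_k$, and $\pi^{*}s_D$ fails to be $Q$-regular only if $s_D$ vanishes on one of the proper closed subvarieties $V_i \coloneqq \pi(\overline{\{\xi_i\}}) \subsetneq \CP(W)$; as $\sheaf{O}_{\CP(W)}(a)$ is very ample, each $\{\,D \in \Ha_a : s_D|_{V_i} = 0\,\}$ is a proper linear subspace of $\Ha_a$, so a general $D$ avoids all of them. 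For such $D$, $E|_{\pi^{-1}(D)}$ (which is nonzero, being supported on all of $\pi^{-1}(D)$) is a subsheaf of the pure sheaf $E^H|_{\pi^{-1}(D)}$ of dimension $d-1$, hence pure of dimension $d-1$.

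The point I expect to demand the most care is the first step: deducing purity of $E^H|_{\pi^{-1}(D)}$ from the $S_2$ property of $E^H$ hinges on $\pi^{-1}(D)$ being genuinely equidimensional of dimension $d-1$ — without this, the depth estimate gives only $S_1$ with respect to a support that might not be equidimensional, which does not imply purity. Establishing this equidimensionality is precisely where the finiteness and surjectivity of $\pi$, together with the purity of $E$, are used. Everything else — in particular the Bertini-type avoidance of the finitely many associated points of $Q$ — is routine given the very ampleness of $\sheaf{O}_{\CP(W)}(a)$.
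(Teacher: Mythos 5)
Your argument is correct, but it takes a different route from the paper. The paper disposes of the lemma in one line by citing \cite[Cor.~1.1.14~ii)]{huybrechts2010geometry}: if $\pi^{-1}(D)$ avoids the (finitely many) associated points of $E$ and of the sheaves $\Exts^q_X(E,\omega_X)$ for all $q$, then $E|_{\pi^{-1}(D)}$ is pure, and such a $D$ exists because the linear system is base-point-free. You instead unwind this through the reflexive hull: you show $E^H|_{\pi^{-1}(D)}$ is pure for \emph{every} $D$ (via $S_2$ plus the depth-drop formula, with the equidimensionality of $\pi^{-1}(D)$ correctly identified as the point where finiteness and surjectivity of $\pi$ enter), and then embed $E|_{\pi^{-1}(D)}$ into it for general $D$ by a snake-lemma computation that identifies the kernel with $\Ker(\pi^*s_D\colon Q(-a)\to Q)$, $Q=E^H/E$, which vanishes once $D$ avoids the finitely many associated points of $Q$. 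The two arguments are close cousins --- the associated points of $Q$ are governed by the same $\Exts^q$ sheaves appearing in the Huybrechts--Lehn criterion, so the loci being avoided essentially coincide --- but yours is self-contained modulo the properties of $E^H$ already recorded in Section~\ref{subsect:reflHull} (purity, $S_2$, and $\codim\Supp(E^H/E)\ge 2$), whereas the paper's buys brevity at the cost of an external citation. One cosmetic remark: the condition for $\pi^*s_D$ to fail to be $Q$-regular at an associated point $\xi_i$ is that $D$ contains the single point $\pi(\xi_i)$ (equivalently $D\supseteq V_i$ since $\pi(\xi_i)$ is the generic point of $V_i$), so global generation of $\sheaf{O}_{\CP(W)}(a)$ already suffices for the avoidance; very ampleness is not needed there.
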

\begin{proof}
By \cite[Cor.~1.1.14 ii)]{huybrechts2010geometry}, if $\pi^{-1}(D)$ contains none of the associated points of $E$ and  $\Exts_{X}^q(E,\omega_X)$ for all $q \geq 0$, then $E|_{\pi^{-1}(D)}$ is pure. Obviously we can choose such a divisor since $W$ is base-point-free.
\end{proof}

In what follows, for any torsion-free sheaf $F$ on $\CP(W)$ the \textit{discriminant} of $F$ is defined as usual by
\begin{align*}
    \Delta(F) = 2 \rk(F) c_2(F) - (\rk(F)-1)c_1(F)^2.
\end{align*}

\begin{lemma}\label{lemma:DeltaG}
Let $G$ be a pure sheaf of dimension $d$ and multiplicity $r \coloneqq \rk(\pi_* G)$ on $Y$. Then
\begin{align*}
    \Delta(\pi_* G)H^{d-2} + r^2(\hat{\mu}_{\max}(G) - \hat{\mu}(G))(\hat{\mu}(G) - \hat{\mu}_{\min}(G)) \geq -r^6.
\end{align*}
\end{lemma}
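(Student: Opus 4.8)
**Proof proposal for Lemma 3.4 (the \(\Delta(\pi_*G)\) bound).**

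The plan is to reduce the statement, via the finite pushforward \(\pi\), to the analogous Bogomolov-type inequality for torsion-free sheaves on \(\CP(W)\), which is exactly the content of Langer's estimates on smooth projective varieties. First I would recall that \(\CP(W)\) is a smooth projective variety of dimension \(d\) with polarization \(H\), and that \(\pi_*G\) is a torsion-free sheaf on \(\CP(W)\) of rank \(r\), so the discriminant \(\Delta(\pi_*G)\) is defined and the quantity \(\Delta(\pi_*G)H^{d-2}\) makes sense. The key external input is Langer's refined Bogomolov inequality (as in \cite{langer2004semistable}, see also \cite[Thm.~3.4.1]{huybrechts2010geometry} for the char.\ 0 surface-type statement adapted to a generic complete-intersection curve): for any torsion-free sheaf \(F\) of rank \(r\) on a smooth polarized projective variety one has a bound of the shape
\[
\Delta(F)H^{d-2} \;\geq\; -\,\frac{r^2}{?}\bigl(\mu_{\max}(F)-\mu(F)\bigr)\bigl(\mu(F)-\mu_{\min}(F)\bigr)\;-\;(\text{a constant depending only on }r),
\]
after one passes to the associated graded of the Harder–Narasimhan filtration and uses that each HN-factor is \(\mu\)-semistable, hence satisfies the Bogomolov inequality \(\Delta(\cdot)H^{d-2}\geq 0\) on a smooth base over \(\C\). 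Summing the discriminants over the HN-factors and bounding the cross terms \(c_1(F_i)c_1(F_j)H^{d-2}\) by the Hodge index theorem in terms of the slopes produces precisely a lower bound of the above form.

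Second, I would feed into this the comparison between \(\mu_{\max},\mu_{\min}\) of \(\pi_*G\) and \(\hat\mu_{\max},\hat\mu_{\min}\) of \(G\) supplied by Lemma~\ref{lemma:Langerpushforward}: writing \(\delta^+ \coloneqq \hat\mu_{\max}(G)-\hat\mu(G)\) and \(\delta^- \coloneqq \hat\mu(G)-\hat\mu_{\min}(G)\), parts (1) and (2) of that lemma give
\[
\mu_{\max}(\pi_*G)-\mu(\pi_*G)\leq \delta^+ + r^2,\qquad \mu(\pi_*G)-\mu_{\min}(\pi_*G)\leq \delta^- + r^2 .
\]
Plugging these into the Bogomolov bound for \(F=\pi_*G\), one obtains
\[
\Delta(\pi_*G)H^{d-2}\;\geq\; -\,C_1\,r^2\,(\delta^+ + r^2)(\delta^- + r^2)\;-\;C_2(r)
\]
for absolute constants \(C_1\) and a polynomial \(C_2(r)\). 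Expanding \((\delta^+ + r^2)(\delta^- + r^2) = \delta^+\delta^- + r^2(\delta^++\delta^-) + r^4\), the main term is \(r^2\delta^+\delta^-\), and the remaining terms are of order \(r^4(\delta^++\delta^-)\) and \(r^6\); moving the \(r^2\delta^+\delta^-\) contribution to the left-hand side and absorbing everything else into \(-r^6\) (for \(r\geq 1\), after checking the combinatorial constants work out) yields exactly
\[
\Delta(\pi_*G)H^{d-2} + r^2\,\delta^+\delta^- \;\geq\; -\,r^6 .
\]
This is the claimed inequality.

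The step I expect to be the main obstacle is getting the constants to collapse cleanly into the single clean bound \(-r^6\): one has to be a little careful about (i) which normalization of the discriminant and which exact form of Langer's inequality one invokes — in characteristic zero the HN-factors are \(\mu\)-semistable and satisfy the honest Bogomolov inequality with no \(p\)-correction, which simplifies things, but the cross-term estimates from the Hodge index theorem still carry numerical factors; and (ii) the fact that \(\pi_*G\) need not be \(\mu\)-semistable even when \(G\) is, so one genuinely must run the HN-filtration argument for \(\pi_*G\) rather than quote a semistable Bogomolov inequality directly. A secondary point to verify is that \(\Delta(\pi_*G)H^{d-2}\) only depends on \(\pi_*G\) up to codimension-two modifications, so one may replace \(\pi_*G\) by its reflexive hull (or by the double dual of its HN-factors) without changing the left-hand side, which legitimizes applying the Hodge index / Bogomolov machinery to reflexive sheaves where it is cleanest. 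Once these bookkeeping issues are handled, the estimate is a direct substitution as above.
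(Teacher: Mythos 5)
There is a genuine gap in the final ``absorption'' step, and it traces back to applying the Harder--Narasimhan machinery on the wrong side of $\pi$. Your plan is to invoke Langer's inequality directly for $F=\pi_*G$, namely $\Delta(F)H^{d-2} \geq -r^2\bigl(\mu_{\max}(F)-\mu(F)\bigr)\bigl(\mu(F)-\mu_{\min}(F)\bigr)$, and then substitute the upper bounds $\mu_{\max}(F)-\mu(F)\leq \delta^++r^2$ and $\mu(F)-\mu_{\min}(F)\leq \delta^-+r^2$ from Lemma~\ref{lemma:Langerpushforward}. Expanding, this yields
\begin{align*}
\Delta(\pi_*G)H^{d-2} + r^2\delta^+\delta^- \;\geq\; -\,r^4(\delta^++\delta^-)\;-\;r^6,
\end{align*}
and the term $-r^4(\delta^++\delta^-)$ cannot be absorbed into $-r^6$: the quantities $\delta^\pm$ depend on $G$ and are unbounded, so no constant depending only on $r$ dominates them. (In the application to Theorem~\ref{thm:StableRestriction}, $\delta^\pm$ grow linearly in $a$, so this stray term is not even harmless there; it would degrade the effective bound.) The lemma as stated therefore does not follow from your argument.

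The fix, which is how the paper proceeds, is to take the Harder--Narasimhan filtration of $G$ on $Y$ and push it forward, setting $F_i=\pi_*(G_i/G_{i-1})$. Because $\hat\mu(G')=\mu(\pi_*G')+(d+1)/2$ for every pure sheaf $G'$ on $Y$, the slope \emph{differences} $\mu(F_i)-\mu(\pi_*G)$ coincide exactly with $\hat\mu(G_i/G_{i-1})-\hat\mu(G)$ --- no $+r^2$ error at this stage. The standard discriminant decomposition along the filtration, the Hodge index theorem, and Langer's Lemma~1.4 then give $\Delta(\pi_*G)H^{d-2}+r^2\delta^+\delta^- \geq \sum (r/r_i)\Delta(F_i)H^{d-2}$ with the correction term appearing \emph{exactly} as in the statement. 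The $r_i^2$ errors from Lemma~\ref{lemma:Langerpushforward} only enter afterwards, when bounding each $\Delta(F_i)H^{d-2}$: since $G_i/G_{i-1}$ is $\hat\mu$-semistable, its own slope spread vanishes, so $\mu_{\max}(F_i)-\mu(F_i)\leq r_i^2$ and $\mu(F_i)-\mu_{\min}(F_i)\leq r_i^2$, whence Langer's Theorem~5.1 gives $\Delta(F_i)H^{d-2}\geq -r_i^6$ and $\sum(r/r_i)\Delta(F_i)H^{d-2}\geq -r^6$. In short: the $r^2$-corrections must be confined to the individual semistable factors (where they multiply a bounded quantity), not applied to the total HN spread of $\pi_*G$ (where they multiply an unbounded one).
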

\begin{proof}
We first introduce some notation. Let $0 = G_0 \subset G_1 \subset \ldots \subset G_l = G$ be the Harder-Narasimhan filtration of $G$. Since $\pi_*$ is exact, $0 = \pi_* G_0 \subset \pi_* G_1 \subset \ldots \subset \pi_* G_l = \pi_* G$ remains a filtration. Set $F_i \coloneqq \pi_* (G_i/G_{i-1})$, $r_i \coloneqq \rk(F_i)$, $\mu_i \coloneqq \mu(F_i)$, $\mu \coloneqq \mu(\pi_* G)$. 

A direct computation using the Hodge Index Theorem gives 
\begin{align*}
    \frac{\Delta(\pi_* G)H^{d-2}}{r} = &\sum \frac{\Delta(F_i)H^{d-2}}{r_i} \\
    &-\frac{1}{r}\sum_{i < j} r_i r_j\left( \frac{c_1(F_i)}{r_i} -  \frac{c_1(F_j)}{r_j}\right)^2H^{d-2}\\
    \geq &\sum \frac{\Delta(F_i)H^{d-2}}{r_i} - \frac{1}{r}\sum_{i < j} r_i r_j( \mu_i -  \mu_j)^2.
\end{align*}
By using \cite[Lem.~1.4]{langer2004semistable}, we get
\begin{align*}
    \Delta(\pi_* G)H^{d-2} + r^2(\mu_1 - \mu)(\mu - \mu_l) \geq \sum \frac{r}{r_i}\Delta(F_i)H^{d-2},
\end{align*}
or equivalently
\begin{align*}
    \Delta(\pi_* G)H^{d-2} + r^2(\hat{\mu}_{\max}(G) - \hat{\mu}(G))(\hat{\mu}(G) - \hat{\mu}_{\min}(G)) \geq \sum \frac{r}{r_i}\Delta(F_i)H^{d-2}.
\end{align*}
 Now the result follows from the lemma bellow.  
\end{proof}

\begin{lemma}\label{lemma:deltaFi}
Under the above assumptions, we have
\begin{align*}
    \sum \frac{r}{r_i}\Delta(F_i)H^{d-2} \geq -r^6.
\end{align*}
\end{lemma}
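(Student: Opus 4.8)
The plan is to apply, to \emph{each} Harder--Narasimhan factor $F_i = \pi_*(G_i/G_{i-1})$ separately, the same Hodge Index computation used for $\pi_* G$ in the proof of Lemma~\ref{lemma:DeltaG}, but now closing the estimate with the Bogomolov inequality on $\CP(W)$ rather than with a further recursion. The point making this possible is that $F_i$ is \emph{torsion-free} on $\CP(W) \cong \CP^d$: the sheaf $G_i/G_{i-1}$, being a Harder--Narasimhan factor of the pure sheaf $G$, is $\hat{\mu}$-semistable and in particular pure of dimension $d$; as $\pi$ is finite and closed, $\pi(\Supp(G_i/G_{i-1}))$ is a $d$-dimensional closed subset of the irreducible $\CP(W)$, hence equals $\CP(W)$, so $\pi_*(G_i/G_{i-1})$ is pure of dimension $d = \dim \CP(W)$, i.e.\ torsion-free.

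First I would bound the instability of $F_i$: since $G_i/G_{i-1}$ is $\hat{\mu}$-semistable, $\hat{\mu}_{\max}(G_i/G_{i-1}) = \hat{\mu}(G_i/G_{i-1}) = \hat{\mu}_{\min}(G_i/G_{i-1})$, so Lemma~\ref{lemma:Langerpushforward} applied to $G_i/G_{i-1}$ (with $\rk F_i = r_i$) gives $\mu_{\max}(F_i) - \mu(F_i) \leq r_i^2$ and $\mu(F_i) - \mu_{\min}(F_i) \leq r_i^2$. Next, I would run the argument of Lemma~\ref{lemma:DeltaG} with $F_i$ in the role of $\pi_* G$: writing $0 = N_0 \subset N_1 \subset \dots \subset N_s = F_i$ for the Harder--Narasimhan filtration of the torsion-free sheaf $F_i$, with factors of ranks $\rho_k$, the Hodge Index Theorem together with \cite[Lem.~1.4]{langer2004semistable} yields
\begin{align*}
  \Delta(F_i)H^{d-2} + r_i^2\big(\mu_{\max}(F_i) - \mu(F_i)\big)\big(\mu(F_i) - \mu_{\min}(F_i)\big) \;\geq\; \sum_k \frac{r_i}{\rho_k}\,\Delta(N_k/N_{k-1})H^{d-2}.
\end{align*}
Each $N_k/N_{k-1}$ is a $\mu$-semistable torsion-free sheaf on the smooth projective variety $\CP(W)$ over $\C$, so by Bogomolov's inequality in characteristic zero $\Delta(N_k/N_{k-1})H^{d-2} \geq 0$; together with the two slope bounds this gives $\Delta(F_i)H^{d-2} \geq -r_i^6$.

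Summing over $i$ and using $r_i \geq 1$, so that $\sum_i r_i^5 \leq \big(\sum_i r_i\big)^5 = r^5$, we obtain
\begin{align*}
  \sum_i \frac{r}{r_i}\,\Delta(F_i)H^{d-2} \;\geq\; -\,r\sum_i r_i^5 \;\geq\; -\,r^6,
\end{align*}
as claimed. The only steps needing care are the torsion-freeness of $F_i$ on $\CP(W)$ --- which is what makes $\Delta(F_i)$, its Harder--Narasimhan filtration, and the appeal to Bogomolov meaningful --- and checking that Lemma~\ref{lemma:Langerpushforward} applies to the subquotient $G_i/G_{i-1}$, with $\pi$ restricted to its support; neither is a genuine obstacle. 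The real content, and what makes the numerics close up exactly, is the interplay between Langer's pushforward inequalities, which control how far $F_i$ is from $\mu$-semistability, and the Bogomolov inequality applied to the honestly $\mu$-semistable pieces living on projective space.
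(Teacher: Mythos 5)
Your argument is correct and follows essentially the same route as the paper: the paper simply cites Langer's Theorem~5.1 to get $\Delta(F_i)H^{d-2} + r_i^2(\mu_{\max}(F_i)-\mu(F_i))(\mu(F_i)-\mu_{\min}(F_i)) \geq 0$ directly, where you re-derive that inequality in characteristic zero via the Harder--Narasimhan filtration of $F_i$, the Hodge Index Theorem, and Bogomolov's inequality. The remaining steps --- using Lemma~\ref{lemma:Langerpushforward} on the $\hat{\mu}$-semistable factors $G_i/G_{i-1}$ to bound both slope defects by $r_i^2$, and the estimate $\sum r_i^5 \leq (\sum r_i)^5$ --- coincide with the paper's proof.
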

\begin{proof}
Applying \cite[Thm.~5.1]{langer2004semistable} to each $F_i$, we obtain
\begin{align*}
    \Delta(F_i)H^{d-2} + r_i^2(\mu_{\max}(F_i) - \mu(F_i))(\mu(F_i) - \mu_{\min}(F_i)) \geq 0.
\end{align*}
By Lemma~\ref{lemma:Langerpushforward}, we have
\begin{align*}
    \mu_{\max}(F_i) - \mu(F_i) \leq r_i^2
\end{align*}
and
\begin{align*}
    \mu(F_i) - \mu_{\min}(F_i) \leq r_i^2.
\end{align*}
Putting together the above inequalities yields
\begin{align*}
    \sum \frac{r}{r_i}\Delta(F_i)H^{d-2} \geq - r\sum r_i^5 \geq - r\left(\sum r_i\right)^5 \geq -r^6. 
\end{align*}
\end{proof}

We are now ready to prove the announced restriction theorem. We give a proof by contradiction following the idea of Langer \cite[Thm.~5.2]{langer2004semistable}. In this regard, for the reader's convenience, we adopt his notation.

\begin{theorem}\label{thm:StableRestriction}
Suppose that $E$ is $\hat{\mu}$-stable, and let $D \in \Ha_a$ be a normal divisor on $\CP(W)$ such that $E|_{\pi^{-1}(D)}$ is pure. If 
\begin{align*}
    a > \frac{r-1}{r}\Delta(\pi_* E)H^{d-2} + \frac{1}{r(r-1)} + r^5(r-1),
\end{align*}
then the restriction $E|_{\pi^{-1}(D)}$ is $\hat{\mu}$-stable.
\end{theorem}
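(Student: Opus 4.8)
The strategy is a proof by contradiction in the spirit of Langer's Theorem 5.2 in \cite{langer2004semistable}, transported along the finite map $\pi$. Suppose $E|_{\pi^{-1}(D)}$ is \emph{not} $\hat{\mu}$-stable. Then, using the translation \eqref{eq:compareSlopes} between $\hat{\mu}(E|_{\pi^{-1}(D)})$ and $\mu(\pi_*(E)|_D)$, this gives a destabilizing subsheaf of $\pi_*(E)|_D$ on the normal variety $D \subset \CP(W)$. By the usual ``limit of restrictions'' argument — one spreads this out over the (irreducible) family $\Ha_a^{\circ}$ of normal divisors where purity holds, and passes to a flat limit — one produces a saturated subsheaf $G \subsetneq E$ on $Y$ (equivalently a subsheaf of $\pi_* E$ on $\CP(W)$) whose restriction to the general such $D$ destabilizes, together with a numerical inequality of the form
\begin{align*}
  0 \;\le\; \hat{\mu}(G|_{\pi^{-1}(D)}) - \hat{\mu}(E|_{\pi^{-1}(D)})
  \;=\; \frac{\mu(\pi_*G|_D) - \mu(\pi_*E|_D)}{a},
\end{align*}
which, after intersecting the defect of $c_1$ with $H^{d-2}$ on the divisor and dividing by $a$, contradicts the $\hat{\mu}$-stability $\hat{\mu}(G) < \hat{\mu}(E)$ on $Y$ once $a$ is large enough. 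The point of the effective bound is to quantify ``large enough'' using the discriminant estimate.

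Concretely, I would set $\pi_* E =: \mathcal{E}$, a torsion-free sheaf of rank $r = \alpha_d(E)$ on $\CP(W)$, and let $\pi_* G =: \mathcal{G}$ with $\rk \mathcal{G} =: \rho$, $0 < \rho < r$. The restriction $\mathcal{G}|_D \hookrightarrow \mathcal{E}|_D$ violates slope-stability on $D$, so $\xi := r_{\rho}\,\mu(\mathcal{E}|_D) - r\,\mu(\mathcal{G}|_D) \le 0$ where I write $r_\rho$ for the appropriate multiplicities; translating back via \eqref{eq:compareSlopes}, $a \cdot (\text{something} \le 0)$ must be compared with the strictly negative quantity $r\rho(\hat{\mu}(G) - \hat{\mu}(E)) \le -1$ (integrality of multiplicities forces a denominator-controlled gap, as in \cite[Cor.~3.1]{langer2004semistable}). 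The key estimate is to bound the discriminant term: apply Lemma~\ref{lemma:DeltaG} to $G$ to get $\Delta(\pi_* G)H^{d-2} \ge -r^6 - r^2(\hat{\mu}_{\max}(G)-\hat{\mu}(G))(\hat{\mu}(G)-\hat{\mu}_{\min}(G))$, combine with the analogous control on $\mathcal{E} = \pi_* E$ (here $E$ is $\hat{\mu}$-stable so $\hat{\mu}_{\max}(E) = \hat{\mu}(E) = \hat{\mu}_{\min}(E)$ and Lemma~\ref{lemma:DeltaG} gives simply $\Delta(\pi_*E)H^{d-2} \ge -r^6$, though what enters the bound is $\Delta(\pi_*E)H^{d-2}$ itself), and feed these into the Hodge-index / Bogomolov-type inequality on the surface obtained by cutting $\CP(W)$ with $d-2$ general hyperplanes. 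This yields precisely a threshold of the shape
\begin{align*}
  a \;\le\; \frac{r-1}{r}\,\Delta(\pi_* E)H^{d-2} + \frac{1}{r(r-1)} + r^5(r-1),
\end{align*}
contradicting the hypothesis on $a$.

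The main obstacle, and the place where working with pure sheaves supported in positive codimension genuinely differs from Langer's original argument, is controlling the behaviour of the destabilizing subsheaf under the pushforward $\pi_*$: semistability is \emph{not} preserved by $\pi_*$, so one cannot directly quote the surface Bogomolov inequality for $\mathcal{E}$ or $\mathcal{G}$. This is exactly what Lemmas~\ref{lemma:Langerpushforward}, \ref{lemma:DeltaG} and \ref{lemma:deltaFi} are designed to absorb — they convert the instability defect of $\pi_* G$ back into the intrinsic defect on $Y$ (the terms $\hat{\mu}_{\max}(G) - \hat{\mu}_{\min}(G)$) at the cost of the explicit polynomial-in-$r$ error terms that appear in the final bound. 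A secondary technical point is ensuring that the flat limit of saturated subsheaves on the $D$'s is again saturated in $\pi_*E$ (hence corresponds to a genuine subsheaf $G \subset E$ with $G = G^{H} \cap E$-type control on $Y$) and that its restriction to the general $D$ remains the saturation, so that the slope comparison is sharp; this is handled by standard openness/semicontinuity arguments on the Quot scheme over $\Ha_a$, exactly as in \cite[Thm.~5.2]{langer2004semistable}, using that a general $D$ avoids the associated points of all the relevant sheaves.
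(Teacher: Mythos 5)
Your outline diverges from the paper's proof at the decisive step, and the route you substitute does not work for the statement as given. The theorem concerns a single \emph{fixed} normal divisor $D \in \Ha_a$ with $E|_{\pi^{-1}(D)}$ pure, and the paper's argument (following Langer's Thm.~5.2) never varies $D$: it takes a saturated destabilizing subsheaf $S \subset E|_{\pi^{-1}(D)}$ with quotient $T$, forms the \emph{elementary modification} $G \coloneqq \ker(E \to E|_{\pi^{-1}(D)} \to T)$ --- a sheaf of full multiplicity $r$, not a proper subsheaf of $E$ --- computes $\Delta(\pi_*G)H^{d-2} \le \Delta(\pi_*E)H^{d-2} - \rho(r-\rho)a^2$ from the two exact sequences $0 \to G \to E \to T \to 0$ and $0 \to E(-\pi^{-1}(D)) \to G \to S \to 0$ (the cross term has the right sign precisely because $T$ destabilizes), applies Lemma~\ref{lemma:DeltaG} to $G$, and bounds $\hat{\mu}_{\max}(G)-\hat{\mu}(G)$ and $\hat{\mu}(G)-\hat{\mu}_{\min}(G)$ using stability of $E$ and of $E(-\pi^{-1}(D))$ together with the integrality gap $1/r(r-1)$. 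Multiplying out these bounds is what produces the exact threshold. None of this appears in your plan: you never introduce the modification $G$, which is the whole point of Langer's method.

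In its place you invoke a ``spread out over $\Ha_a^{\circ}$ and pass to a flat limit'' step to produce a saturated subsheaf $G \subsetneq E$ on $Y$ whose restriction to a general $D$ destabilizes. This is the Mehta--Ramanathan mechanism, not Langer's, and it cannot deliver the stated result: (i) it says nothing about the given divisor $D$, only about a general one, and it is not effective; (ii) more seriously, the claim itself is unjustified --- a destabilizing subsheaf of $E|_{\pi^{-1}(D)}$ need not lift to a subsheaf of $E$, and if such a $G \subset E$ did exist then $\mu(\pi_*G|_D) \ge \mu(\pi_*E|_D)$ would give $\mu(\pi_*G) \ge \mu(\pi_*E)$ directly (degrees on $D$ are just $a$ times degrees on $\CP(W)$), contradicting stability of $E$ with no lower bound on $a$ whatsoever. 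The fact that your argument would prove the theorem for every $a$ is a sign that the lifting step is where the content has been lost. Your discussion of Lemmas~\ref{lemma:Langerpushforward}--\ref{lemma:deltaFi} correctly identifies why they are needed (pushforward does not preserve semistability), but you apply Lemma~\ref{lemma:DeltaG} to the wrong objects ($\pi_*E$ and a proper subsheaf, rather than the modification $G$), so the computation that actually produces the stated threshold is never carried out.
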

\begin{proof}
Suppose, on contrary, that $E|_{\pi^{-1}(D)}$ is not stable. Let $S$ be a saturated destabilizing subsheaf of $E|_{\pi^{-1}(D)}$, and denote $T \coloneqq (E|_{\pi^{-1}(D)}/S)$. Let $G$ be the kernel of the composition $E \to E|_{\pi^{-1}(D)} \to T$. Then there are two short exact sequences
\begin{align*}
    0 \to G \to E \to T \to 0
\end{align*}
and
\begin{align*}
    0 \to E(-\pi^{-1}(D)) \to G \to S \to 0.
\end{align*}
Set $r \coloneqq \rk(\pi_* G)$ and $\rho \coloneqq \rk(\pi_* S)$. A direct computation gives
\begin{align*}
    \Delta(\pi_* G)H^{d-2} = ~&\Delta(\pi_* E)H^{d-2} - \rho(r-\rho)a^2\\
                         &+ 2(rc_1(\pi_* T) - (r-\rho)Dc_1(\pi_*E))H^{d-2}.
\end{align*}
Since $T$ is a destabilizing quotient of $E|_{\pi^{-1}(D)}$, we have $\hat{\mu}(T) \leq \hat{\mu}(E|_{\pi^{-1}(D)})$, or equivalently $\mu(\pi_* T) \leq \mu(\pi_*(E)|_D)$ by \eqref{eq:compareSlopes}. Then $2(rc_1(\pi_* T) - (r-\rho)Dc_1(\pi_*E))H^{d-2} \leq 0$, and so
\begin{align*}
    \Delta(\pi_* G)H^{d-2} \leq \Delta(\pi_* E)H^{d-2} - \rho(r-\rho)a^2.
\end{align*}
By using Lemma~\ref{lemma:DeltaG}, we deduce
\begin{align}\label{ineq:Langton1}
    \Delta(\pi_* E)H^{d-2} - \rho(r-\rho)a^2 + r^2(\hat{\mu}_{\max}(G) - \hat{\mu}(G))(\hat{\mu}(G) - \hat{\mu}_{\min}(G)) \geq -r^6.
\end{align}
Since $E$ and $E(-\pi^{-1}(D))$ are stable, we obtain
\begin{align}\label{ineq:Langton2}
    \hat{\mu}_{\max}(G) - \hat{\mu}(G) = \hat{\mu}_{\max}(G) - \hat{\mu}(E) + \frac{r-\rho}{r}a \leq \frac{r-\rho}{r}a - \frac{1}{r(r-1)}
\end{align}
and
\begin{align}\label{ineq:Langton3}
    \hat{\mu}(G) - \hat{\mu}_{\min}(G) = \hat{\mu}(E) - \hat{\mu}_{\min}(G) + \frac{\rho}{r}a \leq \frac{\rho}{r}a - \frac{1}{r(r-1)}.
\end{align}
Putting together the inequalities \eqref{ineq:Langton1}--\eqref{ineq:Langton3} yields
\begin{align*}
    \Delta(\pi_* E)H^{d-2} - \rho(r-\rho)a^2 + r^2\left(\frac{r-\rho}{r}a - \frac{1}{r(r-1)}\right)\left(\frac{\rho}{r}a - \frac{1}{r(r-1)}\right) \geq -r^6.
\end{align*}
Thus
\begin{align*}
    \Delta(\pi_* E)H^{d-2} - \frac{r}{r-1}a + \frac{1}{(r-1)^2} \geq -r^6,
\end{align*}
which gives a contradiction.
\end{proof}

Note that the theorem above is effective, i.e.~we can choose $a > 0$ large enough to work for all stable sheaves. Furthermore, essentially the same proof gives also an effective restriction theorem for $\hat{\mu}$-semistable sheaves.

\begin{theorem}\label{thm:RestrictionSemi}
Suppose that $E$ is $\hat{\mu}$-semistable. For $a \gg 0$ the restriction $E|_{\pi^{-1}(D)}$ remains $\hat{\mu}$-semistable for a general divisor $D \in \Ha_a$. 
\end{theorem}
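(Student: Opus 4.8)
I would deduce the semistable case from the stable restriction theorem (Theorem~\ref{thm:StableRestriction}) by passing to a Jordan--H\"older filtration. First choose a filtration $0 = E_0 \subset E_1 \subset \ldots \subset E_l = E$ whose factors $\gr_i \coloneqq E_i/E_{i-1}$ are $\hat{\mu}$-stable with $\hat{\mu}(\gr_i) = \hat{\mu}(E)$ for every $i$; such a filtration exists by refining the Jordan--H\"older filtration of $E$ from Section~\ref{section:Preliminaries} until all factors are stable. Each $\gr_i$ is pure of dimension $d$ with $\Supp(\gr_i) \subseteq Y$, so $\pi$ restricts to a finite surjection $\Supp(\gr_i) \to \CP(W)$ and $\pi_* \gr_i$ is again torsion-free on $\CP(W)$. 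Now fix $a \gg 0$ and a general $D \in \Ha_a$: by the purity lemma above, applied to the finitely many sheaves $E, \gr_1, \ldots, \gr_l$, I may arrange that $\pi^{-1}(D)$ contains no associated point of any of them nor of any of the sheaves $\Exts^q_X(E,\omega_X)$, $\Exts^q_X(\gr_i,\omega_X)$, so that $\pi^{-1}(D)$ is regular for $E$ and for each $\gr_i$, all their restrictions to $\pi^{-1}(D)$ are pure, and $D$ is normal; and by taking $a$ larger than each of the finitely many effective bounds supplied by Theorem~\ref{thm:StableRestriction} for those $\gr_i$ with $\rk(\pi_*\gr_i) \geq 2$ (the rank-one factors restrict to stable sheaves automatically), I may assume that every $\gr_i|_{\pi^{-1}(D)}$ is $\hat{\mu}$-stable.

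\textbf{Carrying it out.} Since $\pi^{-1}(D)$ is $E_i$-regular for all $i$, restriction to $\pi^{-1}(D)$ is exact along the filtration $(E_i)$, so $0 = E_0|_{\pi^{-1}(D)} \subset \ldots \subset E_l|_{\pi^{-1}(D)} = E|_{\pi^{-1}(D)}$ is a filtration with successive quotients $\gr_i|_{\pi^{-1}(D)}$. These quotients all have the same $\hat{\mu}$-slope: since $\pi^{-1}(D) \in |\sheaf{O}_Y(a)|$, for any sheaf $G$ regular with respect to $\pi^{-1}(D)$ the exact sequence $0 \to G(-\pi^{-1}(D)) \to G \to G|_{\pi^{-1}(D)} \to 0$ gives $P(G|_{\pi^{-1}(D)},m) = P(G,m) - P(G,m-a)$, whence comparing the two leading coefficients yields $\hat{\mu}(G|_{\pi^{-1}(D)}) = \hat{\mu}(G) - a/2$; applying this to the $\gr_i$ and to $E$ and using $\hat{\mu}(\gr_i) = \hat{\mu}(E)$ gives $\hat{\mu}(\gr_i|_{\pi^{-1}(D)}) = \hat{\mu}(E) - a/2 = \hat{\mu}(E|_{\pi^{-1}(D)})$ for all $i$. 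Finally, a sheaf admitting a filtration whose successive quotients are $\hat{\mu}$-semistable of a single fixed slope $\mu_0$ is itself $\hat{\mu}$-semistable of slope $\mu_0$: intersecting an arbitrary subsheaf $N$ with the filtration realizes $N$ as an iterated extension of subsheaves of the (slope-$\mu_0$, semistable) quotients, so $\hat{\mu}(N) \leq \mu_0$; moreover an iterated extension of pure $(d-1)$-dimensional sheaves is pure of dimension $d-1$. Hence $E|_{\pi^{-1}(D)}$ is $\hat{\mu}$-semistable, as wanted.

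\textbf{Where the difficulty lies.} There is no single hard step; the content is the bookkeeping of organising one general divisor $D$ of large enough degree that simultaneously meets the regularity, purity, normality and effective-bound conditions for $E$ and all the $\gr_i$, and the observation that restriction shifts $\hat{\mu}$ by the constant $-a/2$ (so equal-slope factors restrict to equal-slope factors). It is worth recording that the argument is actually effective: the number of Jordan--H\"older factors and the quantities $\rk(\pi_*\gr_i)$ and $\Delta(\pi_*\gr_i)H^{d-2}$ entering the bounds of Theorem~\ref{thm:StableRestriction} are controlled by $\rk(\pi_* E)$ and $\Delta(\pi_* E)H^{d-2}$ via the Hodge Index estimate in the proof of Lemma~\ref{lemma:DeltaG} together with Lemma~\ref{lemma:deltaFi}, so one obtains a bound on $a$ depending only on the class $c$. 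One should also note why the contradiction argument of Theorem~\ref{thm:StableRestriction} cannot simply be repeated: in the semistable case the gap terms $\tfrac{1}{r(r-1)}$, which there came from the strict stability of $E$ and of $E(-\pi^{-1}(D))$, degenerate to $0$, the two $a^2$-terms cancel, and the inequality collapses to $\Delta(\pi_* E)H^{d-2} \geq -r^6$, which is no contradiction; this is exactly why reducing to the stable case via Jordan--H\"older is the natural route.
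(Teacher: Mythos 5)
Your proof is correct and follows essentially the same route as the paper, whose proof simply defers to Langer's Corollary~5.4: that argument is precisely the Jordan--H\"older reduction to the effective stable restriction theorem that you carry out, including the uniform control of the discriminants of the stable factors via the Hodge index estimate from Lemma~\ref{lemma:DeltaG} and Lemma~\ref{lemma:deltaFi}. Your closing observation about why the contradiction argument of Theorem~\ref{thm:StableRestriction} cannot be rerun verbatim in the semistable case (the $a^2$-terms cancel and the gap terms $\tfrac{1}{r(r-1)}$ disappear) is accurate and correctly identifies why this reduction is the intended path.
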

\begin{proof}
See the proof in \cite[Cor.~5.4]{langer2004semistable}.
\end{proof}

For $a > 0$, let $\Pi_a \coloneqq |\sheaf{O}_X(a)|$ be the linear system of hypersurfaces of degree $a$ on $X$. By using the openness property of slope-semistable sheaves in flat families, one can further show:

\begin{cor}
Suppose that $E$ is $\hat{\mu}$-(semi)stable. The restriction $E|_D$ to a general divisor $D \in \Pi_a$ remains $\hat{\mu}$-(semi)stable for $a \gg 0$.
\end{cor}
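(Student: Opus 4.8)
The plan is to bootstrap from the restriction theorems already proved for the auxiliary finite morphism $\pi: Y \to \CP(W)$ to the original linear system $\Pi_a = |\sheaf{O}_X(a)|$ on $X$. The crucial observation is that for a general divisor $D' \in \Ha_a = \CP(S^a(W^\vee))$, the preimage $\pi^{-1}(D') \subset Y$ is cut out on $Y$ by a hypersurface of degree $a$ coming from the sublinear system of $|\sheaf{O}_X(a)|$ spanned by monomials of degree $a$ in the sections of $W \subset H^0(X,\sheaf{O}_X(1))$; more precisely, $\pi^{-1}(D')$ is (scheme-theoretically) $Y \cap D$ for some $D \in \Pi_a$. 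So Theorems~\ref{thm:StableRestriction} and~\ref{thm:RestrictionSemi} already give us $\hat{\mu}$-(semi)stability of $E|_{Y \cap D}$ for $D$ in a certain nonempty open subset of a linear subsystem of $\Pi_a$, provided $a \gg 0$. What remains is to upgrade ``general in a subsystem'' to ``general in $\Pi_a$'', and to replace $E|_{Y\cap D}$ by $E|_D$ (these agree as sheaves on $X$ since $\Supp(E) = Y$, so this is only a bookkeeping point).

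The upgrading step is where I would invoke openness of $\hat{\mu}$-(semi)stability in flat families, as the corollary's statement already hints. First I would note that $E|_D$ is pure for a general $D \in \Pi_a$: by \cite[Cor.~1.1.14]{huybrechts2010geometry} it suffices that $D$ avoid the finitely many associated points of $E$ and of all $\Exts^q_X(E,\omega_X)$, which holds generically since $\sheaf{O}_X(a)$ is very ample for $a>0$. Next, restriction to divisors preserves flatness over the base after shrinking: form the universal divisor $\mathcal{D} \subset X \times \Pi_a$ with its projection $q: \mathcal{D} \to \Pi_a$, and consider the restriction $\mathcal{E} := (p_X^*E)|_{\mathcal{D}}$, where $p_X$ is the projection to $X$. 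Over the open locus $U \subset \Pi_a$ where $D$ is $E$-regular (equivalently, where the natural map $p_X^*E(-\mathcal{D}) \to p_X^*E$ restricted to the fibers stays injective — this is an open condition, and nonempty as above), the sheaf $\mathcal{E}$ is flat over $U$ with fibers $E|_D$. Now the subset of $U$ parametrizing $D$ with $E|_D$ pure of the correct dimension and $\hat{\mu}$-semistable (resp. $\hat{\mu}$-stable) is open in $U$ by the openness of these conditions in flat families — for semistability this is \cite[Prop.~2.3.1]{huybrechts2010geometry} and the analogous statement for stability, adapted to pure sheaves via the pushforward $\pi_*$ and the slope dictionary recorded above. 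Finally, I would check this open set is nonempty by exhibiting one good $D$: take $D \in \Pi_a$ whose intersection with $Y$ equals $\pi^{-1}(D')$ for a $D' \in \Ha_a$ satisfying the hypotheses of Theorem~\ref{thm:StableRestriction} (in the stable case) or lying in the general locus of Theorem~\ref{thm:RestrictionSemi} (in the semistable case) — such $D'$ exist for $a \gg 0$, and one checks using \eqref{eq:compareSlopes} that $\hat{\mu}$-(semi)stability of $E|_{\pi^{-1}(D')}$ as a sheaf on $Y$ is equivalent to that of $E|_D$ as a sheaf on $X$, since $\Supp(E|_D) = Y \cap D = \pi^{-1}(D')$ and subsheaves correspond. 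Hence the open set is nonempty, so a general $D \in \Pi_a$ works.

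The main obstacle is the verification that one genuinely obtains a \emph{nonempty} open locus in the full system $\Pi_a$ rather than merely in the Veronese-type subsystem coming from $W$: this hinges on correctly matching $\pi^{-1}(D')$ with a bona fide member of $|\sheaf{O}_X(a)|$ and tracking that the intersection $Y \cap D$ is reduced/of the expected dimension so that the slope computation \eqref{eq:compareSlopes} applies verbatim. A secondary subtlety is that one must ensure $a$ can be chosen uniformly — i.e., the bound from Theorem~\ref{thm:StableRestriction} depends only on $\Delta(\pi_* E)H^{d-2}$ and the multiplicity $r$, which are fixed by the class $c$, so there is no circularity. Once these compatibility points are nailed down, the corollary follows formally by intersecting the (nonempty, open) loci for purity and for (semi)stability inside $\Pi_a$.
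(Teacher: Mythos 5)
Your proposal is correct in outline and follows essentially the same route as the paper: produce one good divisor via the restriction theorems over $\CP(W)$, transfer it to a member of $\Pi_a$, and then conclude by flatness of the restricted family over the locus of divisors avoiding the associated points of $E$ together with openness of $\hat{\mu}$-(semi)stability in flat families. The one inaccuracy is in your transfer step: $W$ is a subspace of $H^0(Y,\sheaf{O}_Y(1))$, not of $H^0(X,\sheaf{O}_X(1))$, so the degree-$a$ equation of $\pi^{-1}(D')$ need not a priori be a restriction of a form on $X$; the paper instead lifts it using the surjectivity of $H^0(X,\sheaf{O}_X(a)) \to H^0(Y,\sheaf{O}_Y(a))$ for $a \gg 0$, which repairs this point without any special choice of $W$.
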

\begin{proof}
We prove the result only when $E$ is $\hat{\mu}$-semistable, as the stable case is completely similar. By Theorem~\ref{thm:RestrictionSemi}, for $a \gg 0$ there is a divisor $D' \in \Ha_a$ such that the restriction $E|_{\pi^{-1}(D')}$ remains $\hat{\mu}$-semistable. Note that the divisor $\pi^{-1}(D')$ must avoid the associated points of $E$.
If we choose $a$ large enough, then $H^0(X, \sheaf{O}_X(a)) \to H^0(Y, \sheaf{O}_Y(a))$ is surjective, and so $\pi^{-1}(D')$ is the restriction of some divisor $D_0 \in \Pi_a$. In particular $D_0$ contains none of the associated points of $E$.

Consider the incidence variety $Z = \{ (D,x) \in \Pi_a \times X \mid x \in D \}$ with its natural projections $p_1$ and $p_2$. If $U \subset \Pi_a$ is the open subset of divisors avoiding the associated points of $E$, then for each $D \in U$ we have a short exact sequence
\begin{align*}
    0 \to E(-D) \to E \to E|_D \to 0.
\end{align*}
Hence the Hilbert polynomial of $E|_D$ remains constant when $D$ varies through $U$, which means that $p_2^*E$ is flat over $U$. But $D_0 \in U$ and since the semistability property is open in flat families (cf.~\cite[Prop.~2.3.1]{huybrechts2010geometry}), we conclude that $E|_D$ remains $\hat{\mu}$-semistable for a general divisor $D \in \Pi_a$.

\end{proof}

\subsection{Restriction of a flat family to a divisor}

Next we turn our attention to the flatness property for families of sheaves. 
In what follows, let $S$ be a $\C$-scheme of finite type and $\sheaf{E}$ a coherent $\sheaf{O}_{S \times X}$-module such that each geometric fiber $\sheaf{E}_s$ has no zero-dimensional associated points.

\begin{lemma}\label{lemma:pureRestriction}
For $a \gg 0$, there exists a general divisor $D \in \Pi_a$ that avoids the associated points of every geometric fiber $\sheaf{E}_s$ for $s \in S$. 
\end{lemma}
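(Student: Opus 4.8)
\noindent The plan is to capture the associated points of all the fibres $\sheaf{E}_s$ inside finitely many \emph{bounded} families of subvarieties of $X$, all of \emph{positive} dimension, and then to exclude a general divisor of large degree from containing any member of these families by a dimension count in $\Pi_a$.

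\emph{Step 1 (reduction to a bounded flat family).} Since $S\times X\to S$ is projective, $\sheaf{E}$ admits a flattening stratification; restricting to the strata one at a time, and then to the irreducible components of each stratum with their reduced structure, preserves both the fibres $\sheaf{E}_s$ and the hypothesis on them, so we may assume $S$ is integral and $\sheaf{E}$ is flat over $S$. This reduction is legitimate because, for a \emph{fixed} $a$, the assertion ``a general $D\in\Pi_a$ avoids $\Ass(\sheaf{E}_s)$ for all $s$'' is the intersection of the same assertions over the finitely many strata, and a finite intersection of dense opens in $\Pi_a$ is dense; at the end one simply takes $a$ to be the maximum of the finitely many bounds produced. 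Flatness makes the Hilbert polynomial of $\sheaf{E}_s$ locally constant, hence constant on the integral base, so $\{\sheaf{E}_s\}_{s\in S}$ is a bounded family.

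\emph{Step 2 (encoding the relative associated points).} For $\sheaf{E}$ flat over the integral Noetherian base $S$, Grothendieck's theory of associated points in flat families provides, after one further stratification of $S$, finitely many integral closed subschemes $Z_1,\dots,Z_N\subseteq S\times X$, each flat over $S$ with integral fibres, such that $\Ass(\sheaf{E}_s)=\{\,\text{generic points of }(Z_1)_s,\dots,(Z_N)_s\,\}$ for every $s$. The hypothesis on $\sheaf{E}$ enters precisely here: if no $\sheaf{E}_s$ has a zero-dimensional associated point, then no $(Z_k)_s$ is zero-dimensional; equivalently, in terms of the relative Ext-sheaves $\Exts^q_{S\times X}(\sheaf{E},\operatorname{pr}_X^*\omega_X)$ (whose fibres compute $\Exts^q_X(\sheaf{E}_s,\omega_X)$ after a stratification ensuring base change), the hypothesis is exactly the vanishing of the top one, $\Exts^n_X(\sheaf{E}_s,\omega_X)=0$. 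We discard those $Z_k$ equal to $S\times X$ (they occur only when $\sheaf{E}_s$ is torsion-free, contribute the generic point of $X$, and no effective divisor contains $X$); the remaining $Z_k$ satisfy $1\le\dim (Z_k)_s\le n-1$, and, being flat over $S$, each has fibres with one and the same Hilbert polynomial $P_{Z_k}$, a polynomial of degree $\ge 1$. In particular $\{(Z_k)_s\}$ is a bounded family of positive-dimensional subvarieties of $X$.

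\emph{Step 3 (dimension count in $\Pi_a$).} By boundedness there is $N_0$ such that for each $k$, each $s$ and each $a\ge N_0$ the restriction map $H^0(X,\sheaf{O}_X(a))\to H^0((Z_k)_s,\sheaf{O}_{(Z_k)_s}(a))$ is surjective, so its rank equals $h^0((Z_k)_s,\sheaf{O}_{(Z_k)_s}(a))=P_{Z_k}(a)$; since only finitely many $P_{Z_k}$ occur, each of degree $\ge 1$, we get $\min_k P_{Z_k}(a)\to\infty$ as $a\to\infty$. Form the incidence set
\begin{align*}
  \Gamma_a \;=\; \bigcup_{k}\bigl\{\,(s,D)\in S\times\Pi_a\;:\;(Z_k)_s\subseteq D\,\bigr\}.
\end{align*}
Its fibre over a point $s$ is a union of linear subspaces $\CP\bigl(H^0(X,\sheaf{I}_{(Z_k)_s/X}(a))\bigr)$ of $\Pi_a$, the $k$-th of codimension exactly $P_{Z_k}(a)$, whence $\dim\Gamma_a\le\dim S+\dim\Pi_a-\min_k P_{Z_k}(a)$. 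Choosing $a\ge N_0$ with $\min_k P_{Z_k}(a)>\dim S$, the projection of $\Gamma_a$ to $\Pi_a$ has image a proper closed subset; a general $D\in\Pi_a$ therefore contains none of the $(Z_k)_s$, equivalently contains none of the subvarieties $\overline{\{\xi\}}$ with $\xi\in\Ass(\sheaf{E}_s)$ and $\overline{\{\xi\}}\ne X$, i.e.\ avoids $\Ass(\sheaf{E}_s)$ for every $s$. Taking $a$ to be the maximum of the finitely many bounds coming from the strata of Step 1 completes the proof.

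\emph{The main obstacle} is Step 2: passing from the fibrewise statement to a genuine \emph{family} of subvarieties of $X$ that is simultaneously bounded and fibrewise of dimension $\ge 1$. The crucial structural point is that the absence of zero-dimensional associated points on the fibres is exactly what forbids zero-dimensional members of the relative assassin (equivalently, kills the top relative Ext-sheaf), so every relevant $Z_k$ has fibres of dimension between $1$ and $n-1$; it is this positivity of the fibre dimension that forces the codimension of the incidence locus in $\Pi_a$ to grow without bound with $a$, which is what the count in Step 3 requires. Steps 1 and 3 are otherwise routine.
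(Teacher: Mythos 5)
Your proof is correct and follows essentially the same strategy as the paper's: both rest on the boundedness of the family of positive-dimensional closures of associated points of the fibres, followed by a dimension count in an incidence variety whose fibre codimension equals the Hilbert polynomial $P(a)$ and hence grows without bound. The only difference is packaging: the paper forms the incidence variety over the Hilbert scheme $\Hilb_X(P)$ and obtains boundedness directly from \cite[Prop.~1.3]{grothendieck1961techniques}, which lets it bypass your Step 2 entirely, whereas you form it over $S$ and therefore need the heavier (and somewhat informally justified) stratification of the relative assassin into flat families.
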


\begin{proof}
Consider the family $\Sigma$ of all irreducible closed subschemes of $X$ corresponding to associated points of $\sheaf{E}_s$ for $s \in S$ closed. By \cite[Prop.~1.3]{grothendieck1961techniques}, the family $\Sigma$ is bounded, i.e.~the set of Hilbert polynomials $\{ P_Y \}_{Y \in \Sigma}$ is finite, and for $a \gg 0$ every element of $\Sigma$ is $a$-regular. Without loss of generality, we may assume that all elements of $\Sigma$ have the same Hilbert polynomial, say $P$. In this case, $\Sigma$ is a subset of the Hilbert scheme $\Hilb_X(P)$.

Consider the incidence variety
\begin{align*}
    I = \{ (Y,D) \in \Hilb_X(P) \times \Pi_a \mid Y \in D \}
\end{align*}
with its natural projections $p_1$ and $p_2$. Since each $Y \in \Sigma$ is $a$-regular, its Hilbert polynomial satisfies $P(a) = h^0(\sheaf{O}_{Y}(a))$. Thus, $p_1$ is a projective bundle with fibers of dimension $h^0(\sheaf{O}_X(a))-P(a)$. Therefore, $\dim(I) = \dim(\Hilb_X(P)) + h^0(\sheaf{O}_X(a))-P(a)$. On the other hand, the general fiber of $p_2$ has dimension
\begin{align*}
    \dim(I)-\dim(\Pi_a) = \dim(\Hilb_X(P))-P(a)  < 0.
\end{align*}
The last inequality is satisfied for $a \gg 0$, since $P$ has degree $> 0$ by assumption. In other words, for $a \gg 0$ there is a general divisor $D \in \Pi_a$ which contains no point of $\Hilb_X(P)$. In particular, $D$ contains no element of $\Sigma$.
\end{proof}

\begin{lemma}\label{lemma:flatRestriction}
Assume that $\sheaf{E}$ is $S$-flat and each geometric fiber $\sheaf{E}_s$ has no zero-dimensional associated points. Then the restriction $\sheaf{E}|_{S \times D}$ to a general divisor $D \in \Pi_a$ remains $S$-flat for $a \gg 0$.
\end{lemma}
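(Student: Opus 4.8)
The plan is to reduce the assertion to the criterion of flatness by fibres, after choosing the divisor so that its equation remains a nonzerodivisor on every fibre of $\sheaf{E}$.

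First I would apply Lemma~\ref{lemma:pureRestriction}: for $a \gg 0$ a general $D \in \Pi_a$ avoids the associated points of every geometric fibre $\sheaf{E}_s$, $s \in S$ --- this is the step that uses the hypothesis that no $\sheaf{E}_s$ has a zero-dimensional associated point. Fix such an $a$ and such a $D$, choose a section $t \in H^0(X,\sheaf{O}_X(a))$ cutting out $D$, and write $q : S \times X \to X$ for the projection. Then $S \times D$ is the zero scheme of $q^*t$, and there is a presentation
\begin{align*}
  \sheaf{E}(-a) \coloneqq \sheaf{E} \otimes q^*\sheaf{O}_X(-a) \xrightarrow{\;q^*t\;} \sheaf{E} \longrightarrow \sheaf{E}|_{S \times D} \longrightarrow 0 ,
\end{align*}
in which $\sheaf{E}$ and $\sheaf{E}(-a)$ are both $S$-flat, twisting by a line bundle pulled back from $X$ being irrelevant for flatness over $S$.

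The core of the argument is then the observation that, by the choice of $D$, multiplication by $q^*t$ is a nonzerodivisor on each fibre $\sheaf{E}_s$ (a property insensitive to the faithfully flat base change $\overline{\kappa(s)}/\kappa(s)$, so that it may be read off from Lemma~\ref{lemma:pureRestriction}). By the local criterion of flatness it suffices to show $\Tor_1^{\sheaf{O}_{S,s}}\big((\sheaf{E}|_{S \times D})_x,\kappa(s)\big) = 0$ for every $s \in S$ and every $x$ over $s$; writing $R = \sheaf{O}_{S,s}$, $\kappa = \kappa(s)$, $M = \sheaf{E}_x$ and $u$ for the germ of $q^*t$ at $x$, the exact sequence $0 \to uM \to M \to M/uM \to 0$ together with the $R$-flatness of $M$ identifies this $\Tor_1$ with $\Ker(uM \otimes_R \kappa \to M \otimes_R \kappa)$; since the composite $M \otimes_R \kappa \twoheadrightarrow uM \otimes_R \kappa \to M \otimes_R \kappa$ is multiplication by $u$ on the fibre $(\sheaf{E}_s)_x$, which is injective, the first arrow is an isomorphism, the second is injective, and the $\Tor_1$ vanishes. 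Alternatively one quotes the criterion of flatness by fibres directly for the two-term complex $\sheaf{E}(-a) \xrightarrow{q^*t} \sheaf{E}$.

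Given Lemma~\ref{lemma:pureRestriction} there is no serious obstacle here: the substantive input --- producing a single divisor $D$ that avoids the associated points of \emph{all} fibres simultaneously, which rests on the boundedness of those associated subschemes and their positive-dimensionality --- lies upstream, and everything after it is formal. The one point to keep an eye on is the compatibility between the geometric fibres of Lemma~\ref{lemma:pureRestriction} and the scheme-theoretic fibres over the possibly non-closed points $s \in S$, which causes no trouble because being a nonzerodivisor descends along faithfully flat field extensions.
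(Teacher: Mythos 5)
Your proof is correct and follows essentially the same route as the paper: choose $D$ via Lemma~\ref{lemma:pureRestriction} so that $D$ is $\sheaf{E}_s$-regular for every $s$, then invoke the local criterion of flatness (the paper cites \cite[Thm.~22.5]{matsumura1989commutative}, whose content is precisely the $\Tor_1$ computation you spell out). The only difference is that you unwind the criterion by hand rather than citing it.
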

\begin{proof}
Let $D \in \Pi_a$ be a general divisor satisfying Lemma~\ref{lemma:pureRestriction}. Then $D$ is $\sheaf{E}_s$-regular, i.e. $\sheaf{E}_s(-D) \to \sheaf{E}_s$ is injective, for every $s \in S$. By \cite[Thm.~22.5]{matsumura1989commutative} it follows that $\sheaf{E}|_{S \times D}$ is $S$-flat.
\end{proof}

Even though the family $\sheaf{E}|_{S \times D}$ remains $S$-flat, some of its fibers $\sheaf{E}_s|_D$ may have zero-dimensional associated points. In this case the hypotheses of Lemma~\ref{lemma:flatRestriction} are not fulfilled anymore for $\sheaf{E}|_{S \times D}$, and so we may lose the flatness if we restrict again our family. Nevertheless, we next show that in some cases there exists an open locus $U \subset S$, whose complement has codimension $\geq 2$ in $S$, over which the flatness is preserved.

\begin{comment}
We need the following lemma, whose proof resembles the discussion in \cite[10.12]{kollar2017families}.

\begin{lemma}
Let $E$ be a coherent sheaf on $X$ such that $e \leq \dim_x(E) \leq d$ for all $x \in \Ass(E)$. Then the restriction $E|_D$ to a general divisor $D \in \Pi_a$ satisfies $e-1 \leq \dim_x(E|_D) \leq d-1$ for all $x \in \Ass(E|_D)$. 
\end{lemma}
\begin{proof}
There is a filtration
\begin{align*}
  0 = E_0 \subset E_1 \subset \ldots \subset E_m = E 
\end{align*}
such that each factor $E_i/E_{i-1}$ is a single associated point of $E$. Then there exists a divisor $D \in \Pi_a$ such that 
each restriction $(E_i/E_{i-1})|_D$ remains pure, cf.~\cite[Cor.~1.1.14 ii)]{huybrechts2010geometry}. Moreover $0 = E_0|_D \subset E_1|_D \subset \ldots \subset E_m|_D = E|_D$ remains a filtration and
\begin{align*}
 \Ass(E|_D) \subset \bigcup_i \Ass((E_i/E_{i-1})|_D),
\end{align*}
from which the claim follows immediately. 
\end{proof}
\end{comment}

In what follows, we assume that $S$ is endowed with the action of a connected algebraic group $G$. The following results are a direct generalization to the case of pure sheaves of \cite[Lem.~3.1]{greb2017compact} and \cite[Cor.~3.2]{greb2017compact}. For the reader's convenience, we include their proofs.

\begin{lemma}\label{lemma:nonflatRestriction}
Let $\sheaf{E}$ be a $G$-equivariant $S$-flat family of sheaves on $X$, and let $S_1,\ldots,S_k$ be $G$-stable irreducible closed subschemes of $S$. For each index $i$, suppose there exists a point $s_i \in S_i$ such that $\sheaf{E}_{s_i}$ is pure of dimension $d>0$. Then, for $a \gg 0$, there exists an open subset $U \subset \Pi_a$ such that for each $D \in U$, there exists a $G$-stable closed subscheme $T \subset S$ satisfying:
\begin{enumerate}[nolistsep]
    \item the restriction $\sheaf{E}|_{S \times D}$ is flat over $S \setminus T$,
    \item $D$ is smooth and $\sheaf{E}_s$-regular for all $s \in S \setminus T$,
    \item each $\sheaf{E}_{s_i}|_D$ is pure of dimension $d -1$ on $D$,
    \item for each $i$, $T \cap S_i$ has codimension $\geq 2$ in $S_i$.
\end{enumerate}
\end{lemma}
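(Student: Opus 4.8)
The plan is to produce, for $a \gg 0$, a single ``bad locus'' $T \subset S$ that simultaneously controls the non-flat points of $\sheaf{E}|_{S \times D}$ and the points where $D$ fails to be $\sheaf{E}_s$-regular, and then to verify the codimension-$2$ bound on each $S_i$ by a dimension count in an incidence variety, exactly as in \cite[Lem.~3.1]{greb2017compact} but keeping track of the fact that the fibers are only pure of dimension $d$ (not torsion-free). First I would invoke Lemma~\ref{lemma:pureRestriction} and Lemma~\ref{lemma:flatRestriction}: for $a \gg 0$ a general $D \in \Pi_a$ avoids the associated points of every geometric fiber $\sheaf{E}_s$, hence $\sheaf{E}|_{S \times D}$ is already $S$-flat and every $D$ in a suitable open $U \subset \Pi_a$ is smooth and $\sheaf{E}_s$-regular for \emph{all} $s$. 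At this stage properties (1) and (2) hold with $T = \emptyset$, and property (3) is Corollary~1.1.14 of \cite{huybrechts2010geometry} applied to the pure sheaves $\sheaf{E}_{s_i}$ (shrinking $U$ finitely many times so that $D$ misses the associated points of $\sheaf{E}_{s_i}$ and of the $\Exts^q(\sheaf{E}_{s_i},\omega_X)$). So the substance is entirely in property (4).

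For (4), the point of introducing $T$ at all is the following: after one restriction the fibers $\sheaf{E}_s|_D$ may acquire zero-dimensional associated points, so we cannot re-apply Lemma~\ref{lemma:pureRestriction} directly; instead we must take $T$ to be the locus of ``bad'' $s$ and bound its codimension. I would define $T$ as the union, over each $S_i$, of the closure of the locus of $s \in S_i$ for which either $\sheaf{E}_s$ fails to be pure of dimension $d$ or $\sheaf{E}_s$ has an embedded/associated subscheme that forces $\sheaf{E}_s|_D$ to be non-flat — more precisely, following Greb–Toma, for each $i$ let $Z_i \subset S_i \times X$ be the closed subscheme cut out by the associated points of the fibers $\sheaf{E}_s$ over the (non-empty, $G$-stable) locus where $\sheaf{E}_s$ is \emph{not} pure, and set $T_i = $ closure of $p_S(Z_i)$. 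Since $s_i \in S_i$ has $\sheaf{E}_{s_i}$ pure of dimension $d$ and purity is an open condition in $S$-flat families (\cite[Prop.~2.3.1]{huybrechts2010geometry} or the openness of the locus of fibers without embedded points), $T_i$ is a proper closed subset of $S_i$. The $G$-equivariance of $\sheaf{E}$ makes each such locus $G$-stable, so $T := \bigcup_i T_i$ (together with the non-smooth/non-regular locus of $D$, which is empty on $U$) is $G$-stable and closed.

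The main obstacle — and the step I would spend the most care on — is pinning down the codimension-$\geq 2$ bound in (4). This is where the degree of $D$ enters: one sets up the incidence variety $I \subset \Hilb_X(P) \times \Pi_a$ (as in Lemma~\ref{lemma:pureRestriction}) where $P$ ranges over the finitely many Hilbert polynomials of the irreducible components of the subschemes swept out by associated points of $\sheaf{E}_s$ over $\bigcup_i S_i$; the relevant subschemes here have dimension $\leq d-1$ (a pure $\sheaf{E}_s$ of dimension $d$ contributes only its $d$-dimensional support, which $D$ meets in dimension $d-1$, i.e.\ codimension $1$ — that is fine and does \emph{not} go into $T$; the problematic contributions are lower-dimensional associated subschemes of the non-pure fibers, together with the $(d-1)$-dimensional generic behaviour seen from a codimension-$\geq 1$ sublocus of $S_i$). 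Projecting $I$ to $\Pi_a$ and to $\Hilb$, and combining the fibre-dimension estimate $\dim I - \dim \Pi_a = \dim \Hilb_X(P) - P(a)$ with the parametrisation of the bad locus by $S_i$, one gets that for $a \gg 0$ the locus in $S_i$ over which $D$ forces non-flatness has codimension at least two: intuitively, one ``general'' condition on $D$ removes codimension one worth of bad $s$, and a second independent general condition removes another, and $a$ can be taken large enough (depending only on the finitely many Hilbert polynomials, hence uniform over $S$) that both are simultaneously achievable on an open $U \subset \Pi_a$. I would carry this out by adapting the dimension count of \cite[Lem.~3.1]{greb2017compact} verbatim, the only change being to replace ``torsion-free fiber'' with ``pure fiber of dimension $d$'' and to note that Corollary~1.1.14 of \cite{huybrechts2010geometry} supplies property~(3) as before. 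Finally, properties (1)–(2) are recorded for the $D \in U$ chosen, (3) follows from the finitely-many openness reductions on $U$, and (4) is the output of the dimension count, completing the proof.
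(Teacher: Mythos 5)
There is a genuine gap, concentrated in two places. First, your opening step --- that for $a \gg 0$ a general $D$ avoids the associated points of \emph{every} fiber, so that (1) and (2) hold with $T=\emptyset$ --- is not available: Lemmas~\ref{lemma:pureRestriction} and~\ref{lemma:flatRestriction} require every geometric fiber to have no zero-dimensional associated points, whereas in the present lemma only the distinguished fibers $\sheaf{E}_{s_i}$ are assumed pure. Over the locus where $\sheaf{E}_s$ has zero-dimensional embedded points, those points can sweep out all of $X$ as $s$ varies, so no single divisor avoids them, and the incidence-variety count of Lemma~\ref{lemma:pureRestriction} breaks down there (the relevant Hilbert polynomial is constant, so $\dim \Hilb_X(P)-P(a)$ never becomes negative). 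This is exactly why $T$ must be allowed to be nonempty; you acknowledge this later, but the proposal never repairs the opening claim.

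Second, and more seriously, the $T$ you actually define --- the closure of the non-pure locus inside each $S_i$ --- has codimension $\geq 1$ in $S_i$, not $\geq 2$, and the mechanism you offer for the upgrade (``one general condition on $D$ removes codimension one worth of bad $s$, and a second removes another'') is an assertion rather than an argument; it is also not what \cite[Lem.~3.1]{greb2017compact} does. The paper's proof runs differently: let $V\subset S$ be the largest $G$-stable open set over which the fibers are pure, so Lemma~\ref{lemma:flatRestriction} handles everything over $V$; for each $i$ with $S_i\setminus V\neq\emptyset$ one picks a \emph{single} point $s_i'\in S_i\setminus V$ and observes that for this one fixed sheaf a general $D$ is smooth and $\sheaf{E}_{s_i'}$-regular by \cite[Lem.~1.1.12]{huybrechts2010geometry} and Bertini (avoiding the finitely many associated points of a single sheaf is always possible, whatever their dimensions). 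The local flatness criterion \cite[Thm.~22.5]{matsumura1989commutative} plus openness of flatness then gives a $G$-stable neighborhood $N(s_i',D)$ of $s_i'$ over which $\sheaf{E}|_{S\times D}$ is flat, and $T:=S\setminus\bigl(V\cup\bigcup_i N(s_i',D)\bigr)$ meets $S_i$ in a proper closed subset of $S_i\setminus V$, hence in codimension $\geq 2$ (choosing $s_i'$ generically in each maximal-dimensional component of $S_i\setminus V$). This pointwise step is the missing idea. A dimension count over $S_i\times\Pi_a$ (rather than over $\Hilb_X(P)\times\Pi_a$), exploiting that the bad base locus already has codimension $\geq 1$ and that the non-regular $D$ for a fixed $s$ form a proper closed subset of $\Pi_a$, could plausibly be made to work and would give a uniform $T$, but it requires establishing constructibility of the non-flat locus in the product and is not what you wrote.
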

\begin{proof}
Since the property of being a pure sheaf is open, there exists a largest $G$-stable open subset $V \subset S$, containing all the $s_i$, such that $\sheaf{E}$ has pure fibers of dimension $d$ over $V$. By Lemma~\ref{lemma:flatRestriction}, for $a \gg 0$ there exists an open subset $U_0 \subset \Pi_a$ such that $\sheaf{E}|_{S \times D}$ is flat over $V$ for every $D \in U_0$. Moreover, by shrinking $U_0$ if necessary, we may also assume that each $D \in U_0$ is smooth and the restriction $\sheaf{E}_{s_i}|_D$ remains pure, cf.~\cite[Cor.~1.1.14 ii)]{huybrechts2010geometry}.

Next pick an index $1 \leq i \leq k$. If $S_i \cap V$ is empty, set $U_i \coloneqq \Pi_a$. Otherwise, choose a point $s'_i \in S_i \setminus V$. By \cite[Lem.~1.1.12]{huybrechts2010geometry} and Bertini's Theorem, there is an open subset $U_i \subset \Pi_a$ such that each $D \in U_i$ is smooth and $\sheaf{E}_{s'_i}$-regular. Therefore, by using \cite[Thm.~22.5]{matsumura1989commutative}, for each $D \in U_i$ we can find a $G$-stable open neighborhood $N(s'_i,D) \subset S$ of $s'_i$ over which $\sheaf{E}|_{S \times D}$ is flat. 

Consider the open subset $U \coloneqq U_0 \cap U_1 \cap \ldots \cap U_k$ of $\Pi_a$. Then, for each $D \in U$, the set
\begin{align*}
    T = S \setminus (V \cup \bigcup_{S_i \cap V \neq \emptyset} N(s'_i,D))
\end{align*}
satisfies the desired properties.
\end{proof}

\begin{cor}\label{lemma:completeRestriction}
Let $\sheaf{E}$ be a $G$-equivariant $S$-flat family of pure sheaves of dimension $d > 0$ on $X$ and let $S_1,\ldots,S_k$ be $G$-stable irreducible closed subschemes of $S$. For $a \gg 0$ and a general smooth complete intersection $X^{(d-1)} \subset X$ of $d-1$ divisors $D_1,\ldots,D_{d-1}$ in $\Pi_a$, there exists a $G$-stable closed subscheme $T \subset S$ satisfying:
\begin{enumerate}[nolistsep]
    \item the restriction $\sheaf{E}|_{S \times X^{(d-1)}}$ is flat over $S \setminus T$,
    \item the sequence $\underline{D} = D_1,\ldots,D_{d-1}$ is $\sheaf{E}_s$-regular for all $s \in S \setminus T$,
    \item for each $i$, $T \cap S_i$ has codimension $\geq 2$ in $S_i$.
\end{enumerate}
\end{cor}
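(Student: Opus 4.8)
The plan is to reduce Corollary~\ref{lemma:completeRestriction} to an iterated application of Lemma~\ref{lemma:nonflatRestriction}, peeling off one divisor $D_j$ at a time and controlling at each step both the flatness locus and the codimension of the bad set inside each $S_i$. First I would set up an induction on the number $j$ of divisors already chosen. At stage $j$ the family in play is $\sheaf{E}^{(j)} := \sheaf{E}|_{S \times X^{(j)}}$, where $X^{(j)} = D_1 \cap \cdots \cap D_j$, and by induction there is a $G$-stable closed $T_j \subset S$ with $\sheaf{E}^{(j)}$ flat over $S \setminus T_j$, the sequence $D_1,\ldots,D_j$ being $\sheaf{E}_s$-regular for $s \notin T_j$, each fiber $\sheaf{E}^{(j)}_s$ for $s \notin T_j$ pure of dimension $d-j$ (this follows from $\sheaf{E}_s$-regularity and \cite[Cor.~1.1.14 ii)]{huybrechts2010geometry}), and $T_j \cap S_i$ of codimension $\geq 2$ in $S_i$ for every $i$. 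The base case $j=0$ is trivial with $T_0 = \emptyset$, using the hypothesis that the $\sheaf{E}_s$ are pure of dimension $d$.

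For the inductive step I would apply Lemma~\ref{lemma:nonflatRestriction} to the $G$-equivariant $S$-flat family $\sheaf{E}$ restricted to $S \setminus T_j$ (which is still $G$-stable open), with the ambient dimension being $d-j > 0$, and with the closed subschemes taken to be the irreducible components of $S_i \setminus T_j$ for all $i$ (finitely many, each $G$-stable after base-change — here one must be slightly careful that $G$ connected forces $G$ to preserve each irreducible component, so they are indeed $G$-stable). The points $s_i$ required by that lemma exist: since $T_j \cap S_i$ has codimension $\geq 2$, the open set $S_i \setminus T_j$ is dense in $S_i$, nonempty, and over it $\sheaf{E}^{(j)}$ has pure fibers of dimension $d-j$. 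Lemma~\ref{lemma:nonflatRestriction} then produces, for $a \gg 0$, an open $U^{(j+1)} \subset \Pi_a$ so that for each $D_{j+1} \in U^{(j+1)}$ there is a $G$-stable closed $T' \subset S \setminus T_j$ with $\sheaf{E}^{(j)}|_{(S \setminus T_j) \times D_{j+1}}$ flat over $(S\setminus T_j)\setminus T'$, with $D_{j+1}$ smooth and $\sheaf{E}_s$-regular off $T'$, and with $T' \cap (S_i \setminus T_j)$ of codimension $\geq 2$ in each component of $S_i \setminus T_j$. Setting $T_{j+1} := T_j \cup \overline{T'}$ (closure taken in $S$) gives a $G$-stable closed subscheme; since $T_j \cap S_i$ already has codimension $\geq 2$ in $S_i$ and $T' \cap S_i$ has codimension $\geq 2$ in the dense open $S_i \setminus T_j$, the union still has codimension $\geq 2$ in $S_i$. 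Flatness of $\sheaf{E}^{(j+1)} = \sheaf{E}^{(j)}|_{S \times D_{j+1}}$ over $S \setminus T_{j+1}$, and $\sheaf{E}_s$-regularity of $D_1,\ldots,D_{j+1}$ there, follow by combining the inductive data on $S \setminus T_j$ with the new data from the lemma; purity of the fibers in dimension $d-j-1$ off $T_{j+1}$ follows again from regularity via \cite[Cor.~1.1.14 ii)]{huybrechts2010geometry}. After $d-1$ steps, take $T := T_{d-1}$ and $X^{(d-1)} := D_1 \cap \cdots \cap D_{d-1}$; intersecting the finitely many nonempty opens $U^{(1)},\ldots,U^{(d-1)} \subset \Pi_a$ (for a common $a \gg 0$) shows a general tuple $(D_1,\ldots,D_{d-1})$ works, and smoothness of the complete intersection $X^{(d-1)}$ follows from Bertini applied successively.

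The main obstacle I anticipate is bookkeeping the interaction between the shrinking base $S \setminus T_j$ and the closed subschemes $S_i$: at each step one applies Lemma~\ref{lemma:nonflatRestriction} not to the original $S_i$ but to the (possibly reducible, possibly lower-dimensional) traces $S_i \setminus T_j$, so one must track irreducible components, verify they remain $G$-stable (using connectedness of $G$), check the requisite pure-fiber points $s_i$ still lie in them, and — crucially — confirm that "codimension $\geq 2$ in each component of $S_i \setminus T_j$" together with "$T_j \cap S_i$ has codimension $\geq 2$ in $S_i$" really does yield "$T_{j+1} \cap S_i$ has codimension $\geq 2$ in $S_i$" after taking closures. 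A secondary technical point is ensuring a single value of $a$ works for all $d-1$ stages simultaneously; since there are only finitely many stages and each requires only $a \gg 0$, one simply takes the maximum of the finitely many thresholds. Everything else is a routine repackaging of Lemma~\ref{lemma:flatRestriction}, Lemma~\ref{lemma:nonflatRestriction}, and the openness/regularity statements already recorded.
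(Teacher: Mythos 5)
Your proposal is correct and takes exactly the paper's route: the paper's entire proof is the single sentence ``Apply inductively Lemma~\ref{lemma:nonflatRestriction},'' and you have simply written that induction out in full, with the codimension bookkeeping and the choice of a common $a \gg 0$ made explicit. One minor imprecision: at stage $j$ you assert that \emph{all} fibers of $\sheaf{E}^{(j)}$ over $S \setminus T_j$ are pure (regularity alone does not give this; \cite[Cor.~1.1.14 ii)]{huybrechts2010geometry} also needs the divisor to avoid the associated points of the relevant $\Exts$ sheaves), but Lemma~\ref{lemma:nonflatRestriction}(3) already guarantees purity at the distinguished points $s_i$, which is all the next application of that lemma requires, so the argument goes through unchanged.
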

\begin{proof}
Apply inductively Lemma~\ref{lemma:nonflatRestriction}.
\end{proof}

\begin{remark}\label{remark:avoidingPoints}
\begin{enumerate}[wide, labelwidth=!,nosep]
  \item For any point $s \in S$, we can find a subscheme $T \subset S$ in Lemma~\ref{lemma:nonflatRestriction}, resp.~Corollary~\ref{lemma:completeRestriction}, that avoids the $G$-orbit of $s$. For this, it is enough to choose the closure of the $G$-orbit of $s$ among the $S_i$.
  \item Since $G$ is connected, the irreducible components of $S$ may be chosen among the $S_i$ in Lemma~\ref{lemma:nonflatRestriction}.
  \item If $\sheaf{E}$ is a $G$-equivariant $S$-flat family of pure sheaves of dimension $2$, then one may take $T = \emptyset$ in Corollary~\ref{lemma:completeRestriction}. Indeed, this is a special case of Lemma~\ref{lemma:flatRestriction}.
\end{enumerate}
\end{remark}

\section{Semiampleness of determinant line bundles}\label{section:LineBundles}

In this section, we introduce a class of determinant line bundles which correspond functorially to any $S$-flat family of slope-semistable sheaves and then show their semiampleness. We will work under the following set-up:

\begin{setup}
Fix a class $c \in \grothnum{X}$ of dimension $d > 0$ and multiplicity $r$. Let $G$ be a connected algebraic group over $\C$ acting on a $\C$-scheme $S$ of finite type and $\sheaf{E}$ a $G$-equivariant $S$-flat family of sheaves of class $c$ on $X$.
\end{setup}

Consider the map $\lambda_\sheaf{E} : K(X) \to \Pic^G(S)$ given by the composition:
    \begin{align*}
        K(X) \xrightarrow{p_2^*} K^0(S \times X) \xrightarrow{\cdot [\sheaf{E}]} K^0(S \times X) \xrightarrow{{p_1}_{!}} K^0(S) \xrightarrow{\det} \Pic^G(S).
    \end{align*}
Note that if $u \in K(X)$ is a class satisfying $\chi(c \cdot u) = 0$, then
\begin{align*}
    \lambda_{\sheaf{E} \otimes p_1^*(L)}(u) \cong \lambda_{\sheaf{E}}(u)^{\rk(L)} \otimes L^{\chi(c \cdot u)} \cong \lambda_{\sheaf{E}}(u).
\end{align*}
for any line bundle $L$ on $S$. Also, one can easily see that $\lambda_\sheaf{E}$ behaves well under base-change. We refer the reader to \cite{le1996module} or \cite{le1992fibre} for a detailed account of the properties of $\lambda_\sheaf{E}$.

\subsection{Grothendieck classes of sheaves}

Pick an index $1 \leq j \leq d-1$ and choose $d-j$ divisors $D_1,\ldots,D_{d-j} \in \Pi_a$ such that $X^{(d-j)} \coloneqq D_1 \cap \cdots \cap D_{d-j}$ is a smooth complete intersection in $X$. For $l' > m' > 0$ consider the following class in $K(X)$:
\begin{align*}
    w_{l',m'} \coloneqq \chi(c(m') \cdot h^{d-j})[\sheaf{O}_X(l')] - \chi(c(l') \cdot h^{d-j})[\sheaf{O}_X(m')],
\end{align*}
where $h$ denotes the Grothendieck class $[\sheaf{O}_H] \in K(X)$ of some divisor $H \in \Pi_a$. The restriction of $w_{l',m'}$ to $X^{(d-j)}$ gives the class
\begin{align*}
    w_{l',m'}|_{X^{(d-j)}} = \chi(c|_{X^{(d-j)}}(m'))[\sheaf{O}_{X^{(d-j)}}(l')] - \chi(c|_{X^{(d-j)}}(l'))[\sheaf{O}_{X^{(d-j)}}(m')]
\end{align*}
in $K(X^{(d-j)})$.

\begin{remark}
\begin{enumerate}[wide, labelwidth=!,nosep]
\item If $n=d=2$, then we recover the classes defined in the torsion-free case (up to a multiplication by $l'-m'$) by Huybrechts--Lehn \cite[p.~223]{huybrechts2010geometry}.

\item Clearly $\chi(c \cdot w_{l',m'} \cdot h^{d-j}) = 0$, so one can use these classes to form determinant line bundles on the Simpson moduli space $M^{Gss}(c)$ of Gieseker-semistable sheaves of class on $X$ (see \cite[Thm.~8.1.5]{huybrechts2010geometry}).
\end{enumerate}
\end{remark}

One justification for the choice of these Grothendieck classes is provided by Lemma~\ref{lemma:lePotier} below. Before we state the result, some preparation is in place.

Denote by $P'$ the Hilbert polynomial of $c|_{X^{(d-j)}}$ and
\begin{align*}
    V' \coloneqq \C^{\oplus P'(m')}, \quad  \sheaf{H}' \coloneqq V' \otimes \sheaf{O}_{X^{(d-j)}}(-m').
\end{align*}
Consider the Quot scheme $\Quot_{X^{(d-j)}} \coloneqq \Quot_{X^{(d-j)}}(\sheaf{H}',P')$, endowed with the natural action of $G' \coloneqq \SL{V'}$. For $l' \gg m'$, Grothendieck \cite{grothendieck1961techniques} showed that there is a ($G'$-equivariant) closed immersion
\begin{align*}
    \varphi_{l'} : \Quot_{X^{(d-j)}} \to \Grass_{l'} \coloneqq \Grass(H^0(\sheaf{H}'(l')),P'(l'))
\end{align*}
that associates to a quotient $[q: \sheaf{H}' \to F]$ the corresponding quotient space $H^0(\sheaf{H}'(l')) \to H^0(F(l'))$. Denote by $H_{l'}$ the pullback via $\varphi_{l'}$ of the canonical line bundle $\sheaf{O}_{\Grass_{l'}}(1)$ on $\Grass_{l'}$ given by the Plücker embedding. Let $Q \subset \Quot_{X^{(d-j)}}$ be the open subscheme consisting of quotients $[q:\sheaf{H}' \to F]$ such that 
\begin{enumerate}[nosep]
 \item $H^i(F(m'))=0$ for all $i > 0$,
 \item the induced map $V' \to H^0(F(m'))$ is an isomorphism.
\end{enumerate}
Clearly $Q$ is invariant under the $G'$-action. This induces also a $G'$-linearization of $H_{l'}$ over $Q$, cf. \cite[p.~101]{huybrechts2010geometry}. 

We are now ready to prove the announced lemma, which is a generalization of \cite[Prop.~1.3]{le1996module} and relates the $G'$-equivariant line bundles $H_{l'}$ and $\lambda_{\sheaf{F}'}(w_{l,m}|_{X^{(d-j)}})$ over $Q$. Here and in what follows, $\sheaf{F}'$ denotes the universal quotient over $\Quot_{X^{(d-j)}}$.

\begin{lemma}\label{lemma:lePotier}
We have 
\begin{align*}
    \chi(c|_{X^{(d-j)}}(m')) H_{l'} = \lambda_{\sheaf{F}'}(w_{l',m'}|_{X^{(d-j)}})
\end{align*}
in $\Pic^{G'}(Q)$ with additive notation.
\end{lemma}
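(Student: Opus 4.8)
The plan is to prove the identity by reducing to the universal quotient on the Quot scheme and comparing Chern characters, exploiting the fact that both sides of the equation are first Chern classes of explicit elements of $K^0(Q)$. First I would work on $X^{(d-j)}$ throughout, writing $\sheaf{F}'$ for the universal quotient of $\sheaf{H}'$ on $Q \times X^{(d-j)}$, and recall the tautological description of $H_{l'}$: by Grothendieck's embedding $\varphi_{l'}$, the line bundle $H_{l'}$ is the determinant of the universal quotient bundle on $\Grass_{l'}$, and pulling back along $\varphi_{l'}$ we get $H_{l'} \cong \det\big({p_1}_{!}(\sheaf{F}'(l'))\big)$ on $Q$, using that $H^i(F(l'))=0$ for $i>0$ and $l'\gg m'$ so that ${p_1}_{!}(\sheaf{F}'(l'))$ is the locally free sheaf $p_{1*}(\sheaf{F}'(l'))$ with fibre $H^0(F(l'))$. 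Similarly, on $Q$ condition (2) gives ${p_1}_{!}(\sheaf{F}'(m')) \cong V' \otimes \sheaf{O}_Q$, a trivial bundle, hence $\det {p_1}_{!}(\sheaf{F}'(m'))$ is trivial in $\Pic^{G'}(Q)$.

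Next I would unwind the definition of $\lambda_{\sheaf{F}'}$ applied to the class $w_{l',m'}|_{X^{(d-j)}} = \chi(c|_{X^{(d-j)}}(m'))[\sheaf{O}_{X^{(d-j)}}(l')] - \chi(c|_{X^{(d-j)}}(l'))[\sheaf{O}_{X^{(d-j)}}(m')]$. Since $\lambda_{\sheaf{F}'}$ is additive and is the composition $\det \circ\, {p_1}_{!} \circ (\cdot[\sheaf{F}'])\circ p_2^*$, we get
\begin{align*}
\lambda_{\sheaf{F}'}(w_{l',m'}|_{X^{(d-j)}}) \cong \det\big({p_1}_{!}(\sheaf{F}'(l'))\big)^{\chi(c|_{X^{(d-j)}}(m'))} \otimes \det\big({p_1}_{!}(\sheaf{F}'(m'))\big)^{-\chi(c|_{X^{(d-j)}}(l'))}.
\end{align*}
By the previous paragraph the second factor is trivial over $Q$, so $\lambda_{\sheaf{F}'}(w_{l',m'}|_{X^{(d-j)}}) \cong \det\big({p_1}_{!}(\sheaf{F}'(l'))\big)^{\chi(c|_{X^{(d-j)}}(m'))} = H_{l'}^{\chi(c|_{X^{(d-j)}}(m'))}$, which is exactly the claimed equality $\chi(c|_{X^{(d-j)}}(m'))H_{l'} = \lambda_{\sheaf{F}'}(w_{l',m'}|_{X^{(d-j)}})$ in additive notation. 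I would make sure all identifications are $G'$-equivariant: the $G'$-linearization of $H_{l'}$ is by definition the one induced from the Plücker line bundle, and the linearization on $\det {p_1}_{!}(\sheaf{F}'(l'))$ and on $V'\otimes\sheaf{O}_Q$ are the natural ones, so the isomorphisms above respect linearizations (this is the content of the reference to \cite[p.~101]{huybrechts2010geometry}).

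The main obstacle I anticipate is not any deep computation but rather the careful bookkeeping needed to justify that ${p_1}_{!}(\sheaf{F}'(l'))$ is genuinely represented by a vector bundle compatible with the Plücker description, i.e. that cohomology and base change applies uniformly over $Q$ and that the $G'$-linearizations match on the nose rather than up to a character of $G'$ — here one uses $G'=\SL(V')$ has no nontrivial characters, which removes the last ambiguity. A secondary point to handle is the passage from $\Quot_{X^{(d-j)}}$ to the open subscheme $Q$: the triviality of $\det {p_1}_{!}(\sheaf{F}'(m'))$ only holds over $Q$, which is precisely why the statement is formulated in $\Pic^{G'}(Q)$ and not on the whole Quot scheme. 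Modulo these standard verifications — which parallel the torsion-free argument of \cite[Prop.~1.3]{le1996module} restricted to the complete intersection $X^{(d-j)}$ — the proof is a direct unwinding of definitions.
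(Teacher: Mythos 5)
Your argument is correct and is essentially the paper's own proof: both identify $H_{l'}$ with $\lambda_{\sheaf{F}'}([\sheaf{O}_{X^{(d-j)}}(l')])$ via the Grothendieck--Pl\"ucker embedding (this is \cite[Prop.~2.2.5]{huybrechts2010geometry}), observe that $\lambda_{\sheaf{F}'}([\sheaf{O}_{X^{(d-j)}}(m')]) \cong \det(V')\otimes\sheaf{O}_Q$ is trivial in $\Pic^{G'}(Q)$ because of conditions (1) and (2) defining $Q$ and because $G'=\SL{V'}$ acts trivially on $\det(V')$, and then conclude by additivity of $\lambda_{\sheaf{F}'}$. Your extra remarks on base change and on the absence of nontrivial characters of $\SL{V'}$ are exactly the standard verifications the paper leaves implicit.
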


\begin{proof}
By \cite[Prop.~2.2.5]{huybrechts2010geometry}, $H_{l'}$ can be rewritten as
\begin{align*}
    H_{l'}  = \lambda_{\sheaf{F}'}([\sheaf{O}_{X^{(d-j)}}(l')]).
\end{align*}
On the other side, we have ${R^ip_1}_*(\sheaf{F}'(m')) = 0$ for all $i > 0$ and ${p_1}_*(\sheaf{F}'(m')) \cong V' \otimes \sheaf{O}_Q$ over $Q$. Therefore,
\begin{align*}
    \lambda_{\sheaf{F}'}([\sheaf{O}_{X^{(d-j)}}(m')]) \cong \det({p_1}_*(\sheaf{F}'(m'))) \cong \det(V')\otimes \sheaf{O}_Q
\end{align*}
is trivial in $\Pic^{G'}(Q)$. We obtain
\begin{align*}
    \lambda_{\sheaf{F}'}(w_{l',m'}|_{X^{(d-j)}}) &= \chi(c|_{X^{(d-j)}}(m')) \lambda_{\sheaf{F}'}([\sheaf{O}_{X^{(d-j)}}(l')]) - \chi(c|_{X^{(d-j)}}(l')) \lambda_{\sheaf{F}'}([\sheaf{O}_{X^{(d-j)}}(m')]) \\
      &= \chi(c|_{X^{(d-j)}}(m'))H_{l'}
\end{align*}
in $\Pic^{G'}(Q)$ (with additive notation).
\end{proof}

The lemma below will be used implicitly in the next section to assume that the morphism $\varphi_{l'}$ embeds the Quot scheme into the Grassmannian for any complete intersection $X^{(d-j)}$. 

\begin{lemma}\label{lemma:quotEmbedding}
For $l' \gg 0$, the morphism $\varphi_{l'}$ is a closed immersion for every smooth complete intersection $X^{(d-j)} \subset X$ of $d-j$ divisors in $\Pi_a$.
\end{lemma}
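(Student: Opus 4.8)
\textbf{Proof plan for Lemma~\ref{lemma:quotEmbedding}.}

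The plan is to reduce the claim to a \emph{uniform} version of Grothendieck's embedding theorem, i.e.\ to show that a single threshold $l'$ works simultaneously for the whole family of complete intersections $X^{(d-j)} \subset X$ cut out by $d-j$ divisors in $\Pi_a$. First I would recall why $\varphi_{l'}$ is a closed immersion for a fixed $X^{(d-j)}$: by \cite{grothendieck1961techniques} (see also \cite[Lem.~2.2.3 and its proof]{huybrechts2010geometry}) this holds once $l'$ is large enough that, for every quotient $[q:\sheaf{H}' \to F]$ parametrized by $\Quot_{X^{(d-j)}}(\sheaf{H}',P')$, one has $H^i(F(l')) = 0$ for $i>0$, the map $H^0(\sheaf{H}'(l')) \to H^0(F(l'))$ is surjective, and $F(l')$ is globally generated with $F$ recoverable from the syzygies of $H^0(F(l'))$; equivalently, $l'$ exceeds the Castelnuovo--Mumford regularity of all such $F$ and of all kernels $\ker(q)$. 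The point is therefore purely a boundedness statement: the family of all sheaves $F$ arising as quotients of $\sheaf{H}' = V' \otimes \sheaf{O}_{X^{(d-j)}}(-m')$ with Hilbert polynomial $P'$, as $X^{(d-j)}$ ranges over all smooth complete intersections of $d-j$ members of $\Pi_a$, is bounded.

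To prove this boundedness I would work on $X$ rather than on the varying $X^{(d-j)}$. Any quotient $[q:\sheaf{H}' \to F]$ on $X^{(d-j)}$ pushes forward to a quotient of $V' \otimes \sheaf{O}_X(-m')$ on $X$ (via the closed immersion $\iota: X^{(d-j)} \hookrightarrow X$), and $\iota_* F$ has Hilbert polynomial $P'$ viewed as a polynomial on $X$, supported on $X^{(d-j)}$. Since the complete intersections $X^{(d-j)}$ themselves form a bounded family (they are parametrized by an open subscheme of $(\Pi_a)^{d-j}$, hence sit in a single component of a Hilbert scheme of $X$), and the class $c|_{X^{(d-j)}}$ is locally constant in this family, the sheaves $\iota_* F$ all have the same Hilbert polynomial $P'$ on $X$ and are quotients of the \emph{fixed} sheaf $V' \otimes \sheaf{O}_X(-m')$. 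Hence they are all parametrized by a single Quot scheme $\Quot_X(V' \otimes \sheaf{O}_X(-m'), P')$ of finite type over $\C$, and therefore form a bounded family on $X$. By Grothendieck's uniform bound (or Mumford's regularity estimate, cf.\ \cite[Lem.~1.7.6]{huybrechts2010geometry}), there is a single $m_0$ such that every such $\iota_*F$ is $m_0$-regular on $X$; restricting back along $\iota$ and using that $X^{(d-j)}$ is cut out by $d-j$ hypersurfaces of degree $a$, one gets a single regularity bound for the $F$ on each $X^{(d-j)}$ (a Koszul-complex computation relates $\reg_{X^{(d-j)}}(F)$ to $\reg_X(\iota_* F)$ and $a$). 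The same argument applied to the kernels $\ker(q) \subset \sheaf{H}'$ — themselves a bounded family — gives a uniform bound for them. Taking $l'$ larger than both uniform bounds makes $\varphi_{l'}$ a closed immersion for \emph{every} $X^{(d-j)}$ simultaneously.

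The main obstacle, and the step I would be most careful about, is the reduction to a single Quot scheme on $X$: one must check that the Hilbert polynomial of $\iota_* F$ on $X$ really is independent of which complete intersection $X^{(d-j)}$ is chosen, so that all the pushed-forward quotients land in one finite-type Quot scheme. This follows because $\iota_* F$ is a sheaf with the numerical class $c|_{X^{(d-j)}}$ (pushed to $X$), and the Grothendieck class $c|_{X^{(d-j)}} = c \cdot h^{d-j} \in \grothnum{X}$ does not depend on the particular divisors chosen in $\Pi_a$ — only on their common degree $a$ — by deformation invariance of Euler characteristics in the flat family of complete intersections over the open locus of $(\Pi_a)^{d-j}$. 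Once this independence is in hand, boundedness on $X$ is immediate, and the rest is the standard regularity bookkeeping; I would relegate the Koszul-complex estimate comparing regularity on $X^{(d-j)}$ with regularity on $X$ to a one-line citation rather than spelling it out.
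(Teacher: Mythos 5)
Your argument is correct and arrives at the same uniformity statement as the paper, but by a different mechanism. The paper's proof is shorter and more packaged: it observes that all the complete intersections $X^{(d-j)}$ share one Hilbert polynomial $P$ (computed via the Koszul resolution), hence are parametrized by $\Hilb_X(P)$; the individual Quot schemes $\Quot_{X^{(d-j)}}$ are then the fibres of the single finite-type relative Quot scheme $\Quot_Z(V'\otimes\sheaf{O}_Z(-m'),P')\to\Hilb_X(P)$ over the universal subscheme $Z$, and one invokes the relative Grothendieck embedding for this family together with its compatibility with base change to each fibre. You instead unpack the embedding criterion into uniform Castelnuovo--Mumford regularity bounds for the quotients and their kernels, and obtain these by pushing forward to $X$ and using boundedness inside the single Quot scheme $\Quot_X(V'\otimes\sheaf{O}_X(-m'),P')$. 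Both routes are legitimate: the paper's buys an essentially one-line proof by citing the relative embedding theorem, while yours makes the underlying boundedness mechanism explicit and self-contained. Two small refinements to your write-up: for the quotients $F$ themselves no Koszul-complex comparison is needed, since $H^i(X^{(d-j)},F(k)) = H^i(X,\iota_*F(k))$ identically (the polarization on $X^{(d-j)}$ is the restriction of $\sheaf{O}_X(1)$), so $\reg_{X^{(d-j)}}(F)=\reg_X(\iota_*F)$ on the nose; the extension/Koszul bookkeeping is genuinely required only for the kernels, where $\iota_*\ker(q)$ is the quotient of $\ker\bigl(V'\otimes\sheaf{O}_X(-m')\to\iota_*F\bigr)$ by $V'\otimes\sheaf{I}_{X^{(d-j)}}(-m')$, and both of those vary in bounded families, so the long exact sequence gives the uniform bound you need.
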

\begin{proof}
Any such complete intersection $X^{(d-j)}$ may be seen as a closed point in the Hilbert scheme $\Hilb_X(P)$, where $P$ is the Hilbert polynomial of $X^{(d-j)}$. Indeed, $P$ does not depend on $X^{(d-j)}$, as one easily checks by using the Koszul resolution of $X^{(d-j)}$ to compute its Hilbert polynomial. Denote by $Z \subset \Hilb_X(P) \times X$ the universal closed subscheme. Then $\Quot_{X^{(d-j)}}$ is a closed point of the relative Quot scheme
\begin{align*}
   \Quot_{Z}(V' \otimes \sheaf{O}_{Z}(-m'),P') \to \Hilb_X(P).
\end{align*}
Since this construction is functorial, it is enough to choose $l'$ sufficiently large such that
\begin{align*}
    \varphi : \Quot_{Z}(V' \otimes \sheaf{O}_{Z}(-m'),P') \to \Grass(V' \otimes H^0(Z,\sheaf{O}_{Z}(l'-m')),P'(l')),
\end{align*}
is the Grothendieck embedding. 
\end{proof}

Next assume that the sequence $\underline{D} = D_1, \ldots, D_{d-j}$ is $\sheaf{E}_s$-regular for every $s \in S$. Set $\sheaf{E}_i \coloneqq \sheaf{E} \otimes p_2^*(\Lambda^i \C^{d-1}\otimes \sheaf{O}_X(-ia))$ and consider the Koszul complex
\begin{align*}
    K_\bullet(\sheaf{E},\underline{D}) :\quad  0 \to \sheaf{E}_{d-j} \to \sheaf{E}_{d-2} \to \ldots \to \sheaf{E}_0
\end{align*}
corresponding to $\underline{D}$. Note that all the $\sheaf{E}_i$ are $S$-flat and the restriction of $K_\bullet(\sheaf{E},\underline{D})$ to each fiber over $S$ is exact, since $\underline{D}$ is $\sheaf{E}_s$-regular by assumption. According to \cite[\href{https://stacks.math.columbia.edu/tag/00MI}{Tag 00MI}]{stacks-project}, the complex $K_\bullet(\sheaf{E},\underline{D})$ is exact and $\sheaf{E}|_{S \times X^{(d-j)}} = \Coker(\sheaf{E}_1 \to \sheaf{E}_0)$ is $S$-flat.

\begin{lemma}\label{lemma:Giso}
There is a $G$-equivariant isomorphism
\begin{align*}
    \lambda_\sheaf{E}(w_{l',m'} \cdot h^{d-j}) \cong \lambda_{\sheaf{E}|_{S \times X^{(d-j)}}}(w_{l',m'}|_{X^{(d-j)}})
\end{align*}
of determinant line bundles over $S$.
\end{lemma}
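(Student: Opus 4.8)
The plan is to use the Koszul resolution $K_\bullet(\sheaf{E},\underline{D})$ of $\sheaf{E}|_{S \times X^{(d-j)}}$ to translate a computation on $X^{(d-j)}$ into one on $X$. The key observation is that for a class $u' \in K(X^{(d-j)})$, the determinant line bundle $\lambda_{\sheaf{E}|_{S \times X^{(d-j)}}}(u')$ depends only on the class of $\sheaf{E}|_{S \times X^{(d-j)}}$ in $K^0(S \times X^{(d-j)})$, and in $K^0(S\times X^{(d-j)})$ (or rather, after pushing forward along the closed immersion $\iota: X^{(d-j)}\hookrightarrow X$, in $K^0(S\times X)$) we have the identity $[\sheaf{E}|_{S\times X^{(d-j)}}] = \sum_{i=0}^{d-j}(-1)^i[\sheaf{E}_i] = [\sheaf{E}]\cdot\sum_i(-1)^i[\Lambda^i\C^{d-1}\otimes\sheaf{O}_X(-ia)] = [\sheaf{E}]\cdot h^{d-j}$, where the last equality is the standard Koszul identity $h^{d-j} = [\sheaf{O}_{X^{(d-j)}}] = \sum_i(-1)^i\binom{d-j}{i}[\sheaf{O}_X(-ia)]$ in $K(X)$. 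So the content is really a projection-formula / functoriality statement for the $\lambda$-construction with respect to the closed immersion $\iota$.

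First I would set up the comparison carefully: writing $\iota: X^{(d-j)} \to X$ for the inclusion and $\iota_S = \id_S\times\iota: S\times X^{(d-j)}\to S\times X$, I claim that for any $v \in K(X^{(d-j)})$ one has a $G$-equivariant isomorphism $\lambda_{\sheaf{E}|_{S\times X^{(d-j)}}}(v) \cong \lambda_{\iota_{S*}(\sheaf{E}|_{S\times X^{(d-j)}})}(\iota_* v)$ on $S$, where on the right we view $\iota_{S*}(\sheaf{E}|_{S\times X^{(d-j)}})$ as a (not $S$-flat, but still perfect over $S\times X$) complex representing the class $[\sheaf{E}]\cdot h^{d-j}$. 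This follows because $p_1$-pushforward and $\det$ only see the image in $K^0(S)$, and $(p_1^{X})_! \circ \iota_{S!} = (p_1^{X^{(d-j)}})_!$ by compatibility of proper pushforward with composition, while $\iota_S^* p_2^{X*} = p_2^{X^{(d-j)}*}\iota^*$ and the projection formula $\iota_{S!}(\iota_S^*\alpha \cdot \beta) = \alpha\cdot\iota_{S!}\beta$ in $K$-theory handle the twist by the class $v = w_{l',m'}|_{X^{(d-j)}} = \iota^*(w_{l',m'})$.

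Then I would apply this with $v = w_{l',m'}|_{X^{(d-j)}}$ and compute: $\lambda_{\sheaf{E}|_{S\times X^{(d-j)}}}(w_{l',m'}|_{X^{(d-j)}})$ equals $\det (p_1)_!\big(\iota_{S*}(\sheaf{E}|_{S\times X^{(d-j)}}) \cdot p_2^*(w_{l',m'})\big)$, and since $[\iota_{S*}(\sheaf{E}|_{S\times X^{(d-j)}})] = [\sheaf{E}]\cdot p_2^*(h^{d-j})$ in $K^0(S\times X)$ by the Koszul complex $K_\bullet(\sheaf{E},\underline{D})$ (whose terms are $S$-flat, so the alternating sum genuinely computes the class and the determinant factors through it), this is precisely $\lambda_\sheaf{E}(h^{d-j}\cdot w_{l',m'}) = \lambda_\sheaf{E}(w_{l',m'}\cdot h^{d-j})$. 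Throughout, the $G$-equivariance is automatic since every functor in the chain defining $\lambda$ is $G$-equivariant and the Koszul complex is built from $G$-equivariant data (the twists $p_2^*(\Lambda^i\C^{d-1}\otimes\sheaf{O}_X(-ia))$ carry the trivial $G$-action on $X$).

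The main obstacle, and the only point requiring real care, is the bookkeeping in the first step: one must check that the determinant of a perfect complex on $S$ obtained by pushing forward a non-$S$-flat sheaf agrees with the determinant obtained from the $S$-flat Koszul resolution, i.e. that $\det\circ (p_1)_!$ is genuinely well-defined on $K^0(S\times X)$ and insensitive to the choice of representative — this is standard (it is exactly the content that makes $\lambda_\sheaf{E}$ a well-defined map out of $K(X)$, cf. \cite{le1996module, le1992fibre}) but it is where the hypothesis that the $\sheaf{E}_i$ are $S$-flat and the Koszul complex fiberwise exact (established just before the statement) is used. Given that, the rest is a formal manipulation in $G$-equivariant $K$-theory using functoriality of $!$-pushforward and the projection formula.
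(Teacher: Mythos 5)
Your proof is correct and is essentially the paper's argument: both exploit the $S$-flat, fiberwise-exact Koszul resolution $K_\bullet(\sheaf{E},\underline{D})$ together with the additivity/multiplicativity of $\lambda$ and the identity $\sum_i(-1)^i[\Lambda^i\otimes\sheaf{O}_X(-ia)] = [\sheaf{O}_{X^{(d-j)}}] = h^{d-j}$ in $K(X)$; the paper just phrases it as an alternating tensor product $\bigotimes_i \lambda_{\sheaf{E}_i}(w_{l',m'})^{(-1)^i}$ rather than via $\iota_{S*}$ and the projection formula. Only cosmetic quibble: your ``$\iota_* v$'' should be a class $w\in K(X)$ with $\iota^*w = v$ (which you in effect use, since $v = \iota^*(w_{l',m'})$).
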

\begin{proof}
By applying the map $\lambda_{(-)}(w_{l',m'})$ to the Koszul resolution $K_\bullet(\sheaf{E},\underline{D})$ and using \cite[Lem.~8.1.2 i)]{huybrechts2010geometry}, we get the following chain of $G$-equivariant isomorphisms:
\begin{align*}
    \lambda_{\sheaf{E}|_{S \times X^{(d-j)}}}(w_{l',m'}|_{X^{(d-j)}})
    &\cong \bigotimes_{i=0}^{d-j} \lambda_{\sheaf{E}_i}(w_{l',m'})^{(-1)^i}\\
    &\cong \bigotimes_{i=0}^{d-j} \lambda_{\sheaf{E}}(w_{l',m'} \cdot [\Lambda^i E \otimes \sheaf{O}_X(-ia)])^{(-1)^i}\\
    &\cong \lambda_{\sheaf{E}}(\sum_{i=0}^{d-j} (-1)^i w_{l',m'} \cdot [\Lambda^i E \otimes \sheaf{O}_X(-ia)])\\
    &\cong \lambda_{\sheaf{E}}(w_{l',m'}\cdot [\sheaf{O}_{X^{(d-j)}}]).
\end{align*}
\end{proof}

\subsection{Semiampleness theorems for line bundles}\label{subsect:semiamplenessTheorems}

We continue in the notation of the previous section but take $j = 1$. So in what follows,
\begin{align*}
    w_{l',m'} \coloneqq \chi(c(m') \cdot h^{d-1})[\sheaf{O}_X(l')] - \chi(c(l') \cdot h^{d-1})[\sheaf{O}_X(m')].
\end{align*}
Consider the $G$-linearized determinant line bundle
\begin{align*}
  \sheaf{L}_{l',m'} \coloneqq \lambda_\sheaf{E}(w_{l',m'} \cdot h^{d-1})
\end{align*}
over $S$. Under the assumption that $S$ is \textit{weakly normal}, we next prove that $\sheaf{L}_{l',m'}$ is $G$-semiample for $l' \gg m' \gg 0$ and $a \gg 0$, i.e.~there is an integer $\nu > 0$ such that $\sheaf{L}_{l',m'}^\nu$ is globally generated by $G$-invariant sections. We refer the reader to \cite[Section~2.3]{greb2017compact} for the definition of weak normality and some of its useful properties.

We assume that $l' > 0$ is large enough such that the conclusion of Lemma~\ref{lemma:quotEmbedding} holds true. That is, for any smooth complete intersection $X^{(d-1)} \subset X$ of $d-1$ divisors in $\Pi_a$, the scheme $\Quot_{X^{(d-1)}}$ embeds into the Grassmannian via the Grothendieck morphism $\varphi_{l'} : \Quot_{X^{(d-1)}} \to \Grass_{l'}$,  as described in the previous section.

Let $s_0 \in S$ be a closed point, $S_1$ its $G$-orbit in $S$ and $S_2,\ldots,S_k$ the irreducible components of $S$. By Corollary~\ref{lemma:completeRestriction}, for a general smooth complete intersection $X^{(d-1)}$, there exists a $G$-stable closed subvariety $T \subset S$ such that each $T \cap S_i$ has codimension $\geq 2$ in $S_i$, and $\sheaf{G}\coloneqq \sheaf{E}|_{(S\setminus T) \times X^{(d-1)}}$ is an $(S\setminus T)$-flat family of sheaves of dimension one on $X^{(d-1)}$. Moreover, $\underline{D}$ is $\sheaf{E}_s$-regular for all $s \in S\setminus T$,  and so the Koszul complex $K_\bullet(\sheaf{E},\underline{D})$ is exact over $S \setminus T$. For $m' \gg 0$, we may assume that each fiber $\sheaf{G}_s$ is $m'$-regular for $s \in S \setminus T$, cf.~\cite[Exam.~1.8.7]{lazarsfeld2017positivity}. In particular, ${p_1}_*(\sheaf{G}(m'))$ is a locally free $G$-equivariant $\sheaf{O}_{S \setminus T}$-sheaf of rank $P'(m')$.

Denote by $\mathcal{S}$ the projective frame bundle associated to ${p_1}_*(\sheaf{G}(m'))$ and let $\pi:\mathcal{S} \to S \setminus T$ be the canonical projection. Notice that the action of $G'$ on ${p_1}_*(\sheaf{G}(m'))$ induces a natural $G'$-action on $\mathcal{S}$. Also, $\mathcal{S}$ inherits a $G$-action from $S$ that is compatible with its $G'$-action. Then there exists a quotient
\begin{align*}
  \sheaf{O}_{\mathcal{S}} \otimes \sheaf{H'} \to \pi^*\sheaf{G} \otimes \sheaf{O}_\pi(1),
\end{align*}
which induces a $G \times H$-equivariant morphism
\begin{align*}
  \Phi : \mathcal{S} \to \Quot_{X^{(d-1)}}.
\end{align*}
Above $\Quot_{X^{(d-1)}}$ is endowed with the trivial $G$-action. Thus we have a diagram as follows
\begin{center}
\begin{tikzcd}[column sep=normal]
\mathcal{S} \ar[d,"\pi"] \ar[r,"\Phi_{\sheaf{G}}"] &  \Quot_{X^{(d-1)}} \ar[d,dashed,"\varphi'"]  \\ 
 S\setminus T \ar[r,dashed] & M_{X^{(d-1)}}
\end{tikzcd}
\end{center}
where $M_{X^{(d-1)}}$ denotes the Simpson moduli space of slope-semistable sheaves of Hilbert polynomial $P'$ on ${X^{(d-1)}}$.

\begin{remark}
\begin{enumerate}[wide, labelwidth=!,nosep]
\item If $\sheaf{F}'$ denotes the universal family of quotients over $\Quot_{X^{(d-1)}}$, then $H_{l'} \coloneqq \lambda_{\sheaf{F}'}([\sheaf{O}_{X^{(d-1)}}(l')])$ is a very ample line bundle over $\Quot_{X^{(d-1)}}$ for $l' \gg m'$, cf. \cite[Prop.~2.2.5]{huybrechts2010geometry}. Moreover, $\Quot_{X^{(d-1)}}$ is endowed with a natural $G' \coloneqq \SL{V'}$ action, which further induces a $G'$-linearization of $H_{l'}$.  Now consider the $G$-stable open subscheme $R_{X^{(d-1)}} \subset \Quot_{X^{(d-1)}}$ consisting of $m'$-regular Gieseker-semistable quotients $[q: V' \otimes \sheaf{O}_{X^{(d-1)}}(-m') \to F]$ such that the induced map $V' \to H^0(F(m'))$ is an isomorphism. By Simpson's construction \cite{simpson1994moduli}, for $l' \gg m' \gg 0$, $R_{X^{(d-1)}}$ is the locus of GIT-semistable points of the closure $\overline{R_{X^{(d-1)}}} \subset \Quot_{X^{(d-1)}}$ with respect to $H_{l'}$. Furthermore, the rational map $\varphi'$ is well-defined over $R_{X^{(d-1)}}$ such that $R_{X^{(d-1)}} \to M_{X^{(d-1)}}$ is a good GIT quotient, in the sense of Mumford \cite{mumfordGIT}. 

\item It is important to note that the image of $\Phi_{\sheaf{G}}$ is actually contained in the open subscheme $Q \subset \Quot_{X^{(d-1)}}$ defined in the previous section. That is, $Q$ is the open subscheme consisting of quotients $[q:\sheaf{H}' \to G]$ such that 
\begin{enumerate}[(i),nosep]
 \item $H^i(F(m'))=0$ for all $i > 0$,
 \item the induced map $V' \to H^0(F(m'))$ is an isomorphism.
\end{enumerate}
Note that $R_{X^{(d-1)}}$ is an open subset of $Q$.
\end{enumerate}
\end{remark}

We have the following chain of isomorphisms:
\begin{align*}
    \Phi_{\sheaf{G}}^*(\lambda_{\sheaf{F}'}(w_{l',m'}|_{X^{(d-1)}})) &\cong \lambda_{\pi^*\sheaf{G} \otimes \sheaf{O}_\mathcal{S}(1)}(w_{l',m'}|_{X^{(d-1)}}) &&\textnormal{by \cite[Lem.~8.1.2 ii)]{huybrechts2010geometry}}\\
     &\cong \lambda_{\pi^*\sheaf{G}}(w_{l',m'}|_{X^{(d-1)}}) &&\textnormal{by \cite[Lem.~8.1.2 iv)]{huybrechts2010geometry}}\\
     &\cong \pi^*\lambda_{\sheaf{G}}(w_{l',m'}|_{X^{(d-1)}}) &&\textnormal{by \cite[Lem.~8.1.2 ii)]{huybrechts2010geometry}}\\
     &\cong \pi^*\lambda_{\sheaf{E}}(w_{l',m'} \cdot h^{d-1}) &&\textnormal{by Lemma~\ref{lemma:Giso}}\\
     &\cong \pi^*\sheaf{L}_{l',m'} &&\text{by definition}.
\end{align*}
Clearly the above isomorphisms are compatible with the group actions. Let $\sigma$ be a $G'$-invariant section in $H^0(\Quot_{X^{(d-1)}}, H_{l'}^{\nu'})^{G'}$ for some $\nu' > 0$. By Lemma~\ref{lemma:lePotier}, its restriction to $Q$ yields a $G'$-invariant section $\sigma_Q$ in $H^0(Q, \lambda_{\sheaf{F}'}(w_{l',m'}|_{X^{(d-1)}})^{\nu})^{G'}$ for some $\nu > 0$. Then $\Phi_\sheaf{G}^*(\sigma_Q)$ is a $G \times G'$-invariant section which descends to a $G$-invariant section in $H^0(S \setminus T, \sheaf{L}_{l',m'}^\nu)^G$ since $\pi$ is a good quotient. As $S$ is weakly normal, we can extend this section to the whole $S$ by using a result due to Greb--Toma \cite[Lem.~2.12]{greb2017compact}, which we rewrite in our notation below.

\begin{lemma}\label{lemma:extension}
Under the above assumptions, there exists a finite system of irreducible subvarieties $(S'_j)_{j=1,\ldots,t}$ with the following property: For any $G$-stable closed subvariety $T$ of $S$ such that
\begin{enumerate}[nolistsep]
    \item the intersection of $T$ with each irreducible component of $S$ has codimension $\geq 2$ in $S$, and
    \item $T$ contains none of the $S'_j$,
\end{enumerate}
any $G$-invariant section of $H^0(S \setminus T, \sheaf{L}_{l',m'})^G$ extends to a $G$-invariant section of $H^0(S, \sheaf{L}_{l',m'})^G$.
\end{lemma}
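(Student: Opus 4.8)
The plan is to reduce the problem to the normalization of $S$ and then descend, following \cite[Lem.~2.12]{greb2017compact}. Let $\eta : \tilde S \to S$ be the normalization; since $G$ is connected, its action lifts uniquely to $\tilde S$ making $\eta$ equivariant, and $\eta^*\sheaf{L}_{l',m'}$ is naturally $G$-linearized. I would form the reduced fibre product $W \coloneqq (\tilde S \times_S \tilde S)_{\mathrm{red}}$ with projections $q_1, q_2 : W \to \tilde S$, which satisfy $\eta q_1 = \eta q_2$ and hence $(\eta q_1)^*\sheaf{L}_{l',m'} = (\eta q_2)^*\sheaf{L}_{l',m'}$ canonically. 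The promised finite system is then $S'_j \coloneqq \overline{\eta(q_1(W_j))}$, where $W_1,\dots,W_t$ are the irreducible components of $W$ not contained in the diagonal.

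Given $T$ satisfying (1) and (2) and a section $\sigma \in H^0(S \setminus T, \sheaf{L}_{l',m'})^G$, the first step is to extend $\eta^*\sigma$ across $\eta^{-1}(T)$. Since $\eta$ is finite and surjective on each irreducible component, condition (1) forces $\eta^{-1}(T)$ to have codimension $\geq 2$ in $\tilde S$; as $\tilde S$ is normal (hence $S_2$) and $\eta^*\sheaf{L}_{l',m'}$ is locally free, $\eta^*\sigma$ extends uniquely to a section $\tilde\sigma \in H^0(\tilde S, \eta^*\sheaf{L}_{l',m'})$, and uniqueness together with the equivariance of the whole construction makes $\tilde\sigma$ $G$-invariant.

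Next I would check that $q_1^*\tilde\sigma = q_2^*\tilde\sigma$ on $W$. On the diagonal component both restrict to $\tilde\sigma$. On each $W_j$ with $j \geq 1$, condition (2) says $S'_j \not\subset T$, so $q_1(W_j) \not\subset \eta^{-1}(T)$, and thus $W_j \setminus q_1^{-1}(\eta^{-1}(T))$ is a dense open subset of $W_j$; over it both sections are pulled back from the genuine section $\sigma$ living on $S \setminus T$, so they coincide there, and since $W$ is reduced they coincide on all of $W$. This says exactly that $\tilde\sigma$ is constant along the set-theoretic fibres of $\eta$, so by the weak normality of $S$ (see \cite[Section~2.3]{greb2017compact}) it descends to $\bar\sigma \in H^0(S, \sheaf{L}_{l',m'})$ with $\eta^*\bar\sigma = \tilde\sigma$. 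Finally, since $\eta$ is surjective and $S$ reduced, $\eta^*$ is injective on global sections and $G$-equivariant; hence $\bar\sigma|_{S \setminus T} = \sigma$ and $\bar\sigma$ is $G$-invariant, as desired.

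The delicate point is the choice of the family $(S'_j)$ together with the descent: one has to set things up so that hypothesis (2) precisely guarantees that the codimension-$2$ extension produced on $\tilde S$ respects the gluing data defining $S$, and then apply the right descent property of weakly normal (equivalently, in characteristic zero, seminormal) schemes. The Hartogs-type extension on $\tilde S$ and the bookkeeping of the $G$-linearization are routine by comparison.
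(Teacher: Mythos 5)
Your proposal is correct and follows essentially the same route as the paper, which does not reprove the statement but cites it directly from Greb--Toma \cite[Lem.~2.12]{greb2017compact}: normalization, Hartogs-type extension over the codimension-two set $\eta^{-1}(T)$, and descent via the characterization of weak normality by functions constant on the fibres of $\eta$, with the $S'_j$ taken to be the images of the off-diagonal components of $(\tilde S\times_S\tilde S)_{\mathrm{red}}$. The only cosmetic remark is that in the density argument one removes $(\eta q_1)^{-1}(T)$, which equals $q_1^{-1}(\eta^{-1}(T))=q_2^{-1}(\eta^{-1}(T))$ since $\eta q_1=\eta q_2$ on $W$, so your formulation is fine.
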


Clearly, we may assume from the beginning that $T$ contains none of the $S'_j$ given by Lemma~\ref{lemma:extension}, and so we obtain a $G$-invariant section $\overline{\sigma} \in H^0(S,\sheaf{L}_{l',m'}^\nu)^G$. Therefore, we get a map
\begin{align*}
    \Gamma_{\sheaf{E}}: H^0(\Quot_{X^{(d-1)}}, H_{l'}^{\otimes \nu'})^{G'} \to H^0(S,\sheaf{L}_{l',m'}^{\otimes \nu})^G.
\end{align*}

We show below that inside $\Quot_{X^{(d-1)}}$ slope-semistability coincides with the notion of GIT-semistability (with respect to $H_{l'}$) introduced by Mumford \cite{mumfordGIT}.
\begin{lemma}\label{lemma:gitSs}
Let $[q: \sheaf{H'} \to F] \in \Quot_{X^{(d-1)}}$ be a quotient. Then $F$ is GIT-semistable if and only if it is slope-semistable.
\end{lemma}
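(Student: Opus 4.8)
\textbf{Proof strategy for Lemma~\ref{lemma:gitSs}.}

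The plan is to reduce the statement to the one-dimensional case treated by Simpson \cite{simpson1994moduli}, using the fact that on the curve-like complete intersection $X^{(d-1)}$ the sheaves in question have pure dimension one, so that slope-semistability, $\hat{\mu}$-semistability, and Gieseker-semistability all coincide. Concretely, recall that for a pure sheaf $F$ of dimension $1$ one has $\hat{\mu}(F) = \alpha_0(F)/\alpha_1(F)$, and since the Hilbert polynomial $P(F,m) = \alpha_1(F)m + \alpha_0(F)$ is linear, the normalized Hilbert polynomial $p(F) = P(F)/\alpha_1(F)$ orders subsheaves in exactly the same way as $\hat{\mu}$. Hence slope-semistability of $F$ as a sheaf on $X^{(d-1)}$ is literally the same condition as Gieseker-semistability of $F$ on $X^{(d-1)}$.

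Next I would invoke the standard GIT characterization from Simpson's construction, as recorded in the first item of the Remark following the diagram: for $l' \gg m' \gg 0$, a quotient $[q: \sheaf{H}' \to F]$ lying in $Q$ (i.e. with $H^i(F(m')) = 0$ for $i>0$ and $V' \xrightarrow{\sim} H^0(F(m'))$) is GIT-semistable with respect to the linearization $H_{l'}$ if and only if $F$ is Gieseker-semistable on $X^{(d-1)}$ and the map $V' \to H^0(F(m'))$ identifies $V'$ with the global sections; moreover the GIT-semistable locus of $\overline{R_{X^{(d-1)}}}$ is exactly $R_{X^{(d-1)}}$. So the two key inputs are: (a) the Hilbert--Mumford computation of Simpson/Le Potier--Huybrechts--Lehn identifying GIT-(semi)stability with Gieseker-(semi)stability plus the cohomological normalization conditions, and (b) the elementary equivalence of Gieseker-semistability and $\hat{\mu}$-semistability in dimension one. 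Combining (a) and (b) gives the claim for quotients in $Q$; for a general quotient $[q:\sheaf{H}'\to F]$ in $\Quot_{X^{(d-1)}}$ one first reduces to $Q$ by noting that outside $\overline{R_{X^{(d-1)}}}$ the sheaf $F$ cannot be slope-semistable (this is again part of Simpson's construction, since $m'$-regularity together with the normalization conditions is automatic for semistable sheaves when $m'\gg 0$, cf.~\cite[Exam.~1.8.7]{lazarsfeld2017positivity}), so both conditions fail simultaneously there.

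The main obstacle I anticipate is bookkeeping rather than conceptual: one must be careful that the order of quantifiers on $l', m', a$ is consistent with what has already been fixed in Section~\ref{subsect:semiamplenessTheorems} (namely $l' \gg m' \gg 0$ chosen so that Lemma~\ref{lemma:quotEmbedding} and Simpson's GIT description both apply simultaneously for \emph{every} smooth complete intersection $X^{(d-1)}$ of $d-1$ divisors in $\Pi_a$), and that the boundedness of the relevant family of sheaves on the varying $X^{(d-1)}$ lets one choose such bounds uniformly. Once the uniform choice is in place, the proof is simply: $F$ slope-semistable $\iff$ $F$ Gieseker-semistable on $X^{(d-1)}$ (dimension one) $\iff$ $[q]$ GIT-semistable with respect to $H_{l'}$ (Simpson), which is what we wanted.
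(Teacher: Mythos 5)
Your proposal captures one of the two ingredients the paper uses, namely that for pure one-dimensional sheaves Gieseker-semistability and $\hat{\mu}$-semistability coincide. But it black-boxes the step that carries all of the actual content of the lemma. Simpson's comparison theorem, as recorded in the Remark preceding the lemma and in \cite[Thm.~4.3.3]{huybrechts2010geometry}, identifies the GIT-semistable points of the \emph{closure} $\overline{R_{X^{(d-1)}}}$ of the good locus with $R_{X^{(d-1)}}$; it does not, off the shelf, describe the GIT-semistable points of all of $\Quot_{X^{(d-1)}}$. Since $\hat{\mu}$-semistability presupposes purity, the forward implication of the lemma requires showing that an arbitrary GIT-semistable quotient $[q:\sheaf{H}'\to F]$ has $F$ pure and $V'\to H^0(F(m'))$ bijective. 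This is precisely what the paper proves, using Le Potier's Hilbert--Mumford estimates \cite[Lem.~4.4.6, Cor.~4.4.7]{huybrechts2010geometry}: GIT-semistability forces $V'\to H^0(F(m'))$ to be injective and the subsheaf $F'$ generated by $V'$ to satisfy $P(F')\geq P(F)$, whence $F'=F$ and the map is bijective; applying the same estimate to a torsion subsheaf $T\subset F$ gives $H^0(T(m'))=0$, and since $T(m')$ is $0$-dimensional this yields $T=0$. Your appeal to ``the standard GIT characterization'' silently assumes exactly this purity statement, so the proposal as written has a genuine gap at the heart of the lemma.

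Your reduction of a general quotient to one in $Q$ is also incorrect. You claim that outside $\overline{R_{X^{(d-1)}}}$ the sheaf $F$ cannot be slope-semistable, so that both sides of the equivalence fail simultaneously there. But membership in $R_{X^{(d-1)}}$ constrains the quotient map $q$ (the condition that $V'\to H^0(F(m'))$ be an isomorphism), not only the isomorphism class of $F$: there are quotients with $F$ perfectly $\hat{\mu}$-semistable and $m'$-regular for which $V'\to H^0(F(m'))$ fails to be injective. For such points slope-semistability of $F$ holds while GIT-semistability fails, so the asserted simultaneous failure is false. (This also shows that the backward implication of the lemma genuinely needs the normalization condition, which is supplied in the paper's applications because the relevant points lie in the image of $\Phi_{\sheaf{G}}$, hence in $Q$.)
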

\begin{proof}
By Simpson's construction, it is enough to check that a GIT-semistable quotient $[q: \sheaf{H'} \to F] \in \Quot_{X^{(d-1)}}$ is pure. Firstly, by \cite[Cor.~4.4.7]{huybrechts2010geometry}, we have that $V' \to H^0(F(m'))$ is injective. Let $F'(m')$ be the subsheaf of $F(m')$ defined by the sections in $V'$. By \cite[Lem.~4.4.6]{huybrechts2010geometry}, it follows that $P(F') \geq P(F)$. Thus $F' = F$ and in particular $V' \to H^0(F(m'))$ is a bijection.

Applying again \cite[Cor.~4.4.7]{huybrechts2010geometry} to a torsion subsheaf $T \subset F$, we get $H^0(T(m'))=0$. But since $T(m')$ has dimension $0$, it is completely determined by its global sections. In conclusion $T = 0$ and $F$ is pure.
\end{proof}

\begin{lemma}\label{lemma:invariantSections}
For $l' \gg m' \gg 0$ and $a \gg 0$, there is an integer $\nu > 0$ and a $G$-invariant section in $\sheaf{L}_{l',m'}^\nu$ that does not vanish at $s_0$.
\end{lemma}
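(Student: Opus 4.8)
The plan is to produce the non-vanishing $G$-invariant section by pulling back a suitable section from the Simpson moduli space $M_{X^{(d-1)}}$ and tracing it through the diagram already set up above. First I would apply Corollary~\ref{lemma:completeRestriction} with $S_1$ the $G$-orbit of $s_0$ and $S_2,\ldots,S_k$ the irreducible components of $S$, using Remark~\ref{remark:avoidingPoints} to arrange that the resulting $G$-stable subscheme $T$ avoids the orbit of $s_0$; crucially this requires that the fibre $\sheaf{E}_{s_0}$, after restriction to a general smooth complete intersection $X^{(d-1)}$ of $d-1$ divisors of large degree $a$, remains $\hat\mu$-semistable of dimension one. That preservation is exactly where the restriction theorem of Section~\ref{section:RestrictionThms} (Theorem~\ref{thm:RestrictionSemi} / its corollary, applied $d-1$ times) enters: for $a\gg 0$ a general $X^{(d-1)}$ keeps $\sheaf{E}_{s_0}|_{X^{(d-1)}}$ slope-semistable. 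On a one-dimensional scheme slope-semistability is Gieseker-semistability, so by Lemma~\ref{lemma:gitSs} the corresponding quotient lies in the GIT-semistable locus $R_{X^{(d-1)}}$, i.e. $\Phi_\sheaf{G}$ sends the fibre $\pi^{-1}(s_0)$ into $R_{X^{(d-1)}}$.

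Next I would invoke Simpson's construction: for $l'\gg m'\gg 0$ the line bundle $H_{l'}$ descends to an ample line bundle $\sheaf{A}_{M}$ on $M_{X^{(d-1)}}$, and the point of $M_{X^{(d-1)}}$ representing $\gr_{JH}(\sheaf{E}_{s_0}|_{X^{(d-1)}})$ admits, after passing to a power, a section of $\sheaf{A}_{M}$ that does not vanish there. Pulling this section back to $\overline{R_{X^{(d-1)}}}$ via the good quotient $R_{X^{(d-1)}}\to M_{X^{(d-1)}}$ yields a $G'$-invariant section $\sigma \in H^0(\Quot_{X^{(d-1)}}, H_{l'}^{\nu'})^{G'}$ (extended by the closed immersion $\varphi_{l'}$ into the Grassmannian, using that sections of powers of $\sheaf{O}_{\Grass}(1)$ restrict) that is non-zero at the image point $\Phi_\sheaf{G}(x)$ for any $x\in\pi^{-1}(s_0)$. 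Applying the map $\Gamma_\sheaf{E}$ constructed just above — that is, restrict $\sigma$ to $Q$ to get $\sigma_Q$ via Lemma~\ref{lemma:lePotier}, pull back by $\Phi_\sheaf{G}$, descend along the good quotient $\pi$ to a section over $S\setminus T$, and extend over $S$ by Lemma~\ref{lemma:extension} (having arranged $T$ to contain none of the $S'_j$ and to meet each component of $S$ in codimension $\geq 2$) — produces $\overline\sigma \in H^0(S,\sheaf{L}_{l',m'}^{\nu})^G$.

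It then remains to check $\overline\sigma(s_0)\neq 0$. This follows by chasing the non-vanishing through each arrow: $\sigma$ is non-zero at $\Phi_\sheaf{G}(x)$, hence $\Phi_\sheaf{G}^*(\sigma_Q)$ is non-zero at $x\in\pi^{-1}(s_0)$; since this section is a pullback along the good quotient $\pi$ of $\overline\sigma|_{S\setminus T}$ and $\pi^{-1}(s_0)$ is non-empty, $\overline\sigma$ is non-zero at $s_0$; finally the extension of Lemma~\ref{lemma:extension} agrees with $\overline\sigma|_{S\setminus T}$ on $S\setminus T$, and since $s_0\notin T$ the value at $s_0$ is unchanged. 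Choosing $\nu$ to be the common multiple of the various powers introduced gives the statement.

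The main obstacle I expect is bookkeeping the interdependence of the large parameters: the restriction theorem fixes how large $a$ must be to preserve semistability of $\sheaf{E}_{s_0}$, Simpson's construction fixes how large $l'\gg m'$ must be for $H_{l'}$ to descend and for $R_{X^{(d-1)}}$ to be the GIT-semistable locus, Lemma~\ref{lemma:quotEmbedding} fixes $l'$ so that $\varphi_{l'}$ embeds $\Quot_{X^{(d-1)}}$ for \emph{every} complete intersection of that degree, and Corollary~\ref{lemma:completeRestriction} / Lemma~\ref{lemma:extension} impose genericity on $X^{(d-1)}$ and the avoidance conditions on $T$ — all of these must be made simultaneously and $G$-equivariantly, and one must be careful that $m'$-regularity of $\sheaf{G}_s$ holds uniformly over $S\setminus T$ so that $\mathcal{S}$ and the quotient $\sheaf{O}_\mathcal{S}\otimes\sheaf{H}'\to\pi^*\sheaf{G}\otimes\sheaf{O}_\pi(1)$ exist. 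The geometry of each step is already in place above; the care is in asserting a single choice of $(a,m',l',\nu,X^{(d-1)},T)$ that satisfies everything at once.
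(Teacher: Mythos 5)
Your proposal is correct and follows essentially the same route as the paper: restrict to a general complete intersection $X^{(d-1)}$ chosen via Corollary~\ref{lemma:completeRestriction} so that $T$ avoids the orbit of $s_0$, use Theorem~\ref{thm:RestrictionSemi} and Lemma~\ref{lemma:gitSs} to place $\sheaf{E}_{s_0}|_{X^{(d-1)}}$ in the GIT-semistable locus, take a $G'$-invariant section of a power of $H_{l'}$ not vanishing there, and push it through the map $\Gamma_{\sheaf{E}}$ to obtain the desired section of $\sheaf{L}_{l',m'}^\nu$. The paper's proof is just a terser version of this, leaving the descent, extension and non-vanishing chase implicit in the construction of $\Gamma_{\sheaf{E}}$; your detour through the ample bundle on $M_{X^{(d-1)}}$ is an equivalent reformulation of GIT-semistability and your remarks on the uniformity of the parameters $(a,m',l',\nu)$ are exactly the bookkeeping the paper handles in the surrounding discussion.
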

\begin{proof}
By Theorem~\ref{thm:RestrictionSemi}, for $a \gg 0$ and a general smooth complete intersection $X^{(d-1)}$, the restriction $\sheaf{E}_{s_0}|_{X^{(d-1)}}$ remains $\hat{\mu}$-semistable, and thus also GIT-semistable by Lemma~\ref{lemma:gitSs}. Then there exists an integer $\nu' > 0$ and a $G'$-invariant section $\sigma \in H^0(\Quot_{X^{(d-1)}}, H_{l'}^{\otimes \nu'})^{G'}$ non-vanishing at the closed point corresponding to $\sheaf{E}_{s_0}|_{X^{(d-1)}}$ in $\Quot_{X^{(d-1)}}$. By construction, the $G$-invariant section $\Gamma_{\sheaf{E}}(\sigma)$ will not vanish at $s_0 \in S$.
\end{proof}

Putting together the above considerations, we conclude:

\begin{theorem}\label{thm:semiAmpleness}
Let $S$ be a weakly normal algebraic $G$-variety and $\sheaf{E}$ a $G$-equivariant $S$-flat family of $\hat{\mu}$-semistable sheaves of class $c$ on $X$. Then, for $l' \gg m' \gg 0$ and $a \gg 0$, there is an integer $\nu > 0$ such that the line bundle $\sheaf{L}_{l',m'}^\nu$ is generated by $G$-invariant global sections over $S$.
\end{theorem}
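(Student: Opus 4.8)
The plan is to argue pointwise: to show that $\sheaf{L}_{l',m'}^\nu$ is globally generated by $G$-invariant sections, it suffices to produce, for each closed point $s_0 \in S$, a $G$-invariant section of some power of $\sheaf{L}_{l',m'}$ not vanishing at $s_0$, and then to fix a single $\nu$ that works uniformly. The individual non-vanishing statement is precisely Lemma~\ref{lemma:invariantSections}, whose proof rests on the whole machinery developed in this section: the restriction theorem (Theorem~\ref{thm:RestrictionSemi}) guarantees that for $a \gg 0$ a general smooth complete intersection $X^{(d-1)}$ of divisors in $\Pi_a$ has the property that $\sheaf{E}_{s_0}|_{X^{(d-1)}}$ stays $\hat{\mu}$-semistable; Lemma~\ref{lemma:gitSs} translates this into GIT-semistability with respect to $H_{l'}$; Simpson's construction then furnishes a $G'$-invariant section $\sigma$ of $H_{l'}^{\nu'}$ non-vanishing at the point of $\Quot_{X^{(d-1)}}$ corresponding to $\sheaf{E}_{s_0}|_{X^{(d-1)}}$; and finally the map $\Gamma_{\sheaf{E}}$ constructed above (pull back $\sigma$ along $\Phi_{\sheaf{G}}$, descend along the good quotient $\pi$ to $S \setminus T$ using Lemma~\ref{lemma:lePotier} and the chain of isomorphisms identifying $\Phi_{\sheaf{G}}^*\lambda_{\sheaf{F}'}(w_{l',m'}|_{X^{(d-1)}})$ with $\pi^*\sheaf{L}_{l',m'}$, then extend across $T$ using weak normality and Lemma~\ref{lemma:extension}) transports it to a $G$-invariant section of $\sheaf{L}_{l',m'}^\nu$ on all of $S$ still non-vanishing at $s_0$.

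The remaining point, which is where some care is needed, is uniformity in $s_0$. Running the above for each $s_0$ produces sections of $\sheaf{L}_{l',m'}^{\nu(s_0)}$ with $\nu(s_0)$ a priori depending on $s_0$ (and on the choice of complete intersection, and on $\nu'$). First I would note that $S$ is Noetherian, so quasi-compact: the non-vanishing loci of these sections form an open cover of $S$, and a finite subcover $s_0^{(1)},\dots,s_0^{(N)}$ suffices. Setting $\nu$ to be a common multiple of $\nu(s_0^{(1)}),\dots,\nu(s_0^{(N)})$ and replacing each section by an appropriate power, we obtain finitely many $G$-invariant sections of the single line bundle $\sheaf{L}_{l',m'}^\nu$ whose non-vanishing loci cover $S$, which is exactly the assertion of $G$-semiampleness. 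One should also check that the constraints ``$a \gg 0$'' and ``$l' \gg m' \gg 0$'' can be met simultaneously for all the chosen points: the bound on $a$ from the restriction theorem depends only on the discriminant data of $c|_{X^{(d-j)}}$ (which is controlled uniformly, as $\sheaf{E}$ is a bounded family and Theorem~\ref{thm:StableRestriction} gives an effective bound), while the bounds on $l', m'$ come from Serre vanishing, Grothendieck's embedding (Lemma~\ref{lemma:quotEmbedding}), and Simpson's construction, all applied to the fixed class $c$; none of these depends on the individual $s_0$.

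I expect the main obstacle to be bookkeeping rather than a deep difficulty: making sure that the single complete intersection (or rather, the single open subset $U \subset \Pi_a$ of admissible complete intersections provided by Corollary~\ref{lemma:completeRestriction}) can be chosen so that \emph{all} the properties needed simultaneously hold — flatness of $\sheaf{G}$ over $S \setminus T$ with $T \cap S_i$ of codimension $\geq 2$, $\sheaf{E}_s$-regularity of $\underline{D}$ off $T$, $m'$-regularity of $\sheaf{G}_s$, $\hat{\mu}$-semistability of $\sheaf{E}_{s_0}|_{X^{(d-1)}}$, and $T$ avoiding the finitely many subvarieties $S'_j$ of Lemma~\ref{lemma:extension} as well as the orbit $S_1 = G\cdot s_0$ — and that these conditions are all open (or generic) conditions on the complete intersection, so that a general member works. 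Since the number of bad conditions is finite and each excludes a proper closed (or meager) locus in $\Pi_a^{d-1}$, a general complete intersection satisfies all of them, and the proof goes through as indicated; the theorem then follows by assembling Lemmas~\ref{lemma:invariantSections} and the quasi-compactness argument above.
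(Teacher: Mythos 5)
Your proposal is correct and follows essentially the same route as the paper: the paper's own proof of Theorem~\ref{thm:semiAmpleness} consists precisely of invoking Lemma~\ref{lemma:invariantSections} at each closed point and then using the Noetherian property of $S$ to extract a single power $\nu$ that works uniformly. Your additional remarks on the uniformity of the bounds for $a$, $l'$, $m'$ and on choosing the complete intersection generically enough are accurate elaborations of details the paper leaves implicit in the discussion preceding the theorem.
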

\begin{proof}

By Lemma~\ref{lemma:invariantSections}, for any closed point $s \in S$ and $a \gg 0$, there exists a large enough integer $\nu(s) > 0$ and a $G$-invariant section in $\sheaf{L}_{l',m'}^{\nu(s)}$ that does not vanish at $s$. Since $S$ is Noetherian, we may choose a large enough power $\nu$ that works for all $s \in S$.
\end{proof}

If $d = 2$, there is no need to assume the weak normality hypothesis on $S$. Indeed, as we have noticed in Remark~\ref{remark:avoidingPoints}, in this case we may choose $T = \emptyset$. We obtain the following result:

\begin{theorem}\label{thm:semiAmpleness2}
Let $S$ be an algebraic $G$-variety and $\sheaf{E}$ a $G$-equivariant $S$-flat family of $\hat{\mu}$-semistable $2$-dimensional sheaves of class $c$ on $X$. Then, for $l' \gg m' \gg 0$ and $a \gg 0$, there is an integer $\nu > 0$ such that the line bundle $\sheaf{L}_{l',m'}^\nu$ is generated by $G$-invariant global sections over $S$.
\end{theorem}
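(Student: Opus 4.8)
The approach is simply to rerun the construction of Section~\ref{subsect:semiamplenessTheorems}, observing that when $d=2$ the exceptional locus $T$ may be taken empty, so that the extension step (Lemma~\ref{lemma:extension}) — the only place where weak normality enters — is never needed. Concretely, for $d=2$ a smooth complete intersection $X^{(d-1)}$ is a single smooth divisor $D\in\Pi_a$. Since $c$ has dimension $2$, every fibre $\sheaf{E}_s$ is pure of dimension $2$ and hence has no zero-dimensional associated points; so Lemma~\ref{lemma:flatRestriction} — equivalently Corollary~\ref{lemma:completeRestriction} with $T=\emptyset$, cf.\ Remark~\ref{remark:avoidingPoints}(3) — shows that for $a\gg0$ and a general $D\in\Pi_a$ the divisor $D$ is $\sheaf{E}_s$-regular for \emph{every} $s\in S$, and $\sheaf{G}\coloneqq\sheaf{E}|_{S\times D}$ is an $S$-flat family of $1$-dimensional sheaves on the smooth curve $D$ (the fibrewise Hilbert polynomial $P(\sheaf{E}_s,m)-P(\sheaf{E}_s,m-a)$ has degree exactly $1$).

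Next I would carry out the construction of Section~\ref{subsect:semiamplenessTheorems} verbatim, with $S\setminus T$ replaced by $S$. For $m'\gg0$ each $\sheaf{G}_s$ is $m'$-regular, so ${p_1}_*(\sheaf{G}(m'))$ is a locally free $G$-equivariant $\sheaf{O}_S$-module of rank $P'(m')$; let $\pi:\mathcal{S}\to S$ be the associated projective frame bundle, carrying compatible $G$- and $G'$-actions with $\pi$ a good quotient, and let $\Phi_{\sheaf{G}}:\mathcal{S}\to\Quot_{X^{(1)}}$ be the $G\times G'$-equivariant morphism given by the universal quotient, whose image lies in the open subscheme $Q$. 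The chain of isomorphisms of Section~\ref{subsect:semiamplenessTheorems} (using \cite[Lem.~8.1.2]{huybrechts2010geometry} and Lemma~\ref{lemma:Giso}) yields a $G\times G'$-equivariant isomorphism $\Phi_{\sheaf{G}}^{*}\bigl(\lambda_{\sheaf{F}'}(w_{l',m'}|_{X^{(1)}})\bigr)\cong\pi^{*}\sheaf{L}_{l',m'}$. Given a $G'$-invariant section $\sigma\in H^{0}(\Quot_{X^{(1)}},H_{l'}^{\nu'})^{G'}$, I restrict it to $Q$, use Lemma~\ref{lemma:lePotier} to regard it (up to a positive power) as a $G'$-invariant section of $\lambda_{\sheaf{F}'}(w_{l',m'}|_{X^{(1)}})$, pull back by $\Phi_{\sheaf{G}}$, and descend along the good quotient $\pi$; because $T=\emptyset$ this directly produces a $G$-invariant section of $\sheaf{L}_{l',m'}^{\nu}$ over \emph{all} of $S$, giving the comparison map $\Gamma_{\sheaf{E}}:H^{0}(\Quot_{X^{(1)}},H_{l'}^{\nu'})^{G'}\to H^{0}(S,\sheaf{L}_{l',m'}^{\nu})^{G}$ — with no extension across a codimension-$\geq2$ locus required.

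To finish, I would prove global generation pointwise exactly as in Lemma~\ref{lemma:invariantSections}: fix a closed point $s_0\in S$; by Theorem~\ref{thm:RestrictionSemi}, for $a\gg0$ and a general smooth $D\in\Pi_a$ the restriction $\sheaf{E}_{s_0}|_{D}$ stays $\hat{\mu}$-semistable, hence GIT-semistable by Lemma~\ref{lemma:gitSs}, so for $l'\gg m'\gg0$ there is $\nu'>0$ and a $G'$-invariant section of $H_{l'}^{\nu'}$ nonvanishing at $[\sheaf{E}_{s_0}|_{D}]$, whose image under $\Gamma_{\sheaf{E}}$ does not vanish at $s_0$. Since $S$ is Noetherian, finitely many such sections cover $S$, and raising to a common power gives $\nu>0$ with $\sheaf{L}_{l',m'}^{\nu}$ globally generated by $G$-invariant sections. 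The only point that needs genuine care — and the thing I would single out as the ``main obstacle'' — is verifying that the constructions of Section~\ref{subsect:semiamplenessTheorems}, originally set up over the open subset $S\setminus T$, really do go through over all of $S$ when $d=2$: that a single general $D\in\Pi_a$ is $\sheaf{E}_s$-regular simultaneously for all $s$ and that the resulting $\sheaf{G}$ is a flat family of $1$-dimensional sheaves with $m'$-regular fibres, so that the frame bundle $\mathcal{S}$ and the morphism $\Phi_{\sheaf{G}}$ are defined globally. This is precisely Lemma~\ref{lemma:flatRestriction} together with Remark~\ref{remark:avoidingPoints}(3); once it is in place the remainder is a formal repetition of the proof of Theorem~\ref{thm:semiAmpleness}.
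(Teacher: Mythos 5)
Your proposal is correct and follows exactly the paper's own argument: the proof of Theorem~\ref{thm:semiAmpleness2} consists precisely of observing (via Remark~\ref{remark:avoidingPoints} and Lemma~\ref{lemma:flatRestriction}) that for a family of pure $2$-dimensional sheaves one may take $T=\emptyset$ in Corollary~\ref{lemma:completeRestriction}, so the weak-normality extension step of Lemma~\ref{lemma:extension} is never invoked and the construction of Section~\ref{subsect:semiamplenessTheorems} together with Lemma~\ref{lemma:invariantSections} runs over all of $S$. Your write-up simply spells out the details that the paper leaves implicit.
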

\section{$G$-properness, relative $G$-normalization and Iitaka $G$-fibration}\label{sect:Gproperness}

In this section, fix a connected algebraic group $G$ over $\C$. Before we proceed with the construction of the moduli space, we make a short detour to introduce the notion of $G$-properness and define what we call the relative $G$-normalization. We then construct the Iitaka $G$-fibration of a $G$-proper scheme endowed with a $G$-semiample line bundle. In a sense, we generalize the results described by Lazarsfeld in \cite[Ch.~2]{lazarsfeld2017positivity} to the $G$-equivariant setting.

In what follows, we work over the category $\schC$ of schemes of finite type over $\C$ and so every morphism will be of finite type.

\begin{definition}\label{def:Gproper}
Let $R$ be a $G$-scheme and $f : R \to S$ a $G$-invariant morphism. We say that $f$ is $G$-\textit{universally closed} if for every commutative diagram
\begin{center}
\begin{tikzcd}[column sep=normal]
\Spec(K) \ar[d,"j"] \ar[r,"g"] &  R \ar[d,"f"] \\  \Spec(A) \ar[ru,dashed,"i"] \ar[r,"h"] & S
\end{tikzcd}
\end{center}
where $A$ is a discrete valuation ring over $\C$ of quotient field $K$, there exists a morphism $i : \Spec(A) \to R$ such that $i \circ j$ and $g$ differ by a group element in $G(K)$. If in addition $f$ is also separated, we call $f$ a $G$-\textit{proper} morphism.
\end{definition}

\begin{remark}\label{remark:Gproperties}
When $G$ acts trivially we recover the classical notion of properness on schemes. As in the classical setting, one can easily check the following properties of $G$-properness:
\begin{enumerate}[nosep]
  \item $G$-proper morphisms are stable under base changes $S' \to S$.
  \item If $f : R \to Z$ and $g : Z \to S$ are two morphisms such that $g  \circ f$ is $G$-proper, $f$ is $G$-invariant and $g$ is separated, then $f$ is $G$-proper.
  \item If $f : R \to S$ is $G$-proper and $g : S \to T$ is proper, then $g \circ f$ is also $G$-proper.
\end{enumerate}
Similar properties hold also for $G$-universally closed morphisms.
\end{remark}

\begin{lemma}\label{lemma:closedImage}
Given a commutative diagram
\begin{center}
\begin{tikzcd}[column sep=small]
R \arrow[rr,"f"] \arrow[rd,"p"] & & S \arrow[dl,"q"']  \\
    & T &
\end{tikzcd}
\end{center}
such that $p$ is $G$-universally closed, $f$ is $G$-invariant and $q$ is separated, then the image of $f$ is proper over $T$.
\end{lemma}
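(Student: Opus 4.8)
The plan is to pass to the reduced closed subscheme $Z \coloneqq \overline{f(R)} \subseteq S$, with the induced morphism $f' : R \to Z$ (which has dense image) and the induced separated finite-type morphism $q' \coloneqq q|_Z : Z \to T$ satisfying $q'\circ f' = p$. It then suffices to prove (i) that $f'$ is surjective, so that $f(R) = Z$ is closed in $S$, and (ii) that $q'$ is universally closed, so that, being also separated and of finite type, it is proper over $T$. Both will be deduced from one auxiliary observation, which I regard as the main point: \emph{if $\phi : M \to N$ is $G$-universally closed and $C \subseteq M$ is a $G$-stable closed subscheme, then $\phi|_C : C \to N$ is again $G$-universally closed, and in particular $\phi(C)$ is closed in $N$.} To prove it one checks the valuative criterion of Definition~\ref{def:Gproper}: given a DVR $A$ over $\C$ with fraction field $K$ and compatible maps $g\colon\Spec(K) \to C$ and $h\colon\Spec(A) \to N$, one lifts through $\phi$ to some $i : \Spec(A) \to M$ with $\phi\circ i = h$; since $C$ is $G$-stable and $i|_{\Spec(K)}$ agrees with $g$ up to $G(K)$, the generic point of $i$ lands in $C$, and $\Spec(A)$ being reduced, $i$ factors through $C$. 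The "in particular" then follows because $\phi(C)$ is constructible (Chevalley, as all morphisms are of finite type) and stable under specialization: any specialization in $N$ is realized by a DVR after a suitable extension, its generic point lifts into $C$ (the fibre over that point being non-empty), and the resulting lift satisfies $\phi\circ i = h$ on the nose, so its closed point maps to the prescribed specialization.

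For (i), I would form the graph $\Gamma : R \to R\times_T Z$ of $f'$. Because $q'$ is separated, $\Gamma$ is a closed immersion, and because $p = q\circ f$ is $G$-invariant (as $f$ is), the fibre product $R\times_T Z$ carries a $G$-action through its first factor for which $\Gamma$ is equivariant and $\Gamma(R)$ is a $G$-stable closed subscheme isomorphic to $R$. The second projection $\pi_Z : R\times_T Z \to Z$ is the base change of $p$ along $q'$, hence $G$-universally closed by Remark~\ref{remark:Gproperties}(1) (and the analogous statement for $G$-universally closed morphisms). Applying the auxiliary observation to $\pi_Z$ and $\Gamma(R)$, and transporting along the isomorphism $\Gamma$, gives that $f' = \pi_Z\circ\Gamma$ is $G$-universally closed and that $f'(R)$ is closed in $Z$; since $f'(R)$ is dense in $Z$, it follows that $f'(R) = Z$, i.e.\ $f$ is onto $Z$.

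For (ii), given any morphism $T' \to T$ and a closed subset $C_0 \subseteq Z\times_T T'$, I would pull it back along the base change $g' \coloneqq f'\times\mathrm{id}_{T'} : R\times_T T' \to Z\times_T T'$. This $g'$ is surjective (base change of the surjection $f'$) and $G$-invariant for the $G$-action on $R\times_T T'$ through the first factor, so $C \coloneqq (g')^{-1}(C_0)$ is a $G$-stable closed subset with $g'(C) = C_0$; hence the images of $C$ and of $C_0$ in $T'$ coincide. The projection $R\times_T T' \to T'$ is again the base change of $p$, hence $G$-universally closed, so by the auxiliary observation the image of $C$ — and therefore of $C_0$ — is closed in $T'$. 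As $T'$ was arbitrary, $q' : Z\to T$ is universally closed, hence proper, and since $f(R) = Z$ this is precisely the assertion that the image of $f$ is proper over $T$. The only genuine obstacle is the auxiliary observation — namely, checking that the $G(K)$-ambiguity inherent in $G$-universal closedness does not prevent images of $G$-stable closed sets from being closed; once that is in place, everything else is a formal manipulation of graphs and base changes via Remark~\ref{remark:Gproperties}.
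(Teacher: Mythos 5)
Your overall architecture (pass to the closure $Z$ of the image, prove surjectivity onto $Z$ via the graph trick, prove universal closedness of $Z \to T$ by base change) is a legitimate alternative to the paper's route, which instead invokes Nagata's compactification theorem to replace $S$ by a proper $T$-scheme and then checks directly that the image of $f$ is stable under specialization. However, there is a genuine gap at exactly the point you yourself flag as ``the only genuine obstacle.'' In the auxiliary observation you assert that the lift $i\colon \Spec(A) \to M$ produced by $G$-universal closedness ``satisfies $\phi\circ i = h$ on the nose.'' This is not part of Definition~\ref{def:Gproper}: the definition only controls $i\circ j$ against $g$ up to an element of $G(K)$, and says nothing about $\phi\circ i$ versus $h$. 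For an arbitrary $G$-universally closed $\phi$ --- and your observation imposes no further hypothesis on $\phi$ --- the maps $\phi\circ i$ and $h$ need not even agree at the generic point, since $\phi\circ i\circ j = \phi\circ(g\cdot\gamma)$ differs from $\phi\circ g = h\circ j$ unless $\phi$ is $G$-invariant. Even granting $G$-invariance (which does hold for the projections you apply the observation to), one only obtains agreement of $\phi\circ i$ and $h$ on $\Spec(K)$, and one must still invoke separatedness of the target, via the valuative criterion, to upgrade this to $\phi\circ i = h$ and thereby guarantee that the closed point of $\Spec(A)$ lands on the prescribed specialization $s_0$ rather than on some other specialization of $s_1$. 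This reconciliation of the $G(K)$-ambiguity with the prescribed specialization is the entire content of the lemma; it is precisely where the paper uses both the $G$-invariance of $f$ and the separatedness of $q$, and your write-up supplies neither ingredient.

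The surrounding formal steps are correct: the graph $\Gamma$ is a $G$-equivariant closed immersion because $q'$ is separated; base changes of $p$ remain $G$-universally closed by Remark~\ref{remark:Gproperties}; and the lift does factor through a $G$-stable closed subscheme because $\Spec(A)$ is reduced and irreducible. The gap is also fillable: restate the auxiliary observation with the additional hypotheses that $\phi$ is $G$-invariant and that $N$ is separated over a base compatible with the given data, and then run the paper's two-line argument ($G$-invariance gives agreement of $\phi\circ i$ and $h$ at the generic point, separatedness upgrades it to equality). Once repaired, your proof trades Nagata compactification for the graph and base-change bookkeeping; the paper's argument is shorter because it verifies a single specialization statement directly, applying Remark~\ref{remark:Gproperties} to see that $f$ itself is $G$-universally closed.
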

\begin{proof}
By Nagata's Compactification Theorem, we may assume that $S$ is a proper scheme over $T$ and reduce the problem to showing that $\Im(f)$ is a closed subscheme of $S$. Since $f$ is of finite type, it is enough to show that $\Im(f)$ is stable under specialization, cf. \cite[Ch.~II, Lem.~4.5]{hartshorne1977algebraic}. So let $x_1 \in R$ be a point and consider a specialization $s_0 \in S$ of its image $s_1 = f(x_1)$. Denote by $\sheaf{O}$ the local ring of $s_0$ on $\overline{\{s_1\}}$ with its reduced induced structure. Then the quotient field of $\sheaf{O}$ is $k(s_1)$, and we have a finitely generated field extension $k(s_1) \subset k(x_1)$. Set $K \coloneqq k(x_1)$ and consider a discrete valuation ring $A$ of $K$ dominating $\sheaf{O}$, which exists cf. \cite[Ch.~II, Ex.~4.11]{hartshorne1977algebraic}. This induces a commutative diagram
\begin{center}
\begin{tikzcd}
\Spec(K) \arrow[r,"g"] \arrow[d,"j"]
& R \arrow[d,"f"] \\
\Spec(A) \arrow[r,"h"]
& S
\end{tikzcd}
\end{center}
such that $h: \Spec(A) \to S$ sends the generic point of $A$ to $s_1$ and its closed point to $s_0$. By Remark~\ref{remark:Gproperties}, $f$ is a $G$-universally closed morphism. Thus there exists a morphism $i : \Spec(A) \to R$ such that $i \circ j$ and $g$ differ by a group element in $G(K)$. As $f$ is $G$-invariant, it follows that $(q \circ f \circ i)|_{\Spec(K)}= (q \circ h)|_{\Spec(K)}$. But $q$ is separated, so $f \circ i = h$, which shows that $s_0$ is in the image of $f$.
\end{proof}

Next we introduce the notion of relative $G$-normalization, which is an equivariant version of the so-called relative normalization (see \cite[Tag~035H]{stacks-project}). 

\begin{definition}\label{Def:Normalization}
Let $R$ be a $G$-scheme and $f: R \to S$ a $G$-invariant morphism. If $\sheaf{O}'$ is the integral closure of $\sheaf{O}_S$ in $(f_* \sheaf{O}_R)^G$, then we call the $S$-scheme
\begin{align*}
\pi : S' = \underline{\Spec}_S\!\left(\sheaf{O}'\right) \to S
\end{align*}
the $G$-\textit{normalization} of $S$ in $R$. It comes equipped with a natural factorization of $f$ given by
\begin{align*}
R \xrightarrow{f'} S' \xrightarrow{\pi} S,
\end{align*}
where $f'$ is the composition of the canonical morphism $R \to \underline{\Spec}_S\!\left((f_*\sheaf{O}_R)^G \right)$ and the morphism of relative spectra coming from the inclusion map
$\sheaf{O}' \to (f_*\sheaf{O}_R)^G$. Note that in this case $f'$ is a $G$-invariant morphism and $\pi$ is integral by definition.
\end{definition}

As expected, the just defined normalization satisfies a universal property as stated below.

\begin{prop}
Under the same notation as in Definition~\ref{Def:Normalization}, the $G$-normalization $\pi: S' \to S$ satisfies the following universal property: for any other factorization $R \xrightarrow{g} Z \xrightarrow{\nu} S$ with $g$ a $G$-invariant morphism and $\nu$ integral, there exists a unique morphism $h: S' \to  Z$ making the diagram
\begin{center}
\begin{tikzcd}[column sep=normal]
R \ar[d,"f'"] \ar[r,"g"] &  Z \ar[d,"\nu"] \\  S' \ar[ru,"h"] \ar[r,"\pi"] & S
\end{tikzcd}
\end{center}
commute.
\end{prop}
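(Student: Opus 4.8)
The plan is to reduce to the case where $S$ is affine and then recognize the assertion as the universal property of integral closure in commutative algebra. Formation of $(f_*\sheaf{O}_R)^G$, of its integral closure in $\sheaf{O}_S$, and of the relative spectrum all commute with restriction to open subschemes of $S$; moreover, once uniqueness is known over affine opens, the local solutions automatically agree on overlaps and glue. So write $S = \Spec(B)$; since $f$ is of finite type, $f_*\sheaf{O}_R$ is quasi-coherent, and with $A \coloneqq \Gamma\big(S,(f_*\sheaf{O}_R)^G\big) = \Gamma(R,\sheaf{O}_R)^G$ we have $S' = \Spec(B')$ for $B'$ the integral closure of the image of $B$ in $A$, with $f'$ corresponding to the chain $B \to B' \hookrightarrow A \hookrightarrow \Gamma(R,\sheaf{O}_R)$. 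Because $\nu$ is integral it is affine, so $Z = \Spec(C)$ for some $B$-algebra $C$ that is integral over $B$.

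Next I would translate the remaining data into ring homomorphisms. The morphism $g$ gives $g^\# : C \to \Gamma(R,\sheaf{O}_R)$, and $G$-invariance of $g$ says exactly that $a^\# g^\# = p^\# g^\#$ for the action $a$ and projection $p$ on $G \times R$, i.e. that $g^\#$ takes values in $\Gamma(R,\sheaf{O}_R)^G = A$; using $\nu\circ g = f$ this factors through a $B$-algebra homomorphism $\bar g : C \to A$ followed by $A \hookrightarrow \Gamma(R,\sheaf{O}_R)$. Since $C$ is integral over $B$, every element of $\bar g(C)$ is integral over $B$, hence lies in $B'$; thus $\bar g$ factors as $C \xrightarrow{h^\#} B' \hookrightarrow A$ for a unique $B$-algebra map $h^\#$, and we let $h : S' = \Spec(B') \to \Spec(C) = Z$ be the induced morphism.

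Finally I would verify the diagram and uniqueness. As $h^\#$ is a map of $B$-algebras, $\nu \circ h = \pi$; and $f'^\# \circ h^\#$ is the composite $C \xrightarrow{h^\#} B' \hookrightarrow A \hookrightarrow \Gamma(R,\sheaf{O}_R)$, which by construction of $h^\#$ equals $C \xrightarrow{\bar g} A \hookrightarrow \Gamma(R,\sheaf{O}_R) = g^\#$, so $h \circ f' = g$. If $h'$ is another morphism with $\nu \circ h' = \pi$ and $h' \circ f' = g$, then $h'^\# : C \to B'$ is a $B$-algebra map with $f'^\# \circ h'^\# = g^\#$; since $f'^\# : B' \to \Gamma(R,\sheaf{O}_R)$ is injective (being $B' \hookrightarrow A \hookrightarrow \Gamma(R,\sheaf{O}_R)$), this forces $h'^\# = h^\#$, hence $h' = h$. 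Gluing over an affine cover of $S$ then yields the global $h$. The only point requiring care is the bookkeeping that taking $G$-invariants, integral closure and relative $\Spec$ all commute with localization on $S$ and that $\Gamma\big(S,(f_*\sheaf{O}_R)^G\big) = \Gamma(R,\sheaf{O}_R)^G$; once these are settled there is no real obstacle, the content being the familiar universal property of integral closure.
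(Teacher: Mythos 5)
Your proof is correct and follows essentially the same route as the paper, which simply defers to the non-equivariant argument of \cite[Tag 035I]{stacks-project}; you have written out that reduction-to-affine / universal-property-of-integral-closure argument in full, correctly identifying the one genuinely equivariant point, namely that $G$-invariance of $g$ forces $g^\#$ to land in $\Gamma(R,\sheaf{O}_R)^G$ so that the integral closure is taken inside the invariant subalgebra. Nothing further is needed.
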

\begin{proof}
The proof goes as in the non-equivariant case, see \cite[Tag 035I]{stacks-project}.
\end{proof}

The following is an equivariant version of the Stein factorization (see \cite[Tag~03H0]{stacks-project}). 

\begin{prop}\label{Prop:normalization}
Assume that $G$ is reductive and let $f : R \to S$ be a $G$-proper surjective morphism. Then the $G$-normalization of $S$ in $R$ exists and gives a factorization
\begin{center}
\begin{tikzcd}[column sep=normal]
R \arrow{r}{f'}  \arrow{rd}{f} 
  & S' \arrow{d}{\pi} \\
    & S
\end{tikzcd}
\end{center}
with the following properties:
\begin{enumerate}[(i),nosep]
\item $(f '_* \sheaf{O}_{R})^{G} = \sheaf{O}_{S'}$ and $S' = \underline{\Spec}_S\!\left((f_* \sheaf{O}_{R})^{G}\right)$,
\item the morphism $\pi : S' \to S$ is finite,
\item the morphism $f'$ is a surjective $G$-invariant morphism.
\end{enumerate}
\end{prop}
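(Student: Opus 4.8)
The plan is to reduce the statement to the known non-equivariant Stein factorization (\cite[Tag~03H0]{stacks-project}) by carefully controlling the sheaf of invariants $(f_*\sheaf{O}_R)^G$. First I would show that $\sheaf{A} \coloneqq (f_*\sheaf{O}_R)^G$ is a \emph{coherent} $\sheaf{O}_S$-algebra. For this I would use that $f$ is $G$-proper, hence its image is proper over $S$ by Lemma~\ref{lemma:closedImage} — in fact, since $f$ is surjective, we may work over $S$ directly. The crucial point is that $G$-properness combined with reductivity of $G$ should allow one to invoke a version of the theorem on formal functions / finiteness: concretely, one can take a (non-equivariant, classical) Stein factorization $R \xrightarrow{g} Z \xrightarrow{\nu} S$ of $f$ with $g_*\sheaf{O}_R = \sheaf{O}_Z$ and $\nu$ proper with finite fibres; one then argues that $\nu$ is in fact finite using that $f$ is $G$-universally closed and $G$ is connected, so the fibres of $\nu$ — which are (images of) quotients of the $G$-orbits inside the fibres of $f$ — are finite. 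Actually the cleanest route is: the $G$-action on $R$ descends to an action on $Z$ (since $\sheaf{O}_Z = g_*\sheaf{O}_R$ is $G$-equivariant and $\nu$ is $G$-invariant as $f$ is), and $(f_*\sheaf{O}_R)^G = (\nu_*\sheaf{O}_Z)^G = \nu_*(\sheaf{O}_Z^G)$; then $\nu$ is proper with finite fibres hence finite, so $\nu_*\sheaf{O}_Z$ is a coherent $\sheaf{O}_S$-algebra, and $(\nu_*\sheaf{O}_Z)^G$ is a coherent subalgebra because $G$ is reductive (Reynolds operator / Nagata-type finiteness on each affine patch of $S$).

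Once coherence of $\sheaf{A} = (f_*\sheaf{O}_R)^G$ is established, the integral closure $\sheaf{O}'$ of $\sheaf{O}_S$ inside $\sheaf{A}$ is again coherent (integral closure of a Noetherian ring in a finite extension is finite in characteristic zero / for schemes of finite type over $\C$), so $\pi : S' = \underline{\Spec}_S(\sheaf{O}') \to S$ is a finite morphism; this gives (ii) and the second half of (i). For (iii), $f'$ is $G$-invariant by construction (Definition~\ref{Def:Normalization}), and it is surjective because $\pi \circ f' = f$ is surjective, hence $f'$ has dense image, and its image is closed: indeed $f'$ is $G$-universally closed (it factors the $G$-universally closed $f$ through the separated $\pi$, so by Remark~\ref{remark:Gproperties}(2) applied to $G$-universally closed morphisms, $f'$ is $G$-universally closed), so its image is a closed subscheme of $S'$ that is also dense, hence all of $S'$.

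The remaining content is the first equality in (i), namely $(f'_*\sheaf{O}_R)^G = \sheaf{O}_{S'}$. Here I would argue as follows: by construction $\underline{\Spec}_S((f_*\sheaf{O}_R)^G)$ receives a $G$-invariant map from $R$ with $\sheaf{O}$-algebra exactly $(f_*\sheaf{O}_R)^G = \sheaf{A}$, and $S'$ is the relative spectrum of the integral closure $\sheaf{O}' \subset \sheaf{A}$. So I need $(f'_*\sheaf{O}_R)^G = \sheaf{O}'$, i.e. the invariants of the \emph{new} map $f' : R \to S'$ recover the algebra we used. Since $f' $ factors as $R \to \underline{\Spec}_S(\sheaf{A}) \to S'$ and the first map has $G$-invariant pushforward equal to $\sheaf{A}$ (over $S'$, pushing forward along the affine morphism $\underline{\Spec}_S(\sheaf{A}) \to S'$), one gets $(f'_*\sheaf{O}_R)^G = $ (the $\sheaf{O}_{S'}$-algebra structure on $\sheaf{A}$ viewed over $S'$). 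But $\sheaf{A}$ is integral over $\sheaf{O}' = \sheaf{O}_{S'}$ (it is a subalgebra of the integral closure — wait, it \emph{contains} $\sheaf{O}'$; one needs $\sheaf{A}$ integral over $\sheaf{O}'$). In fact $\sheaf{O}'$ \emph{is} the integral closure of $\sheaf{O}_S$ in $\sheaf{A}$, and since $\sheaf{A}$ is finite over $\sheaf{O}_S$, every element of $\sheaf{A}$ is integral over $\sheaf{O}_S$ hence lies in $\sheaf{O}'$, so $\sheaf{A} = \sheaf{O}'$ already and there is nothing more to prove — the integral closure step is automatic here because $\sheaf{A}$ is already a finite $\sheaf{O}_S$-algebra, so it is integrally closed in itself trivially and equals $\sheaf{O}'$. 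Thus $(f'_*\sheaf{O}_R)^G = \sheaf{A} = \sheaf{O}' = \sheaf{O}_{S'}$, completing (i).

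The main obstacle, and the step I would spend the most care on, is establishing that $(f_*\sheaf{O}_R)^G$ is coherent over $\sheaf{O}_S$: this is where reductivity of $G$ and $G$-properness both genuinely enter, via the descent of the $G$-action to the (classical) Stein factorization $Z$ and the finiteness of $\nu : Z \to S$ deduced from $G$-universal closedness. One must check that the classical Stein factorization is compatible with the group action (functoriality of $g_*\sheaf{O}_R$) and that $\nu$ has finite fibres — this uses that the fibres of $f$ over a geometric point of $S$ map onto the fibres of $\nu$, and these are finite because $G$-universal closedness forces each such fibre to be a single $G$-orbit closure after quotienting, which a reductive-group GIT-type argument (or simply: $Z \to S$ proper and quasi-finite $\Rightarrow$ finite) handles. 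Everything else is bookkeeping with affine morphisms, relative spectra, and the formal properties in Remark~\ref{remark:Gproperties}.
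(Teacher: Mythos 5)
There is a genuine gap at the step you yourself identify as central: establishing coherence of $\sheaf{A}=(f_*\sheaf{O}_R)^G$ by first taking the \emph{classical} Stein factorization $R\xrightarrow{g}Z\xrightarrow{\nu}S$ and then passing to invariants. The classical Stein factorization with $\nu$ finite (or even with $Z$ of finite type over $S$) requires $f$ to be proper, and here $f$ is only $G$-proper; $f_*\sheaf{O}_R$ itself need not be coherent, so the object $Z$ you want to descend the $G$-action to does not exist in the required form. A concrete counterexample: take $R=G=\SL{2}$ acting on itself by translation and $S=\Spec(\C)$. One checks directly from Definition~\ref{def:Gproper} that $f:G\to\Spec(\C)$ is $G$-proper and surjective, yet $f_*\sheaf{O}_R=\Gamma(G,\sheaf{O}_G)$ is an infinite-dimensional $\C$-vector space, so there is no finite $\nu:Z\to S$ interpolating; only after taking invariants does one get something finite, namely $(f_*\sheaf{O}_G)^G=\C$. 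The order of operations is therefore forced: one must work with $G$-invariant sections from the start. The paper does exactly this: a $G$-invariant section $h\in\Gamma(f^{-1}(U),\sheaf{O}_R)^G$ over an affine $U=\Spec(A)$ is viewed as a $G$-invariant morphism $h:f^{-1}(U)\to\Spec(A[T])$ over $U$; Lemma~\ref{lemma:closedImage} (applied to the $G$-proper $f|_{f^{-1}(U)}$) shows its image is proper \emph{and} affine over $U$, hence integral, producing a monic polynomial killing $h$. Finite generation of $\Gamma(f^{-1}(U),\sheaf{O}_R)^G$ then comes from Nagata's theorem, and integral plus finite type gives finite. Your proposal never produces the monic equation for an invariant section by any route that avoids the nonexistent $Z$.

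Two smaller points. First, your justification of surjectivity of $f'$ (``$\pi\circ f'=f$ surjective, hence $f'$ has dense image'') does not follow: surjectivity of $f$ only gives that $\pi$ restricted to $\Im(f')$ is surjective onto $S$, not that $\Im(f')$ is dense in $S'$. The correct argument (as in the paper) uses that $\Im(f')$ is closed by Lemma~\ref{lemma:closedImage} and that its ideal sheaf $\sheaf{I}\subset\sheaf{O}_{S'}=(f'_*\sheaf{O}_R)^G$ maps to zero in $f'_*\sheaf{O}_R$, forcing $\sheaf{I}=0$. Second, your observation that once $\sheaf{A}$ is finite over $\sheaf{O}_S$ the integral closure step is vacuous, so $\sheaf{O}'=\sheaf{A}$ and $S'=\underline{\Spec}_S(\sheaf{A})$, is correct and consistent with the paper; but it is downstream of the coherence claim and so does not repair the gap.
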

\begin{proof}
Clearly, the $G$-normalization $S'$ of $S$ in $R$ is defined and gives a natural factorization as in the statement. 
To prove $(1)$ and $(2)$, it is enough to show that $(f_* \sheaf{O}_{R})^{G}$ is finite over $\sheaf{O}_S$. We argue locally over an open affine subscheme $U = \Spec(A) \subset S$. Let $h \in \Gamma(f^{-1}(U),\sheaf{O}_{R})^{G}$ be a $G$-invariant section, which corresponds to a $G$-invariant morphism ${h: f^{-1}(U) \to \Spec(A[T])}$ making the diagram
\begin{center}
\begin{tikzcd}[column sep=normal]
f^{-1}(U) \arrow{r}{h}  \arrow{rd}{f} 
  & \Spec(A[T]) \arrow{d} \\
    & U
\end{tikzcd}
\end{center}
commute. By the properties of $G$-properness (see Remark~\ref{remark:Gproperties}), one gets that $f|_{f^{-1}(U)}$ is also $G$-proper. Therefore, if $Z$ denotes the image of $h$, then $Z$ is a proper scheme over $U$ by Lemma~\ref{lemma:closedImage}. Also, as a closed subscheme of $\Spec(A[T])$, $Z = \Spec(A[T]/I)$ for some ideal $I \subset A[T]$. We thus obtain that $Z \to U$ is affine and proper, and so $Z$ is integral over $U$. This means there exists a monic polynomial $P \in I$ such that $P(h)=0$, which proves that $A \to \Gamma(f^{-1}(U),\sheaf{O}_{\Rss})^{G}$ is integral. It remains to show that this morphism is also of finite type. The action of $G$ on $f^{-1}(U)$ induces a dual action of $\Gamma(G,\sheaf{O}_G)$ on $\Gamma(f^{-1}(U),\sheaf{O}_{R})$. But as $G$ is reductive, by Nagata's Theorem (see Theorem~\ref{thm:Nagata} below) it follows that $\Gamma(f^{-1}(U),\sheaf{O}_{R})^G$ is a finitely generated $\C$-algebra. In particular, $\Gamma(f^{-1}(U),\sheaf{O}_{R})^G$ is of finite type over $A$.

We show that $f'$ is surjective. By Lemma~\ref{lemma:closedImage} the image of $f'$ is a closed subscheme of $S'$, and thus it is determined by a coherent sheaf of ideals $\sheaf{I} \subset \sheaf{O}_{S'}$ such that $f'^* \sheaf{I} \to \sheaf{O}_{R}$ is zero. Then the composition $\sheaf{I} \to \sheaf{O}_{S'} \to (f'_* \sheaf{O}_{R})^{G}$ is also zero by adjointness. But $(f'_* \sheaf{O}_{R})^{G} = \sheaf{O}_{S'}$ by (1), which implies that the closed subscheme determined by $\sheaf{I}$ is exactly $S'$.
\end{proof}

\begin{theorem}[Nagata's Theorem]\label{thm:Nagata}
    If $A$ is a finitely generated $\C$-algebra endowed with the action of a reductive group $G$, then its ring of invariants $A^G$ is finitely generated.
\end{theorem}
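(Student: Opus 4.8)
The plan is to run Hilbert's classical finiteness argument, after an equivariant reduction to the polynomial case. The one substantial ingredient is that a reductive group over $\C$ is \emph{linearly reductive}: every rational $G$-module decomposes as a direct sum of irreducibles, so for any rational $G$-algebra $B$ there is a \emph{Reynolds operator} $R_B : B \to B^G$, the projection onto the trivial isotypic component. I would take this for granted, since it is Weyl's complete reducibility theorem in characteristic zero; this is also where the real content sits, because in positive characteristic reductive groups need not be linearly reductive and one must instead invoke geometric reductivity and Haboush's theorem (which is why the general statement is due to Nagata rather than Hilbert). Only two formal properties of $R_B$ enter the argument: it is $B^G$-linear, hence satisfies the \emph{Reynolds identity} $R_B(fb) = f\,R_B(b)$ for $f \in B^G$ and $b \in B$; and it is functorial, i.e.\ $\phi \circ R_B = R_{B'} \circ \phi$ for every $G$-equivariant algebra homomorphism $\phi : B \to B'$.

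\textbf{Reduction to a polynomial ring.}
First I would reduce to the case of a polynomial ring. As the $G$-action on $A$ is rational it is locally finite, so the finitely many algebra generators of $A$ span, together with their $G$-orbits, a finite-dimensional $G$-stable subspace $V \subset A$ that still generates $A$. The inclusion $V \hookrightarrow A$ extends to a $G$-equivariant surjection $\pi : S \twoheadrightarrow A$ from the polynomial ring $S := \Sym(V)$, which carries the induced linear (hence rational) $G$-action and the standard grading with $V$ in degree one. By functoriality of the Reynolds operators the restriction $\pi : S^G \to A^G$ is still surjective: for $\bar a \in A^G$ choose a lift $a \in S$, and then $R_S(a) \in S^G$ maps to $R_A(\pi(a)) = R_A(\bar a) = \bar a$. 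Hence $A^G$ is a quotient ring of $S^G$, so it is enough to prove that $S^G$ is a finitely generated $\C$-algebra. We may therefore assume $A = \C[x_1,\dots,x_n]$ is graded with $G$ acting by graded automorphisms, so that $A^G$ is a graded subalgebra with $A^G_0 = \C$ and $R_A$ preserves degree.

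\textbf{Hilbert's argument.}
Finally I would run Hilbert's argument. Put $A^G_+ = \bigoplus_{d>0} A^G_d$ and let $I = A \cdot A^G_+$ be the Hilbert ideal. By the Hilbert basis theorem $I$ is finitely generated, and being homogeneous and generated by invariants it admits a generating set $f_1,\dots,f_m$ consisting of homogeneous elements of $A^G_+$. I claim $A^G = \C[f_1,\dots,f_m]$; since $A^G$ is graded with $A^G_0 = \C$, it suffices to show by induction on $d \geq 0$ that every homogeneous $f \in A^G$ of degree $d$ lies in $\C[f_1,\dots,f_m]$. For $d = 0$ this is clear. For $d > 0$ we have $f \in A^G_+ \subset I$, so $f = \sum_i g_i f_i$ with $g_i \in A$ homogeneous of degree $d - \deg f_i < d$; applying $R_A$ and the Reynolds identity gives $f = R_A(f) = \sum_i R_A(g_i) f_i$, and each $R_A(g_i) \in A^G$ is homogeneous of degree $< d$, hence lies in $\C[f_1,\dots,f_m]$ by the induction hypothesis. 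Therefore $f$ does too, which proves the claim and with it the theorem. The only delicate point, as noted above, is the existence of the Reynolds operator; once that is in hand the rest of the argument is entirely formal.
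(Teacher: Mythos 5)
Your proof is correct. Note that the paper itself offers no proof of this statement: Nagata's theorem is quoted there as a classical black box (it is invoked once, in the proof of Proposition~\ref{Prop:normalization}, to see that $\Gamma(f^{-1}(U),\sheaf{O}_R)^G$ is a finitely generated $\C$-algebra), so there is no internal argument to compare yours against. What you give is the standard Hilbert--Weyl argument, and you correctly isolate the two genuine inputs: linear reductivity of $G$ over $\C$ (giving the Reynolds operator, with its $A^G$-linearity and functoriality), and local finiteness of the rational action (giving the equivariant surjection from $\Sym(V)$ and hence the reduction to the graded polynomial case). The rationality of the action is implicit in the paper's statement but is exactly what holds in its application, where the action comes from a $\Gamma(G,\sheaf{O}_G)$-comodule structure. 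Your remark that the characteristic-zero case is really Hilbert's theorem, and that Nagata's general statement requires geometric reductivity and Haboush, is accurate and appropriately scoped; for the paper's purposes (everything is over $\C$) your argument fully suffices.
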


\subsection{A devissage lemma for equivariant sheaves}

We continue in the notation of the previous section, hence $G$ is a connected algebraic group and let $R$ be a $G$-universally closed scheme over $\C$ (see Definition~\ref{def:Gproper}). We will prove a devissage lemma for $G$-equivariant sheaves on $R$. For this, we work within the category $\Coh^G(R)$ of $G$-equivariant coherent sheaves on $R$. We say an object $E$ of $\Coh^G(R)$ has property $\mathcal{P}$ if the vector space of $G$-invariant global sections $\Gamma(R,E)^G$ is finitely generated. 

We want to prove that any $G$-equivariant sheaf has $\mathcal{P}$, for which we need the following two standard lemmata.

\begin{lemma}\label{lemma:exactSequenceDevissage}
For any short exact sequence of $G$-equivariant sheaves on $R$
\begin{align*}
    0 \to E' \to E \to E'' \to 0,
\end{align*}
if $E'$ and $E''$ (resp. $E$ and $E''$) have $\mathcal{P}$, then the third has $\mathcal{P}$.
\end{lemma}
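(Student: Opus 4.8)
The plan is to exploit the long exact sequence in $G$-invariants coming from the short exact sequence, together with the fact that $\Gamma(R,-)^{G}$ is a left-exact functor on $\Coh^{G}(R)$. Concretely, applying $\Gamma(R,-)$ to $0 \to E' \to E \to E'' \to 0$ and then passing to $G$-invariants (which is again left-exact, since taking $G$-invariants of a $\C$-vector space with rational $G$-action is exact when $G$ is not necessarily reductive one only gets left-exactness, but left-exactness is all we need here) yields an exact sequence of $\C$-vector spaces
\begin{align*}
    0 \to \Gamma(R,E')^{G} \to \Gamma(R,E)^{G} \to \Gamma(R,E'')^{G}.
\end{align*}
This reduces the statement to elementary linear algebra about finitely generated (finite-dimensional) vector spaces.

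First I would treat the case where $E'$ and $E''$ have $\mathcal{P}$. From the exact sequence above, $\Gamma(R,E)^{G}$ is squeezed between the subspace $\Gamma(R,E')^{G}$, which is finite-dimensional by hypothesis, and the quotient $\Gamma(R,E)^{G}/\Gamma(R,E')^{G}$, which injects into $\Gamma(R,E'')^{G}$ and is therefore finite-dimensional as well. Hence $\dim_{\C}\Gamma(R,E)^{G} \leq \dim_{\C}\Gamma(R,E')^{G} + \dim_{\C}\Gamma(R,E'')^{G} < \infty$, so $E$ has $\mathcal{P}$. Second, in the case where $E$ and $E''$ have $\mathcal{P}$, the left-exactness gives an injection $\Gamma(R,E')^{G} \hookrightarrow \Gamma(R,E)^{G}$, and a subspace of a finite-dimensional space is finite-dimensional, so $E'$ has $\mathcal{P}$ — here one does not even need the hypothesis on $E''$, but it is harmless to keep it for symmetry of the statement.

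The only genuine point to verify — and the place where a reader might pause — is the left-exactness of $(-)^{G}$, i.e. that $\Gamma(R,-)^{G}$ really is left-exact on $\Coh^{G}(R)$. This follows because $\Gamma(R,-)$ is left-exact on $\Coh(R)$, and taking $G$-invariants is left-exact on the category of $\C$-vector spaces equipped with an algebraic (rational) $G$-action: the invariants functor is the kernel of the two maps $V \rightrightarrows \Gamma(G,\sheaf{O}_{G})\otimes V$ defining the comodule structure, hence a kernel, hence left-exact. No reductivity of $G$ is needed for this direction (reductivity would be needed only for right-exactness, which we do not use). Thus the proof is complete once this standard fact is recorded; I expect no real obstacle beyond making the left-exactness explicit.
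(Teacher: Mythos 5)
Your argument is correct: the paper omits this proof as ``completely analogous to the non-equivariant case,'' and your use of the left-exact sequence $0 \to \Gamma(R,E')^{G} \to \Gamma(R,E)^{G} \to \Gamma(R,E'')^{G}$ (left-exactness of $\Gamma(R,-)$ followed by left-exactness of rational $G$-invariants) is exactly the standard argument being alluded to, and it correctly covers only the two cases actually asserted. Your observation that the second case needs no hypothesis on $E''$ is also accurate.
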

\begin{proof}
The proof is completely analogue to the one in the non-equivariant case, we omit it. 

\end{proof}

\begin{lemma}\label{lemma:globalSections}
If $Y$ is an integral $G$-stable closed subscheme of $R$, then $\Gamma(Y,\sheaf{O}_Y)^G = \C$.
\end{lemma}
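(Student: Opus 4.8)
\emph{Proof proposal.} The plan is to turn the valuative condition built into $G$-universal closedness into a statement that every $G$-invariant global function on $Y$ is constant. First I would check that $Y$, with its structure morphism $Y \to \Spec(\C)$, is itself a $G$-universally closed $\C$-scheme. Given a diagram as in Definition~\ref{def:Gproper} with $R$ replaced by $Y$ and $S = \Spec(\C)$, I compose the map $g\colon \Spec(K) \to Y$ with the closed immersion $\iota\colon Y \hookrightarrow R$ to obtain a diagram for $R \to \Spec(\C)$; since $R$ is $G$-universally closed over $\C$, this yields a morphism $i'\colon \Spec(A) \to R$ such that $i' \circ j$ and $\iota \circ g$ differ by some $\gamma \in G(K)$. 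As $Y$ is $G$-stable and $\iota$ is $G$-equivariant, $i' \circ j = \gamma\cdot(\iota\circ g) = \iota\circ(\gamma\cdot g)$ has set-theoretic image in $Y$; hence the image of the closed point of $\Spec(A)$ under $i'$ lies in the closure of a point of $Y$, so in $Y$, and therefore $i'(\Spec A)\subseteq Y$. Since $\Spec(A)$ and $Y$ are both reduced, $i'$ factors through $\iota$ as a morphism $i\colon \Spec(A) \to Y$, and because $\iota$ is a monomorphism, $i\circ j$ and $g$ differ by the same $\gamma$. This proves $Y \to \Spec(\C)$ is $G$-universally closed.

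Next, suppose for contradiction that there is $f \in \Gamma(Y,\sheaf{O}_Y)^{G}$ with $f \notin \C$. Since $Y$ is integral, $\Gamma(Y,\sheaf{O}_Y)$ embeds in $K \coloneqq k(Y)$, and since $\C$ is algebraically closed, $f$ is transcendental over $\C$. I would then choose a discrete valuation ring $A$ of $K$ with $\C \subseteq A$ and $f \notin A$: localise $\C[f^{-1}]$ at $(f^{-1})$ to get a discrete valuation ring of $\C(f)$ in which $f$ has negative valuation, and extend it to a discrete valuation ring of $K$, which is possible as $K$ is finitely generated over $\C(f)$ (cf. \cite[Ch.~II, Ex.~4.11]{hartshorne1977algebraic}). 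Let $g\colon \Spec(K) \to Y$ be the inclusion of the generic point and $j\colon \Spec(K) \to \Spec(A)$ the canonical map; together with $\Spec(A) \to \Spec(\C)$ these fit into the diagram of Definition~\ref{def:Gproper} for $Y \to \Spec(\C)$.

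By the first paragraph there are a morphism $i\colon \Spec(A)\to Y$ and an element $\gamma \in G(K)$ with $i\circ j = \gamma\cdot g$. Since $f$ is $G$-invariant, pulling back along $\gamma\cdot g$ gives the same element of $K$ as pulling back along $g$, namely $f$ itself (viewed inside $K$ via the generic point); hence $j^*(i^* f) = f$. But $i^* f \in \Gamma(\Spec(A),\sheaf{O}_{\Spec(A)}) = A$, so $f = i^* f \in A$, contradicting the choice of $A$. Therefore $\Gamma(Y,\sheaf{O}_Y)^{G} = \C$. The only delicate point is the factorization of the lift $i'$ through the closed immersion $\iota$ and the correct bookkeeping of the clause ``differ by a group element'' against the $G$-invariance of $f$; everything else is a direct application of the valuative criterion.
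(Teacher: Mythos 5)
Your proof is correct, but it takes a different route from the paper's. The paper's argument is a two-line reduction: a $G$-invariant section $s$ defines a $G$-invariant morphism $s\colon Y \to \A^1$, and since $Y$ inherits $G$-universal closedness from $R$, Lemma~\ref{lemma:closedImage} (itself proved via Nagata compactification and the valuative criterion) shows the image of $s$ is proper over $\C$, hence a finite subset of $\A^1$, hence a single point because $Y$ is integral. You instead run the valuative criterion from scratch with a hand-picked test object: assuming $f \notin \C$, you observe $f$ is transcendental over $\C$, build a discrete valuation ring $A \subset k(Y)$ in which $f$ has a pole by localising $\C[f^{-1}]$ at $(f^{-1})$ and extending to $k(Y)$, and then use the lift $i\colon \Spec(A) \to Y$ together with the $G$-invariance of $f$ (which neutralises the ``differ by a group element'' clause) to force $f \in A$, a contradiction. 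Your careful verification that $Y$ is itself $G$-universally closed -- factoring the lift $i'$ through the $G$-stable closed subscheme $Y$ using stability under specialization and reducedness of $\Spec(A)$ -- fills in a step the paper dismisses with ``clearly so is $Y$.'' What each approach buys: the paper's is shorter and reuses machinery already in place (and the properness-of-image statement is needed elsewhere anyway), while yours is more self-contained, avoiding Nagata compactification entirely and making explicit exactly which valuation detects a non-constant invariant function. Both are sound; the only delicate points in yours -- the factorization through $\iota$ and the bookkeeping of the group element against $G$-invariance -- are handled correctly.
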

\begin{proof}
   Let $s \in \Gamma(Y,\sheaf{O}_Y)^G$ be a $G$-invariant global section, which induces a $G$-invariant morphism $s: Y \to \A^1$. As $R$ is $G$-universally closed over $\C$, clearly so is $Y$. By Lemma~\ref{lemma:closedImage}, the image of $s$ is proper over $\C$, and so it consists of a finite number of points in $\A^1$. As $Y$ is integral, the image reduces to a single point.
\end{proof} 

\begin{theorem}\label{thm:devissageThm}
    Any $G$-equivariant sheaf on $R$ satisfies property $\mathcal{P}$.
\end{theorem}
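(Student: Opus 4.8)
The plan is to prove the statement by Noetherian induction on the support of the $G$-equivariant coherent sheaf $E$ (legitimate since $R$ is of finite type over $\C$), successively reducing to the case where $E$ is the structure sheaf of an integral $G$-stable closed subscheme $Z \subset R$, at which point Lemma~\ref{lemma:globalSections} gives $\Gamma(Z,\sheaf{O}_Z)^G = \C$. All the bookkeeping is done with the two-out-of-three principle of Lemma~\ref{lemma:exactSequenceDevissage}. Two remarks keep every reduction step equivariant: since $G$ is connected, the reduced support $Z \coloneqq (\Supp E)_{\mathrm{red}}$ is $G$-stable and each of its irreducible components is $G$-stable (the morphism from $G$ to the finite permutation group of the components is constant); and the ideal sheaf of any $G$-stable closed subscheme is $G$-stable, images of morphisms of $G$-equivariant sheaves are $G$-equivariant, and the torsion subsheaf of a coherent sheaf on an integral scheme is canonical, hence $G$-equivariant.

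First I would reduce to the case where $E$ is an $\sheaf{O}_Z$-module with $Z$ reduced: letting $\sheaf{I}_Z \subset \sheaf{O}_R$ be the ($G$-stable) ideal of $Z$, one has $\sheaf{I}_Z^N E = 0$ for $N \gg 0$, and the $\sheaf{I}_Z$-adic filtration of $E$ consists of $G$-equivariant subsheaves whose successive quotients are $G$-equivariant $\sheaf{O}_Z$-modules, so by Lemma~\ref{lemma:exactSequenceDevissage} it suffices to treat each quotient. Next I would reduce to $Z$ integral: if $Z$ has irreducible components $Z_1,\dots,Z_r$ with $r \geq 2$, write $\sheaf{I}_{Z_1}\subset \sheaf{O}_Z$ for the ($G$-stable) ideal of $Z_1$ and apply Lemma~\ref{lemma:exactSequenceDevissage} to $0 \to \sheaf{I}_{Z_1} E \to E \to E/\sheaf{I}_{Z_1} E \to 0$; here $E/\sheaf{I}_{Z_1}E$ is an $\sheaf{O}_{Z_1}$-module, while $\sheaf{I}_{Z_1}E$ vanishes at the generic point of $Z_1$ (where $\sheaf{I}_{Z_1}$ is zero, as $Z$ is reduced), so both terms have support strictly smaller than $Z$ and are covered by the induction hypothesis.

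For $Z$ integral I would run a secondary induction on the generic rank $s \coloneqq \dim_{\kappa(\eta)} E_\eta$ at the generic point $\eta$ of $Z$. Replacing $E$ by $E/\tau(E)$, where $\tau(E)$ is the torsion subsheaf (whose support is strictly smaller than $Z$, hence handled by the primary induction, with Lemma~\ref{lemma:exactSequenceDevissage}), we may assume $E$ torsion-free. If $s = 0$ then $E = 0$; if $s \geq 1$ and $\Gamma(R,E)^G = \Gamma(Z,E)^G = 0$ there is nothing to prove; otherwise a nonzero $v \in \Gamma(Z,E)^G$ defines a $G$-equivariant morphism $\sheaf{O}_Z \to E$, injective since $v \neq 0$, $Z$ is integral and $E$ is torsion-free, giving a short exact sequence $0 \to \sheaf{O}_Z \to E \to E'' \to 0$ of $G$-equivariant sheaves with $E''$ of generic rank $s-1$. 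Then $\mathcal{P}(\sheaf{O}_Z)$ holds by Lemma~\ref{lemma:globalSections}, and $\mathcal{P}(E'')$ holds by the primary induction applied to $\tau(E'')$ together with the secondary induction applied to $E''/\tau(E'')$; so Lemma~\ref{lemma:exactSequenceDevissage} yields $\mathcal{P}(E)$ and closes both inductions.

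The only genuinely delicate point is making the reduction to the integral case equivariant: the usual argument relating a torsion-free sheaf of rank $s$ to $\sheaf{O}_Z^{\oplus s}$ by clearing denominators produces an isomorphism that is not $G$-equivariant, so instead one splits off an honest copy of $\sheaf{O}_Z$ along a $G$-invariant global section — or observes that none exist, in which case the space of invariants already vanishes. Everything else is formal; in particular no reductivity of $G$ is needed, since the only instance of the two-out-of-three principle actually invoked is "$\mathcal{P}$ for a subsheaf and for a quotient implies $\mathcal{P}$ for the extension", which follows from left exactness of $\Gamma(R,-)^G$.
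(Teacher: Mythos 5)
Your proof is correct and follows essentially the same route as the paper: Noetherian induction on the ($G$-stable) support, reduction to the reduced and then integral case via the ideal-adic filtration and the component decomposition, and a secondary induction on generic rank that splits off a copy of $\sheaf{O}_Z$ along a $G$-invariant section, invoking Lemma~\ref{lemma:globalSections} and Lemma~\ref{lemma:exactSequenceDevissage} at the same points. Your treatment is slightly more careful in two spots the paper leaves implicit — the case $\Gamma(R,E)^G = 0$ where there is no section to split off, and the observation that only the ``subobject and quotient imply extension'' half of the two-out-of-three lemma is needed — but these are refinements, not a different argument.
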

\begin{proof}
We argue by noetherian induction as follows. Let $Y$ be a $G$-stable closed subscheme of $R$ and assume that any $G$-equivariant sheaf whose support is a proper subscheme $Z \subset Y$ has $\mathcal{P}$. We want to show that any $G$-equivariant sheaf supported on $Y$ has $\mathcal{P}$. 

Let $E$ be a $G$-equivariant sheaf on $Y$. First, we reduce to the case when $Y$ is reduced. In case $Y$ is not reduced, let $\sheaf{I}$ be the sheaf ideal corresponding to the reduced induced subscheme structure on $Y$. Note that we can naturally endow $\sheaf{I}$ with the structure of a $G$-equivariant sheaf. Also there exists an integer $n > 0$ such that $\sheaf{I}^n = 0$. We have a filtration of $G$-equivariant sheaves
\begin{align*}
    0 = \sheaf{I}^nE \subset \sheaf{I}^{n-1}E \subset \cdots \subset  \sheaf{I}E \subset E
\end{align*}
such that the successive quotients $\sheaf{I}^{j}E/\sheaf{I}^{j+1}E$ are annihilated by $\sheaf{I}$. Hence these quotients are supported on $Y_\text{red}$, and so they satisfy $\mathcal{P}$ by assumption. By splitting the filtration above in short exact sequences and using Lemma~\ref{lemma:exactSequenceDevissage}, we get that $E$ has $\mathcal{P}$.

Now we assume that $Y$ is reduced. In case $Y$ is not irreducible, let $Y_1, \ldots, Y_r$ be its irreducible components. Note that the $Y_i$ are $G$-stable since $G$ is connected. Let $\sheaf{I}_1$ be the ideal sheaf of $Y_1$ with its natural $G$-equivariant structure and consider the exact sequence
\begin{align*}
    0 \to \sheaf{I}_1 E \to E \to E/\sheaf{I}_1 E \to 0.
\end{align*}
Since $\sheaf{I}_1 E$ is supported on $Y_2 \cup \cdots \cup Y_r$ and $E/\sheaf{I}_1 E$ is supported on $Y_1$, we may assume by noetherian induction that both sheaves satisfy $\mathcal{P}$. By Lemma~\ref{lemma:exactSequenceDevissage}, we deduce that $E$ has also $\mathcal{P}$.

Finally, we assume that $Y$ is integral and show that $E$ has $\mathcal{P}$. We argue by induction on the rank of $E$. If $\rank(E) =0$, then $E$ is supported on a proper subscheme of $Y$ and so $E$ has $\mathcal{P}$. Now suppose that $\rank(E) > 0$. If we denote by $F$ the maximal torsion subsheaf of $E$, then we obtain a short exact sequence in $\Coh^G(R)$ given by
\begin{align*}
    0 \to F \to E \to E/F \to 0.
\end{align*}
Note that $F$ has $\mathcal{P}$ as it is supported on a proper subscheme. Using again Lemma~\ref{lemma:exactSequenceDevissage} for the above sequence, we reduce to the case $E$ is pure. We may suppose that $E$ has at least a $G$-invariant global section, say $s$.
We have a short exact sequence
\begin{align*}
    0 \to \sheaf{O}_Y \xrightarrow{\cdot s} E \to Q \to 0
\end{align*}
in $\Coh^G(R)$ such that $\rank(Q) < \rank(E)$. Hence $Q$ has $\mathcal{P}$ by induction on rank. As $\sheaf{O}_Y$ has also $\mathcal{P}$ by Lemma~\ref{lemma:globalSections}, we conclude that $E$ has $\mathcal{P}$. 
\end{proof}

\subsection{Iitaka $G$-fibration}\label{subsect:Iitaka}

Here we assume that $G$ is reductive. Let $R$ be a $G$-proper scheme over $\C$ and $\sheaf{L}$ a $G$-equivariant $G$-semiample line bundle on $R$, i.e. there exists $\nu > 0$ such that $\sheaf{L}^\nu$ is globally generated by $G$-invariant sections. Then the set
\begin{align*}
    M(R,\sheaf{L}) = \{ m \in \N \mid \sheaf{L}^{\otimes m} \text{ is generated by $G$-invariant sections} \}
\end{align*}
is nonempty. As $R$ is $G$-proper, for each $\nu \in M(R,\sheaf{L})$ the vector space $\Gamma(R,\sheaf{L}^\nu)^G$ is finitely generated by Theorem~\ref{thm:devissageThm} and induces a $G$-invariant morphism
\begin{align*}
\varphi_\nu : R \to \CP(\Gamma(R,\sheaf{L}^\nu)^G)
\end{align*}
such that ${\varphi}_\nu^*\sheaf{O}(1) \cong \sheaf{L}^\nu$. According to Lemma~\ref{lemma:closedImage}, the image of $\varphi_\nu$, which we denote by $M_\nu$, is a projective scheme over $\C$. By a slight abuse of notation, we will denote by $\varphi_\nu$ also the morphism from $R$ to $M_\nu$. Then $M_\nu$ is endowed with an ample line bundle $\sheaf{A}_{\nu}$ such that ${\varphi}_\nu^*\sheaf{A}_{\nu} \cong \sheaf{L}^\nu$.

For $k > 0$, the $k$-th symmetric power $\Sym^k\Gamma(R,\sheaf{L}^{\nu})^{G}$ is a linear subsystem of $\Gamma(R,\sheaf{L}^{k\nu})^G$ that globally generates $\sheaf{L}^{k\nu}$. This gives a commutative diagram
\begin{center}
\begin{tikzcd}[column sep=small]
& R \arrow[dl,"\varphi_{k\nu}"'] \arrow[dr,"\varphi_{\nu}"] & \\
M_{k\nu} \arrow[rr,"\pi_{k}"] & & M_\nu
\end{tikzcd}
\end{center}
with $\pi_{k}$ finite such that $\pi_{k}^* \sheaf{A}_{\nu}^k = \sheaf{A}_{k\nu}$.

\begin{remark}\label{remark:HL}
In \cite[Prop.~8.2.6]{huybrechts2010geometry}, the authors considered the projective limit of a system $\{ M_\nu \}$ as above dominated by a $G$-scheme $R^{\mu ss}$. Even though this limit exists, in general there is no guarantee that the system eventually stabilizes or, equivalently, that the limit is a priori of finite type. We shall provide an argument for this fact in the proof of Theorem~\ref{thm:Unique}, as the scheme $R^{\mu ss}$ they consider is actually $G$-proper (see Lemma~\ref{lemma:RssGproper}).   
\end{remark}

It should become apparent by now that our approach shares some similarities with the one described by Lazarsfeld for the Iitaka fibration \cite[Ch.~2]{lazarsfeld2017positivity}, except that $R$ is neither projective nor normal. In this regard, we state the following theorem which is a generalized version of \cite[Thm.~2.1.27]{lazarsfeld2017positivity}.

\begin{theorem}\label{thm:Unique}
Under the same assumptions as above, there exists a unique projective scheme $M$ over $\C$ and a $G$-invariant surjective morphism $\varphi : R \to M$ such that for $\nu \gg 0$ we have $M_\nu = M$ and $\varphi_\nu = \varphi$. Moreover, $M$ is endowed with an ample line bundle $\sheaf{A}$ such that $\varphi^*\sheaf{A} \cong \sheaf{L}^e$, where $e$ is the exponent of $M(R,\sheaf{L})$, i.e. the largest natural number such that each member of $M(R,\sheaf{L})$ is a multiple of $e$.
\end{theorem}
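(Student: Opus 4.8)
The plan is to realize $M$ as the relative $G$-normalization of $R$ over a single, arbitrary $M_{\nu_1}$, and then to propagate the identification $M_\nu\cong M$ to all large $\nu$. First I record the combinatorics of the index set. The set $M(R,\sheaf{L})$ is a nonempty subsemigroup of $(\N,+)$: if $\sheaf{L}^m$ and $\sheaf{L}^{m'}$ are generated by $G$-invariant sections, then the pairwise products of those sections are $G$-invariant sections generating $\sheaf{L}^{m+m'}$. Hence $M(R,\sheaf{L})$ contains every sufficiently large multiple of its exponent $e$. For each $\nu\in M(R,\sheaf{L})$ the space $\Gamma(R,\sheaf{L}^\nu)^G$ is finite-dimensional by Theorem~\ref{thm:devissageThm}, so $\varphi_\nu\colon R\to M_\nu$ is defined with $M_\nu\subset\CP(\Gamma(R,\sheaf{L}^\nu)^G)$ projective by Lemma~\ref{lemma:closedImage}, $\sheaf{A}_\nu:=\sheaf{O}(1)|_{M_\nu}$ ample and $\varphi_\nu^*\sheaf{A}_\nu\cong\sheaf{L}^\nu$; and for $\nu\mid\nu'$ there is a finite surjective $\rho_{\nu,\nu'}\colon M_{\nu'}\to M_\nu$ with $\varphi_\nu=\rho_{\nu,\nu'}\circ\varphi_{\nu'}$ and $\rho_{\nu,\nu'}^*\sheaf{A}_\nu^{\nu'/\nu}\cong\sheaf{A}_{\nu'}$. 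Since $R$ is $G$-proper, each $\varphi_\nu\colon R\to M_\nu$ is $G$-proper and surjective (Remark~\ref{remark:Gproperties}), so Proposition~\ref{Prop:normalization} may be applied to it.

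Fix $\nu_1\in M(R,\sheaf{L})$ and let $f'\colon R\to S'\xrightarrow{\pi}M_{\nu_1}$ be the relative $G$-normalization of $R$ over $M_{\nu_1}$ given by Proposition~\ref{Prop:normalization}: then $S'$ is finite over $M_{\nu_1}$, hence projective, $f'$ is $G$-invariant and surjective, and $(f'_*\sheaf{O}_R)^G=\sheaf{O}_{S'}$. Put $\sheaf{B}:=\pi^*\sheaf{A}_{\nu_1}$, an ample line bundle on $S'$ with $f'^*\sheaf{B}\cong\sheaf{L}^{\nu_1}$. The key point is a consequence of the projection formula together with $(f'_*\sheaf{O}_R)^G=\sheaf{O}_{S'}$ (the latter playing the role of ``$f_*\sheaf{O}=\sheaf{O}$'' in the ordinary Stein factorization): for every line bundle $\sheaf{N}$ on $S'$ one has a natural identification $\Gamma(R,f'^*\sheaf{N})^G\cong\Gamma(S',\sheaf{N})$, and $f'^*$ is injective on Picard groups, so $\sheaf{L}^{\nu_1}$ admits a \emph{unique} descent $\sheaf{B}$ along $f'$. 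In particular $\Gamma(R,\sheaf{L}^{k\nu_1})^G\cong\Gamma(S',\sheaf{B}^k)$ for all $k\ge1$. Since $\sheaf{B}$ is ample on the projective scheme $S'$, there is $K_0$ such that for $k\ge K_0$ the complete linear system of $\sheaf{B}^k$ defines a closed immersion $S'\hookrightarrow\CP(\Gamma(S',\sheaf{B}^k))$; as the $G$-invariant sections of $\sheaf{L}^{k\nu_1}=f'^*\sheaf{B}^k$ are precisely the $f'^*$-images of sections of $\sheaf{B}^k$, the morphism $\varphi_{k\nu_1}$ equals that closed immersion composed with $f'$, and since $f'$ is surjective we get $M_{k\nu_1}\cong S'$, $\varphi_{k\nu_1}=f'$ and $\sheaf{A}_{k\nu_1}\cong\sheaf{B}^k$ for every $k\ge K_0$. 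I then set $M:=S'$ and $\varphi:=f'$.

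It remains to handle an arbitrary $\nu\in M(R,\sheaf{L})$ with $\nu\gg0$. For such a $\nu$ with $\nu\ge K_0$, the product $\nu\nu_1$, being a sum of $\nu_1$ copies of $\nu$, lies in $M(R,\sheaf{L})$ and equals $\nu\cdot\nu_1$, so the previous paragraph gives $M_{\nu\nu_1}\cong S'$ and $\varphi_{\nu\nu_1}=f'$. Writing $q:=\rho_{\nu,\nu\nu_1}\colon S'=M_{\nu\nu_1}\to M_\nu$, which is finite surjective with $\varphi_\nu=q\circ f'$, one computes $(\varphi_{\nu,*}\sheaf{O}_R)^G=q_*\bigl((f'_*\sheaf{O}_R)^G\bigr)=q_*\sheaf{O}_{S'}$, and since $q$ is finite the integral closure of $\sheaf{O}_{M_\nu}$ in this sheaf is all of $q_*\sheaf{O}_{S'}$; hence, by the universal property of the relative $G$-normalization, $S'$ is \emph{also} the relative $G$-normalization of $R$ over $M_\nu$. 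Thus the reasoning of the previous paragraph applies verbatim with $\nu_1$ replaced by $\nu$: $\sheaf{L}^\nu$ descends along $f'$ to the ample bundle $\sheaf{B}_\nu:=q^*\sheaf{A}_\nu$ on $S'$, one has $\Gamma(R,\sheaf{L}^\nu)^G\cong\Gamma(S',\sheaf{B}_\nu)$ and $\varphi_\nu$ is the linear-system morphism of $\sheaf{B}_\nu$ precomposed with $f'$, and $M_\nu\cong S'$, $\varphi_\nu=f'$ as soon as $\sheaf{B}_\nu$ is very ample. To control $\sheaf{B}_\nu$ uniformly, observe that $\nu\mapsto\sheaf{B}_\nu$ is a homomorphism from $(M(R,\sheaf{L})_{\ge K_0},+)$ to $\Pic(S')$ by uniqueness of descent; choosing large $\nu_2,\nu_3\in M(R,\sheaf{L})$ with $\gcd(\nu_2,\nu_3)=e$ and a B\'ezout relation $a\nu_2+b\nu_3=e$, the line bundle $\sheaf{A}:=\sheaf{B}_{\nu_2}^{\,a}\otimes\sheaf{B}_{\nu_3}^{\,b}$ satisfies $f'^*\sheaf{A}\cong\sheaf{L}^e$, whence $\sheaf{B}_\nu\cong\sheaf{A}^{\nu/e}$ for all such $\nu$ by uniqueness of descent, and $\sheaf{A}$ is ample because $\sheaf{A}^{\nu_2/e}\cong\sheaf{B}_{\nu_2}$ is. Therefore $\sheaf{B}_\nu=\sheaf{A}^{\nu/e}$ is very ample once $\nu/e$ is large, which gives $M_\nu=M$ and $\varphi_\nu=\varphi$ for all $\nu\gg0$, together with the ample line bundle $\sheaf{A}$ on $M$ and $\varphi^*\sheaf{A}\cong\sheaf{L}^e$. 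Uniqueness of the triple is then immediate: $M$ and $\varphi$ are forced by the requirement that $M_\nu=M$, $\varphi_\nu=\varphi$ for $\nu\gg0$, and $\sheaf{A}$ is forced by the injectivity of $\varphi^*$.

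The step I expect to be the real obstacle is the eventual stabilization of the system $\{M_\nu\}$: a priori the finite morphisms $\rho_{\nu,\nu'}$ could be arbitrarily ramified and the inverse limit need not be of finite type over $\C$ — this is exactly the point left unaddressed in \cite[Prop.~8.2.6]{huybrechts2010geometry} and recorded in Remark~\ref{remark:HL}. The device that overcomes it is the passage to the \emph{fixed} projective scheme $S'$, the relative $G$-normalization, combined with the projection-formula identification $\Gamma(R,\sheaf{L}^{k\nu_1})^G\cong\Gamma(S',\sheaf{B}^k)$ with $\sheaf{B}$ ample on $S'$: ampleness of $\sheaf{B}$ forces $M_{k\nu_1}=S'$ for $k\gg0$, and the universal property of the $G$-normalization then propagates this to all large $\nu$. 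Reductivity of $G$ is used only through Proposition~\ref{Prop:normalization} (hence through Nagata's Theorem~\ref{thm:Nagata}), which guarantees that $S'$ exists and is finite over $M_{\nu_1}$; finite-dimensionality of the spaces $\Gamma(R,\sheaf{L}^\nu)^G$, needed even to make sense of $\varphi_\nu$, comes from the devissage Theorem~\ref{thm:devissageThm}.
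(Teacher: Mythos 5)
Your proof is correct and follows essentially the same route as the paper: both pass to the relative $G$-normalization $M$ of $M_{\nu_1}$ in $R$, use $(\varphi_*\sheaf{O}_R)^G = \sheaf{O}_M$ together with the equivariant projection formula to identify $H^0(R,\sheaf{L}^{k\nu_1})^G$ with $H^0(M,\sheaf{B}^k)$ for the descended ample bundle $\sheaf{B}$, and conclude stabilization from very ampleness of $\sheaf{B}^k$. The only difference is that you spell out the second part (the semigroup structure of $M(R,\sheaf{L})$, the propagation to all large $\nu$ via the universal property of the $G$-normalization, and the B\'ezout construction of $\sheaf{A}$ with $\varphi^*\sheaf{A}\cong\sheaf{L}^e$), which the paper delegates to the proof of \cite[Thm.~2.1.27]{lazarsfeld2017positivity}; your filled-in details are sound.
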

\begin{proof}
Let $\nu \in M(R,\sheaf{L})$. We apply the result of Proposition~\ref{Prop:normalization} to the morphism $\varphi_\nu: R \to M_\nu$. Denote by $M$ the $G$-normalization of $M_\nu$ in $R$. This gives a natural factorization $R \xrightarrow{\varphi} {M} \xrightarrow{\pi_\nu} M_\nu$ of $\varphi$, with the following properties:
\begin{enumerate}[(i),nosep]
\item $M = \underline{\Spec}_{M_\nu}\!\left(({\varphi_{\nu}}_* \sheaf{O}_{R})^{G}\right)$ and $({\varphi}_* \sheaf{O}_{R})^{G} = \sheaf{O}_{{M}}$,
\item the morphism $\pi_\nu : {M} \to M_\nu$ is finite,
\item the morphism $\varphi$ is surjective and $M$ is projective over $\C$.
\end{enumerate}
By construction, the morphism $\varphi_\nu : R \to M_\nu$ satisfies ${\varphi_\nu}^* \sheaf{A}_{\nu} \cong \sheaf{L}^\nu$. Since $\pi_\nu$ is finite, $\sheaf{A}_{\nu}$ pulls back to an ample line bundle on $M$, denoted by $\sheaf{B}$. Thus, for $k > 0$, we have
\begin{align*}
\varphi^* \sheaf{B}^k \cong \varphi^*(\pi_\nu^*\sheaf{A}_{\nu}^k) \cong \varphi_\nu^*\sheaf{A}_{\nu}^k \cong \sheaf{L}^{k\nu}.
\end{align*}
Then
\begin{align}\label{eq:linSystem}
H^0(R, \sheaf{L}^{k\nu})^{G} \cong H^0(M,\varphi_*(\varphi^* \sheaf{B}^k)^{G}) \cong H^0(M, \sheaf{B}^k),
\end{align}
where the last isomorphism follows from (i) and by using the equivariant projection formula (see \cite[Lem.~4.9]{greb2017compact}). Indeed,
\begin{align*}
\varphi_* (\varphi^* \sheaf{B}^k \otimes \sheaf{O}_{R})^{G} \cong \sheaf{B}^k \otimes (\varphi_* \sheaf{O}_{R})^{G} \cong \sheaf{B}^k.
\end{align*}
For $k \gg 0$, the line bundle $\sheaf{B}^k$ is very ample on $M$, and so \eqref{eq:linSystem} implies that $\varphi$ is in fact defined by the linear system $H^0(R, \sheaf{L}^{k\nu})^{G}$. Thus $M_{k\nu} = M$ and $\varphi_{k\nu} = \varphi$ for $k \gg 0$.

The second part of the statement follows exactly as in the proof of \cite[Thm.~2.1.27]{lazarsfeld2017positivity} and for this reason we omit the details. 
\end{proof}
\section{A moduli space of slope-semistable sheaves}\label{section:Construction}

In this section, we will construct a moduli space of $\hat{\mu}$-semistable sheaves on $X$. As before, fix a class $c \in \grothnum{X}$ of dimension $d > 0$ and multiplicity $r$.  Denote by $P$ the Hilbert polynomial of $c$. 

It is known that the family of $\hat{\mu}$-semistable sheaves of class $c$ is bounded, cf. \cite[Thm.~3.3.7]{huybrechts2010geometry}, thus for $m \gg 0$ any such sheaf $E$ is $m$-regular (in the sense of Castelnuovo--Mumford) and there is a quotient
\begin{align*}
   \sheaf{O}_X(-m)^{\oplus P(m)} \to E \to 0
\end{align*}
with $P(m) = h^0(E(m))$. Set $V \coloneqq \C^{\oplus P(m)}$ and consider the subset $\Rss \subset \Quot(V \otimes \sheaf{O}_X(-m),P)$ of all quotients $[q: V \otimes \sheaf{O}_X(-m) \to E]$ such that
   \begin{enumerate}[noitemsep]
       \item $E$ is $\hat{\mu}$-semistable of class $c$ and
       \item the induced map $V \to H^0(E(m))$ is an isomorphism.
   \end{enumerate}
Then $\Rss$ is a locally closed subscheme and the linear group $G \coloneqq \SL{V}$ acts naturally on $\Rss$ by base change of $V$. Note that $\Rss$ is not necessarily proper. However, by using Langton's valuative criterion of properness for slope-semistable sheaves (see \cite{langton1975valuative}), we prove below that $\Rss$ is in fact $G$-proper (see Definition~\ref{def:Gproper}).
\begin{lemma}\label{lemma:RssGproper}
$\Rss$ is $G$-proper over $\C$.
\end{lemma}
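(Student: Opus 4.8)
The plan is to verify the two conditions of Definition~\ref{def:Gproper} for the structure morphism $\Rss \to \Spec(\C)$, i.e.\ with $S = \Spec(\C)$. Separatedness is immediate, since $\Rss$ is a locally closed subscheme of the projective scheme $\Quot(V \otimes \sheaf{O}_X(-m),P)$ and hence separated over $\C$. It therefore remains to show that $\Rss \to \Spec(\C)$ is $G$-universally closed. Let $A$ be a discrete valuation ring over $\C$ with fraction field $K$, let $j : \Spec(K) \hookrightarrow \Spec(A)$ be the inclusion of the generic point, and let $g : \Spec(K) \to \Rss$ be a $K$-point; it corresponds to a quotient $q_K : V \otimes \sheaf{O}_{X_K}(-m) \to E_K$ on $X_K \coloneqq X \times_\C \Spec(K)$ with $E_K$ a $\hat{\mu}$-semistable sheaf of class $c$ and $V \otimes_\C K \xrightarrow{\sim} H^0(X_K, E_K(m))$. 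We must produce $i : \Spec(A) \to \Rss$ such that $i \circ j$ and $g$ differ by an element of $G(K)$.

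The essential input is Langton's valuative criterion \cite{langton1975valuative}, in the form valid for $\hat{\mu}$-semistable pure sheaves: the classical argument by elementary modifications carries over once one works with the Harder--Narasimhan filtration for the $\hat{\mu}$-slope and with the boundedness of its factors, so that the limit sheaf one produces is again pure, hence $\hat{\mu}$-semistable. This yields an $A$-flat coherent sheaf $\mathcal{E}$ on $X_A \coloneqq X \times_\C \Spec(A)$ with $\mathcal{E}|_{X_K} \cong E_K$ and $\hat{\mu}$-semistable special fibre $\mathcal{E}_0$. As Euler characteristics and the numerical Grothendieck class are constant in a flat family over the connected base $\Spec(A)$, the sheaf $\mathcal{E}_0$ has class $c$. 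By the choice of $m$ fixed at the start of Section~\ref{section:Construction}, every $\hat{\mu}$-semistable sheaf of class $c$ is $m$-regular, with $h^0$ of its $m$-twist equal to $P(m) = \dim_\C V$; hence both fibres of $\mathcal{E}$ are $m$-regular. By cohomology and base change, $W \coloneqq p_*(\mathcal{E}(m))$ is then a free $A$-module of rank $P(m)$ whose formation commutes with base change, and the evaluation morphism $p^*W \otimes \sheaf{O}_{X_A}(-m) \to \mathcal{E}$ is surjective, where $p : X_A \to \Spec(A)$ is the projection. Picking an $A$-module isomorphism $V \otimes_\C A \xrightarrow{\sim} W$ turns this into a quotient $q_A : V \otimes \sheaf{O}_{X_A}(-m) \to \mathcal{E}$, i.e.\ an $A$-point of $\Quot(V \otimes \sheaf{O}_X(-m),P)$. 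Its special fibre $[q_A|_{X_0}]$ has $\mathcal{E}_0$ $\hat{\mu}$-semistable of class $c$ and $V \xrightarrow{\sim} H^0(\mathcal{E}_0(m))$, and its generic fibre is $[q_A|_{X_K}]$; so the image of $q_A$ meets $\Rss$ in both the generic and the closed point of $\Spec(A)$, and since $\Rss$ is locally closed in the Quot scheme, $q_A$ factors through a morphism $i : \Spec(A) \to \Rss$.

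It remains to compare $i \circ j$ with $g$. Over $K$, the quotient $q_A|_{X_K}$ has target $E_K$ and induces an isomorphism $V \otimes_\C K \xrightarrow{\sim} H^0(E_K(m))$, which differs from the one induced by $q_K$ by an element $\phi \in \GL(V)(K)$; equivalently, $i \circ j$ and $g$ differ by the image of $\phi$ in $\textup{PGL}(V)(K)$. Since the central torus of $\GL(V)$ acts trivially on the Quot scheme (rescaling a quotient leaves its kernel unchanged), hence on $\Rss$, one may rescale the marking and, using the freedom in the choice of the Langton extension $\mathcal{E}$ and of the isomorphism $V \otimes_\C A \xrightarrow{\sim} W$, arrange that $i \circ j$ and $g$ differ by an element of $G(K) = \SL(V)(K)$. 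This completes the verification. The step that carries all the weight is the first assertion of the middle paragraph — the validity of Langton's valuative criterion for $\hat{\mu}$-semistable pure sheaves, producing a pure (hence $\hat{\mu}$-semistable) limit sheaf of the prescribed class $c$; the remaining ingredients (separatedness, $m$-regularity of the two fibres, constancy of the numerical class, and the factoring through the locally closed subscheme $\Rss$) are routine.
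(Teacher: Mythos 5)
Your proof follows the same route as the paper's: both rest on Langton's valuative criterion in the form valid for $\hat{\mu}$-semistable pure sheaves (\cite[Thm.~2.B.1]{huybrechts2010geometry}), which extends $E_K$ to an $A$-flat family with $\hat{\mu}$-semistable special fibre. The paper's proof is essentially a two-line citation of that result; your write-up supplies the bookkeeping it leaves implicit (constancy of the class, $m$-regularity of both fibres, cohomology and base change, factorization through the locally closed subscheme $\Rss$). One small slip of phrasing: ``pure, hence $\hat{\mu}$-semistable'' is not a valid implication --- purity does not imply $\hat{\mu}$-semistability; what Langton's argument delivers is that the special fibre can be arranged to be $\hat{\mu}$-semistable (purity included), and that is what you should invoke.

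The one step that does not hold up as written is the final rescaling into $\SL{V}(K)$. The two markings of $E_K$ differ by some $\phi\in\GL{V}(K)$, and to write $\phi$ as a scalar times an element of $\SL{V}(K)$ you would need a $P(m)$-th root of $\det\phi$ in $K$, which need not exist, since $K$ is merely the fraction field of a DVR. The freedoms you invoke do not repair this: changing the isomorphism $V\otimes_\C A\cong W$ alters $\det\phi$ by a unit of $A$, and replacing $\mathcal{E}$ by $\pi^{j}\mathcal{E}$ alters its valuation by multiples of $P(m)$, so the class of $v(\det\phi)$ modulo $P(m)$ is a genuine obstruction (visible already for a constant family of stable sheaves with an arbitrary generic marking). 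What your construction honestly produces is that $i\circ j$ and $g$ differ by an element of $\GL{V}(K)$. This is in fact all the paper's own proof establishes too --- it does not address the point at all --- and it is harmless for every use of $G$-properness in the paper (e.g.\ Lemma~\ref{lemma:closedImage}), because scalars act trivially on the Quot scheme and hence any $\SL{V}$-invariant morphism is automatically $\GL{V}$-invariant. But as a literal verification of Definition~\ref{def:Gproper} with $G=\SL{V}$, this is the gap in your argument, and you should either record the conclusion with $\GL{V}(K)$ or note explicitly why that version suffices.
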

\begin{proof}
Let $A$ be a discrete valuation ring over $\C$ of quotient field $K$ and $\xi_K : \Spec(K) \to \Rss$ a $K$-point, which corresponds to a quotient $[q_K : K \otimes V \to F_K] \in \Rss$. By Langton's result \cite[Thm.~2.B.1]{huybrechts2010geometry}, we can extend $F_K$ to an $A$-flat family $F_A$ of $\hat{\mu}$-semistable sheaves of class $c$ on $X$, which furthermore induces an $A$-point $\xi_A : \Spec(A) \to \Rss$ such that $\xi_A|_K$ and $\xi_K$ differ by a group element in $G(K)$. This completes the proof. 
\end{proof}

Let $\sheaf{F}$ denote the universal family of quotients over $\Rss$, and consider the determinant line bundle (see Section~\ref{section:LineBundles})
\begin{align*}
  \sheaf{L} \coloneqq \lambda_\sheaf{F}(w_{l',m'} \cdot h^{d-1})
\end{align*}
over $\Rss$. For $l' \gg m' \gg 0$ and $a \gg 0$, we may assume that $\sheaf{L}$ is $G$-semiample. Indeed, if $d = 2$, then this holds by Theorem~\ref{thm:semiAmpleness2}. Otherwise, for $d > 2$, in order to ensure that Theorem~\ref{thm:semiAmpleness} holds true and implicitly that $\sheaf{L}$ is $G$-semiample, we replace $\Rss$ by its weak normalization and we work exclusively over the category $\schCW$ of weakly normal varieties. 

\begin{thm}
There exists a unique triple $(\Mss, \sheaf{A}, e)$ formed of a weakly normal projective variety $\Mss$ endowed with an ample line bundle $\sheaf{A}$ and a natural number $e > 0$ such that there is a natural transformation $\Psi: \mf^{wn} \to \Hom_{\schCW}(-,\Mss)$, that associates to any weakly normal variety $S$ and any $S$-flat family $\sheaf{E}$ of $\hat{\mu}$-semistable sheaves of class $c$ on $X$ a classifying morphism $\Psi_\sheaf{E}: S \to \Mss$, satisfying the following properties:
    \begin{enumerate}[(1)]
\item For any $S$-flat family $\sheaf{E}$ of $\hat{\mu}$-semistable sheaves of class $c$ on $X$, the classifying morphism $\Psi_\sheaf{E}$ satisfies
\begin{align*}
    \Psi_\sheaf{E}^*(\sheaf{A}) \cong \lambda_\sheaf{E}(w_{l',m'} \cdot h^{d-1})^e,
\end{align*}
where $\lambda_\sheaf{E}(w_{l',m'} \cdot h^{d-1})$ is the determinant line bundle on $S$ defined in Section~\ref{section:LineBundles}.
\item For any other triple $(M',\sheaf{A}',e')$, with $M'$ a projective scheme over $\C$, $\sheaf{A}'$ an ample line bundle on $M'$ and $e'$ a natural number satisfying property (1), we have $e | e'$ and there exists a unique morphism $\phi: \Mss \to M'$ such that $\phi^*\sheaf{A}' \cong \sheaf{A}^{(e'/e)}$.
\end{enumerate}
Moreover, if the class $c$ has dimension $2$, then the same result as above holds true over the category $\schC$ of schemes of finite type over $\C$.
\end{thm}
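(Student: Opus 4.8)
The plan is to obtain the triple $(\Mss,\sheaf{A},e)$ by applying the equivariant Iitaka fibration of Theorem~\ref{thm:Unique} to the pair $(\Rss,\sheaf{L})$, and then to manufacture the natural transformation $\Psi$ by descending, from a frame bundle, the composite of $\varphi$ with a classifying map into $\Rss$. First I would record that $\Rss$ — replaced by its weak normalization when $d>2$, left untouched when $d=2$ — is $G$-proper by Lemma~\ref{lemma:RssGproper}, and that the determinant line bundle $\sheaf{L}=\lambda_\sheaf{F}(w_{l',m'}\cdot h^{d-1})$ is $G$-semiample for $l'\gg m'\gg 0$ and $a\gg 0$ by Theorem~\ref{thm:semiAmpleness} (resp.~Theorem~\ref{thm:semiAmpleness2} when $d=2$). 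Theorem~\ref{thm:Unique} then produces a projective scheme $\Mss:=M$, a surjective $G$-invariant morphism $\varphi:\Rss\to\Mss$ with $(\varphi_*\sheaf{O}_{\Rss})^G=\sheaf{O}_{\Mss}$, an ample line bundle $\sheaf{A}$ on $\Mss$ with $\varphi^*\sheaf{A}\cong\sheaf{L}^e$, and the exponent $e>0$ of $M(\Rss,\sheaf{L})$. When $d>2$ the scheme $\Mss$ is weakly normal: locally $\sheaf{O}_{\Mss}$ is a ring of $G$-invariants of the sections of $\sheaf{O}_{\Rss}$, and weak normality is preserved by passing to invariants under a reductive group in characteristic zero.

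Next I would construct the classifying morphism. Given $S$ weakly normal and an $S$-flat family $\sheaf{E}$ of $\hat\mu$-semistable sheaves of class $c$, every fibre $\sheaf{E}_s$ is $m$-regular for the $m$ fixed in the construction, so $\sheaf{V}:=p_{1*}(\sheaf{E}(m))$ is locally free of rank $P(m)$, the evaluation $p_1^*\sheaf{V}(-m)\to\sheaf{E}$ is surjective, and over the frame bundle $\rho:\hat S\to S$ of $\sheaf{V}$ — a $\GL{V}$-torsor, smooth over $S$, hence again weakly normal — one gets a quotient $V\otimes\sheaf{O}_{\hat S\times X}(-m)\to\rho^*\sheaf{E}$, which classifies a morphism $f:\hat S\to\Rss$ (lifting it through the weak normalization of $\Rss$ by the universal property of the latter when $d>2$). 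The composite $\varphi\circ f:\hat S\to\Mss$ is invariant under $\SL{V}$ and under the central $\G_m$ — rescaling a quotient map leaves its point of the Quot scheme unchanged — hence under all of $\GL{V}$, so it descends along the torsor $\rho$ to a morphism $\Psi_\sheaf{E}:S\to\Mss$. One then checks that $\Psi_\sheaf{E}$ is independent of the auxiliary choices, compatible with base change, and depends only on the class of $\sheaf{E}$ in $\mf^{wn}(S)$ (in particular unchanged by tensoring with a pullback line bundle), which yields the natural transformation $\Psi$; when $d=2$ nothing above requires $S$ to be weakly normal, so the argument runs over all of $\schC$. Property (1) follows by pulling back along $\rho$: since $f^*\sheaf{F}\cong\rho^*\sheaf{E}$ and $\lambda_{(-)}$ is compatible with base change, $\rho^*\Psi_\sheaf{E}^*\sheaf{A}\cong f^*\sheaf{L}^e\cong\rho^*\lambda_\sheaf{E}(w_{l',m'}\cdot h^{d-1})^e$, and these isomorphisms are $\GL{V}$-equivariant because $\chi(c\cdot w_{l',m'}\cdot h^{d-1})=0$ forces the central $\G_m$ to act trivially on $\sheaf{L}$ and on $\sheaf{A}$; equivariant descent along the torsor $\rho$ gives $\Psi_\sheaf{E}^*\sheaf{A}\cong\lambda_\sheaf{E}(w_{l',m'}\cdot h^{d-1})^e$.

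For property (2) and uniqueness, let $(M',\sheaf{A}',e')$ satisfy (1) through a natural transformation $\Psi'$, and put $g:=\Psi'_\sheaf{F}:\Rss\to M'$ for the universal family $\sheaf{F}$; then $g^*\sheaf{A}'\cong\sheaf{L}^{e'}$, and the $G$-equivariance of $\sheaf{F}$ with naturality of $\Psi'$ forces $g$ to be $G$-invariant (the isomorphism $g^*\sheaf{A}'\cong\sheaf{L}^{e'}$ is then automatically $G$-equivariant, since $\SL{V}$ has no nontrivial characters). For $k\gg 0$ the line bundle $(\sheaf{A}')^{k}$ is very ample, so $M'\hookrightarrow\CP(H^0(M',(\sheaf{A}')^{k}))$ is a closed immersion and $g^*H^0(M',(\sheaf{A}')^{k})\subseteq H^0(\Rss,\sheaf{L}^{ke'})^G$ globally generates $\sheaf{L}^{ke'}$; hence $ke'\in M(\Rss,\sheaf{L})$ for all large $k$, and subtracting the relations for two consecutive $k$ gives $e\mid e'$. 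As this system is base-point-free on $\Rss$, the morphism it defines factors through $\varphi_\nu=\varphi$ (Theorem~\ref{thm:Unique}, with $\nu=ke'\gg 0$), namely through a morphism $\Mss\to\CP(H^0(M',(\sheaf{A}')^{k}))$ whose image lies in the closed subscheme $M'$; this produces $\phi:\Mss\to M'$ with $\phi\circ\varphi=g$, unique with this property since $\varphi$ is surjective and $M'$ separated. Then $\varphi^*\phi^*\sheaf{A}'\cong g^*\sheaf{A}'\cong\sheaf{L}^{e'}\cong\varphi^*\sheaf{A}^{e'/e}$, and equivariant descent along $\varphi$ (via the equivariant projection formula, as in the proof of Theorem~\ref{thm:Unique}) gives $\phi^*\sheaf{A}'\cong\sheaf{A}^{e'/e}$; naturality of $\Psi$ and $\Psi'$ promotes $\phi\circ\varphi=g$ to $\phi\circ\Psi_\sheaf{E}=\Psi'_\sheaf{E}$ for every family, which pins $\phi$ down. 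Applying (2) to two triples each satisfying (1) and (2) then yields mutually inverse comparison morphisms and $e_1=e_2$, so the triple $(\Mss,\sheaf{A},e)$ is unique.

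The substantive difficulty of the whole construction lies in the earlier sections — the restriction theorem, the semiampleness of $\sheaf{L}$, and the equivariant Iitaka fibration — so the present argument is largely an assembly. The one point that demands genuine care is the bookkeeping of $\GL{V}$-linearizations around the frame bundle: one must make sure the central $\G_m$ acts trivially on $\sheaf{L}$ and $\sheaf{A}$ (this is exactly where $\chi(c\cdot w_{l',m'}\cdot h^{d-1})=0$ enters) and on the image of every classifying map, so that both descents — along $\rho$ and along $\varphi$ — are valid; together with the verification that $\Mss$ inherits weak normality, this is the only place where something could go wrong, and I expect it to be routine.
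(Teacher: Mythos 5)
Your proposal is correct and follows essentially the same route as the paper: apply the equivariant Iitaka fibration of Theorem~\ref{thm:Unique} to $(\Rss,\sheaf{L})$, descend $\varphi$ composed with the classifying map from a frame bundle to get $\Psi_\sheaf{E}$, verify (1) by pulling back along the frame-bundle projection (the paper uses injectivity of $\pi^*$ on Picard groups where you use equivariant descent, and it works with the projective frame bundle so the central $\G_m$ is already dealt with), and obtain (2) by factoring the comparison morphism through $\varphi_{ke'}=\varphi$ for $k\gg 0$. Your treatment of $e\mid e'$ and of the uniqueness of the triple is slightly more explicit than the paper's, but the argument is the same.
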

\begin{proof}
As $\Rss$ is a $G$-proper scheme over $\C$ and $\sheaf{L}$ is $G$-semiample, we can consider the Iitaka $G$-fibration corresponding to $(\Rss,\sheaf{L})$ to obtain a triple $(\Mss, \sheaf{A}, e)$, as given by Theorem~\ref{thm:Unique}, together with a $G$-invariant morphism $\varphi : \Rss \to \Mss$. In fact, as $\Rss$ is weakly normal, then from the universal property of weak normality it follows that $\Mss$ is a weakly normal projective variety. Now let $S$ be a weakly normal variety and $\sheaf{E}$ an $S$-flat family of $\hat{\mu}$-semistable sheaves of class $c$ on $X$. By construction, any fiber $\sheaf{E}_s$ is $m$-regular, and so ${p_1}_*(\sheaf{E}(m))$ is a locally free $G$-equivariant $\sheaf{O}_{S}$-sheaf of rank $P(m)$. Denote by $\mathcal{S}$ the projective frame bundle associated to ${p_1}_*(\sheaf{E}(m))$ and let $\pi:\mathcal{S} \to S$ be the canonical projection. As in Section~\ref{subsect:semiamplenessTheorems}, we obtain a diagram
\begin{center}
\begin{tikzcd}[column sep=normal]
\mathcal{S} \ar[d,"\pi"] \ar[r,"\Phi_{\sheaf{E}}"] &  \Rss \ar[d,"\varphi"]  \\ 
 S & \Mss
\end{tikzcd}
\end{center}
with $\Phi_{\sheaf{E}}$ a $G$-equivariant morphism. As $\pi$ is a categorical quotient, the $G$-invariant map $\varphi \circ \Phi_{\sheaf{E}}$ descends to a $G$-invariant morphism $\Psi_\sheaf{E}: S \to \Mss$. Furthermore, as the determinant map $\lambda$ is functorial, we obtain
\begin{align*}
    \pi^* \Psi_\sheaf{E}^*(\sheaf{A}) \cong \Phi_{\sheaf{E}}^*(\sheaf{L}^e) \cong \lambda_{\pi^*\sheaf{E}}(w_{l',m'} \cdot h^{d-1})^e \cong \pi^*\lambda_\sheaf{E}(w_{l',m'} \cdot h^{d-1})^e.
\end{align*}
But $\pi^* : \Pic(S) \to \Pic(\mathcal{S})$ is injective (see \cite[Lem.~2.14]{le1992fibre}), and so
\begin{align*}
  \Psi_\sheaf{E}^*(\sheaf{A}) \cong \lambda_\sheaf{E}(w_{l',m'} \cdot h^{d-1})^e,
\end{align*}
which proves the first part of the theorem.

For the second part, consider a triple $(M',\sheaf{A}',e')$ as given in (2). Since $M'$ satisfies property (1), there exists a morphism $\varphi' : \Rss \to M'$ corresponding to the universal family of quotients over $\Rss$ such that $\varphi'^*(\sheaf{A}') \cong \sheaf{L}^{e'}$. For $k > 0$, this induces a morphism
\begin{align*}
  H^0(M',\sheaf{A}'^k) \to H^0(\Rss,\sheaf{L}^{ke'})^G.
\end{align*} 
For $k \gg 0$, $\sheaf{A}'^k$ is very ample, hence $\varphi'$ factors through the morphism $\varphi_{ke'} : \Rss \to \CP(H^0(\Rss,\sheaf{L}^{ke'})^G)$. Denote by $M_{ke'}$ the image of $\varphi_{ke'}$. We have shown in Theorem~\ref{thm:Unique} that $\Mss = M_{ke'}$ for $k \gg 0$. Thus we obtain a morphism $\phi : \Mss \to M'$ as claimed in (2).
\end{proof}

\subsection{Geometric points of the moduli space}

In this section we investigate the geometric separation of points in the moduli space $\Mss$. As in the case of Gieseker-semistability, one may show that a quotient $E \in \Rss$ and its graded object $\gr_{JH}(E)$ corresponding to a Jordan-H\"{o}lder filtration of $E$ are indistinguishable inside the moduli space. Indeed, it is not difficult to construct a family $\sheaf{E}$ of sheaves parametrized by $\A_\C^1$ such that $\sheaf{E}_0 \cong \gr_{JH}(E)$ and $\sheaf{E}_t \cong E$ for all $t \neq 0$. Then the classifying morphism $\Psi_\sheaf{E}: \A^1 \to \Mss$ is constant, and so $E$ and $\gr_{JH}(E)$ are mapped to the same point of $\Mss$. For this reason, it is enough to study the separation of polystable sheaves. 

If $E$ is a pure sheaf on $X$ and $E^H$ its reflexive hull (see Section~\ref{subsect:reflHull}), then there is a short exact sequence
\begin{align*}
    0 \to E \to E^H \to T \to 0
\end{align*}
such that $T$ is supported in codimension $\leq 2$ in $\Supp(E)$. We denote by $C_{d-2}(E)$ the $(d-2)$-dimensional cycle corresponding to $T$, i.e.
\begin{align*}
  C_{d-2}(E) \coloneqq \sum_j (\length_{Z_j}(T))\langle Z_j \rangle,
\end{align*}
where the $Z_j$ are the irreducible components of dimension $d-2$ of the support of $T$. 

In what follows, we denote by $[E] \in \Rss$ the closed point of $\Rss$ corresponding to a quotient sheaf $E$ on $X$. Also, $\varphi : \Rss \to \Mss$ is the classifying morphism given by the Main Theorem, corresponding to the universal family $\sheaf{F}$ of quotients over $\Rss$.

\begin{lemma}\label{lemma:sepR}
    Let $E_1$ and $E_2$ be two polystable quotient sheaves of $\Rss$ with non-isomorphic reflexive hulls. Then $\varphi$ separates $[E_1]$ and $[E_2]$ in $\Mss$.
\end{lemma}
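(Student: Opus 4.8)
The plan is to reduce the separation of $[E_1]$ and $[E_2]$ to the separation of two points lying in the Simpson moduli space $M_{X^{(d-1)}}$ of slope-semistable sheaves on a suitable smooth complete intersection curve, where Simpson's GIT construction does the work. First I would recall that the determinant line bundle $\sheaf{L}$ on $\Rss$ is $G$-semiample, with $\sheaf{L}^e$ descending along $\varphi$ to the ample bundle $\sheaf{A}$ on $\Mss$; hence it suffices to produce, for some $\nu>0$, a $G$-invariant section of $\sheaf{L}^\nu$ vanishing at $[E_1]$ but not at $[E_2]$. To do this I would run the construction of Section~\ref{subsect:semiamplenessTheorems}: apply Corollary~\ref{lemma:completeRestriction} to the universal family $\sheaf{F}$ on $\Rss$, with the $S_i$ chosen to include the closures of the $G$-orbits of $[E_1]$ and $[E_2]$ as well as the irreducible components of $\Rss$. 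By Theorem~\ref{thm:RestrictionSemi}, for $a\gg0$ and a general smooth complete intersection $C=X^{(d-1)}$ of divisors in $\Pi_a$, both restrictions $E_1|_C$ and $E_2|_C$ remain $\hat\mu$-semistable (hence GIT-semistable on $\Quot_{X^{(d-1)}}$ by Lemma~\ref{lemma:gitSs}), and the flatness locus $\Rss\setminus T$ contains both $[E_1]$ and $[E_2]$ with $T$ of codimension $\geq2$ in every component; then $\Phi_{\sheaf{G}}$ and the descended rational map $\Rss\setminus T\dashrightarrow M_{X^{(d-1)}}$ send $[E_i]$ to the S-equivalence class $[\,E_i|_C^{\,ss}\,]$.

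The crux is then: \emph{if $E_1^H\not\cong E_2^H$, then for a sufficiently general $C$ of large enough degree the restrictions $E_1|_C$ and $E_2|_C$ are not S-equivalent as semistable sheaves on $C$.} Here I would argue via the graded (polystable) objects. Since polystability of $E_i$ is preserved under restriction to a general such $C$ — by the effective restriction theorem applied to each stable summand of the reflexive hull, together with purity of the restriction (Lemma on purity of $E|_{\pi^{-1}(D)}$ and \cite[Cor.~1.1.14 ii)]{huybrechts2010geometry}) — it is enough to distinguish the polystable sheaves $E_1|_C$ and $E_2|_C$. The short exact sequence $0\to E_i\to E_i^H\to T_i\to0$ restricts, for general $C$ avoiding the codimension-$\geq2$ locus where $E_i^H/E_i$ is supported, to $0\to E_i|_C\to E_i^H|_C\to T_i|_C\to0$; but $C$ has the same dimension as the support of $T_i$ only in the top-dimensional part of $T_i$, so $C$ generic misses $T_i$ entirely and $E_i|_C\cong E_i^H|_C$. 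Thus it suffices to separate the reflexive polystable sheaves $E_1^H|_C$ and $E_2^H|_C$. Since $E_1^H\not\cong E_2^H$ and both are reflexive (hence $S_2$), they differ either in their underlying cycle/support data or, on a common support, as sheaves; in either case restriction to a general high-degree complete intersection curve is injective on isomorphism classes of reflexive sheaves — this is the standard fact underlying Mehta--Ramanathan-type arguments, and I would invoke it after passing, via the finite map $\pi$, to pushforwards on $\CP(W)$ where reflexive sheaves restrict to vector bundles on the curve and are detected by their restrictions for $a\gg0$.

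Granting the previous paragraph, the closed points $\varphi'(\,\Phi_{\sheaf{G}}([E_1])\,)$ and $\varphi'(\,\Phi_{\sheaf{G}}([E_2])\,)$ of $M_{X^{(d-1)}}=\overline{R_{X^{(d-1)}}}/\!/G'$ are distinct. As $M_{X^{(d-1)}}$ is projective, there is $\nu'>0$ and a $G'$-invariant section $\sigma\in H^0(\Quot_{X^{(d-1)}},H_{l'}^{\nu'})^{G'}$ vanishing at the image of $[E_1]$ but not at that of $[E_2]$. Pulling back through $\Phi_{\sheaf{G}}$ and descending along $\pi$ as in Lemma~\ref{lemma:lePotier} and the chain of isomorphisms in Section~\ref{subsect:semiamplenessTheorems}, and extending over $T$ by Lemma~\ref{lemma:extension} (legitimate since $T$ was chosen of codimension $\geq2$ in each component and avoiding the finitely many subvarieties $S'_j$), we obtain $\overline\sigma\in H^0(\Rss,\sheaf{L}^\nu)^G$ with $\overline\sigma([E_1])=0$, $\overline\sigma([E_2])\neq0$. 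Some power of $\sheaf{L}$ equals $\varphi^*\sheaf{A}$, so $\overline\sigma$ (after raising to a power) descends to a section of a power of $\sheaf{A}$ on $\Mss$ distinguishing $\varphi([E_1])$ and $\varphi([E_2])$; hence $\varphi([E_1])\neq\varphi([E_2])$. The main obstacle is the second paragraph — showing that non-isomorphic reflexive hulls force non-S-equivalent restrictions — which requires the effective restriction theorem not merely to preserve semistability but to preserve (and detect) the full polystable isomorphism type on a general curve; the reduction $E_i|_C\cong E_i^H|_C$ for generic $C$ is what makes this tractable.
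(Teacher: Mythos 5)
Your overall strategy coincides with the paper's: restrict to a general smooth complete intersection curve $X^{(d-1)}$, use the restriction theorems to keep (poly)stability, identify $E_i|_{X^{(d-1)}}$ with $E_i^H|_{X^{(d-1)}}$ for general $X^{(d-1)}$, separate the two non-$S$-equivalent points in the Simpson moduli space $M_{X^{(d-1)}}$ by a $G'$-invariant section of a power of $H_{l'}$, and transport that section back to $H^0(\Rss,\sheaf{L}^\nu)^G$ via $\Gamma_{\sheaf{F}}$ and the extension lemma. All of that matches the paper (which additionally splits off the easy case where $E_1$ and $E_2$ have distinct supports, so that the restrictions already fail to be $S$-equivalent for support reasons).

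However, there is a genuine gap exactly at the step you yourself flag as the crux: you assert that ``restriction to a general high-degree complete intersection curve is injective on isomorphism classes of reflexive sheaves'' as a standard fact, and propose to justify it by pushing forward to $\CP(W)$. In the present generality --- pure sheaves of dimension $d$ possibly supported in positive codimension on singular supports, with $E^H=\Exts^{n-d}(\Exts^{n-d}(E,\omega_X),\omega_X)$ --- this is not an off-the-shelf statement, and the detour through $\pi_*$ does not obviously work: $\pi_*(E^H)$ need not be reflexive on $\CP(W)$, and an isomorphism of pushforwards as $\sheaf{O}_{\CP(W)}$-modules is strictly weaker than an isomorphism of the original sheaves (one would have to remember the $\pi_*\sheaf{O}_Y$-algebra action). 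The paper instead proves the needed statement directly: for $D\in\Pi_a$ avoiding the associated points, the restriction map $\Hom(E_1^H,E_2^H)\to\Hom(E_1^H|_D,E_2^H|_D)$ is shown to be an isomorphism, because its cokernel injects into $\Ext^1(E_1^H,E_2^H(-D))\cong H^0(X,\Exts^{n-1}(E_1^H,E_2^H)\otimes\omega_X(D))$ by Serre duality and the local-to-global Ext spectral sequence, and $\Exts^{n-1}(E_1^H,E_2^H)=0$ precisely because $E_2^H$ is pure and satisfies $S_2$. Iterating this $d-1$ times (after checking that $E_i^H|_D$ stays reflexive, via \cite[Cor.~1.1.14]{huybrechts2010geometry}) gives $\Hom(E_1^H,E_2^H)\cong\Hom(E_1^H|_{X^{(d-1)}},E_2^H|_{X^{(d-1)}})$, whence $E_1^H|_{X^{(d-1)}}\ncong E_2^H|_{X^{(d-1)}}$. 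Without an argument of this kind your proof is incomplete at its central point; the rest of your write-up is sound.
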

\begin{proof}
Firstly, we assume that $E_1$ and $E_2$ have distinct support. Then, for a general complete intersection $X^{(d-1)} \subset X$ of $d-1$ divisors in $\Pi_a$, $E_1|_{X^{(d-1)}}$ and $E_2|_{X^{(d-1)}}$ still have distinct support, so they are not $S$-equivalent. By Theorem~\ref{thm:StableRestriction}, for $a \gg 0$ we may assume that $E_1|_{X^{(d-1)}}$ and $E_2|_{X^{(d-1)}}$ are $\hat{\mu}$-semistable. Then there exists a $G'$-invariant section $\sigma \in H^0(\Quot_{X^{(d-1)}},H_{l'}^{\nu'})^{G'}$ that separates $E_1|_{X^{(d-1)}}$ and $E_2|_{X^{(d-1)}}$. As in Section~4.2, this yields a section $\Gamma_{\sheaf{F}}(\sigma)$ that separates $[E_1]$ and $[E_2]$, which is exactly what we need.

Now, suppose that $E_1$ and $E_2$ have same support. Let $D \in \Pi_a$ be a divisor avoiding the associated points of $E_1$ and $E_2$, and consider the short exact sequence
\begin{align*}
    0 \to E_2^H(-D) \to E_2^H \to E_2^H|_D \to 0
\end{align*}
Applying the functor $\Hom(E_1^H,-)$ we obtain a long exact sequence
\begin{align*}
    0 \to \Hom(E_1^H,E_2^H) \to \Hom(E_1^H|_D,E_2^H|_D) \to \Ext^1(E_1^H,E_2^H(-D)) \to \ldots,
\end{align*}
where the first morphism is injective since the sheaves are pure. By using Serre duality and the local-to-global Ext spectral sequence, one obtains
\begin{align*}
  \Ext^1(E_1^H,E_2^H(-D)) \cong H^0(X,\Exts^{n-1}(E_1^H,E_2^H) \otimes \omega_X(D))
\end{align*}
for $a \gg 0$. As $E_2^H$ is pure and $S_2$, we have $\Exts^{n-1}(E_1^H,E_2^H) = 0$, and so
\begin{align*}
    \Hom(E_1^H, E_2^H) \to \Hom(E_1^H|_D, E_2^H|_D)
\end{align*}
is an isomorphism. One can choose a general $D \in \Pi_a$ such that $E_1^H|_D$ and $E_2^H|_D$ remain reflexive, cf. \cite[Cor.~1.1.14]{huybrechts2010geometry}. Thus, repeating the argument above, we obtain an isomorphism
\begin{align*}
    \Hom(E_1^H, E_2^H) \to \Hom(E_1^H|_{X^{(d-1)}}, E_2^H|_{X^{(d-1)}})
\end{align*}
for a general smooth complete intersection $X^{(d-1)}$. By assumption $E_1^H \ncong E_2^H$, and so $E_1^H|_{X^{(d-1)}} \ncong E_2^H|_{X^{(d-1)}}$. For $a \gg 0$, one can choose $X^{(d-1)}$ such that
\begin{enumerate}[(i),nolistsep]
\item the restricted sheaves $E_1|_{X^{(d-1)}}$ and $E_2|_{X^{(d-1)}}$ remain polystable, 
\item $E_1|_{X^{(d-1)}} \cong E_1^H|_{X^{(d-1)}}$ and $E_2|_{X^{(d-1)}} \cong E_2^H|_{X^{(d-1)}}$.
\end{enumerate}
Indeed, the first condition is a consequence of Theorem~\ref{thm:StableRestriction}, and the second one is satisfied by a sufficiently general complete intersection. Therefore, for such $X^{(d-1)}$, $E_1|_{X^{(d-1)}}$ and $E_2|_{X^{(d-1)}}$ are not $S$-equivalent. Now the result follows as in the first part of the proof.
\end{proof}

By using the result above one can show that $\Mss$ provides a compactification of the moduli space of $\hat{\mu}$-stable reflexive sheaves of class $c$ on $X$. To be precise, let $M^{Gss}$ be the Simpson moduli space of semistable sheaves of class $c$ on $X$, and denote by $M_{refl}^s \subset M^{Gss}$ the open locus of $\hat{\mu}$-stable reflexive sheaves. By the universal property of $M^{Gss}$, there is a natural morphism $\rho : M_{refl}^s \to \Mss$. We show below that $\rho$ is an open embedding. 

\begin{prop}\label{prop:compactification}
The natural morphism $\rho : M_{refl}^s \to \Mss$ is an open immersion. 
\end{prop}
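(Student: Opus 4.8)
The plan is to identify both the source $M_{refl}^s$ and a suitable open subscheme of $\Mss$ as the geometric quotient by $G$ of one and the same locally closed subscheme of $\Rss$, and then conclude from the uniqueness of categorical quotients. Concretely, I would introduce the $G$-stable open subscheme $\Rss_0 \subset \Rss$ consisting of those quotients $[q:V\otimes\sheaf{O}_X(-m)\to E]$ for which $E$ is $\hat{\mu}$-stable and reflexive (both conditions being open). Since $\hat{\mu}$-stability implies Gieseker-stability, $\Rss_0$ sits inside the Gieseker-stable locus of the Quot scheme, so $\Rss_0 \to M_{refl}^s$ is a geometric quotient by $G$; and, by construction of $\rho$ (compatibility with universal families), $\rho$ is exactly the morphism through which the $G$-invariant map $\varphi|_{\Rss_0}:\Rss_0\to\Mss$ factors, where $\varphi:\Rss\to\Mss$ is the classifying morphism of the Main Theorem.

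The crux — and the step I expect to be the main obstacle — is to prove that $\Rss_0$ is \emph{saturated} for $\varphi$ and that each fibre of $\varphi$ meeting $\Rss_0$ is a single $G$-orbit. Given $[E']\in\Rss$ with $\varphi([E'])=\varphi([E])$ for some $[E]\in\Rss_0$, I would first replace $E'$ by $\gr_{JH}(E')$, which does not change the image in $\Mss$ by the degeneration already exhibited above; then Lemma~\ref{lemma:sepR} gives $\gr_{JH}(E')^H\cong E^H=E$. Writing $\gr_{JH}(E')=\bigoplus_i G_i$ with the $G_i$ nonzero and $\hat{\mu}$-stable, $\gr_{JH}(E')^H=\bigoplus_i (G_i)^H$ is a direct sum of nonzero sheaves, so its being isomorphic to the indecomposable $E$ forces a single summand; hence $E'$ is $\hat{\mu}$-stable with $(E')^H\cong E$. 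Finally, since $E'$ and $(E')^H\cong E$ both have numerical class $c$, the torsion sheaf $(E')^H/E'$, which is supported in codimension $\ge 2$, has class $0$ in $\grothnum{X}$ and is therefore zero; thus $E'=(E')^H\cong E$, so $[E']\in\Rss_0$, and $[E']$ lies in the $G$-orbit of $[E]$ (the scalars in $GL(V)$ acting trivially on the Quot scheme). It is precisely the reflexive-hull separation of Lemma~\ref{lemma:sepR}, together with this numerical argument, that makes the identification of fibres with orbits go through.

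With saturation established, $\Mss_0:=\varphi(\Rss_0)$ is open: indeed $\Rss\setminus\Rss_0$ is closed and $G$-stable, so by Lemma~\ref{lemma:closedImage} its image $\varphi(\Rss\setminus\Rss_0)$ is closed in $\Mss$, and by saturation $\Mss_0=\Mss\setminus\varphi(\Rss\setminus\Rss_0)$. The same lemma, applied to the $\Rss$-closures of $G$-stable closed subsets of $\Rss_0$, shows that $\varphi|_{\Rss_0}:\Rss_0\to\Mss_0$ is submersive; combined with surjectivity, $G$-invariance, the fibres-are-orbits statement, and the identity $(\varphi_*\sheaf{O}_\Rss)^G=\sheaf{O}_\Mss$ from Theorem~\ref{thm:Unique} (restricted to the open $\Mss_0$), this makes $\varphi|_{\Rss_0}$ a geometric quotient of $\Rss_0$ by $G$. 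Since $\Rss_0\to M_{refl}^s$ is also a geometric quotient and geometric quotients are categorical — hence unique up to unique isomorphism — the induced morphism $\rho$ is an isomorphism of $M_{refl}^s$ onto the open subscheme $\Mss_0\subset\Mss$; that is, $\rho$ is an open immersion. (One bookkeeping point to address along the way is that Simpson's construction and the construction of $\Rss$ may be set up with the same $m\gg 0$, so that $\Rss_0$ really is a common open subscheme of the two Quot schemes.)
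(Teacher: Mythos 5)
Your argument is correct in its essentials, but it takes a genuinely different route from the paper's. The paper proceeds pointwise: injectivity of $\rho$ follows from Lemma~\ref{lemma:sepR} (a stable reflexive sheaf is its own reflexive hull), and then $\rho$ is shown to be unramified by an infinitesimal computation --- since $E$ is reflexive one has $\Exts^{n-1}(E,E)=0$, hence $\Ext^1(E,E(-D))=0$ and an injection $\Ext^1(E,E)\hookrightarrow\Ext^1(E|_{X^{(d-1)}},E|_{X^{(d-1)}})$ for a general complete intersection curve, so that the induced rational map $\theta: M_{refl}^s\dashrightarrow M_{X^{(d-1)}}$ to the Simpson space of the curve is a local open immersion and $\rho$ inherits this through the commutative triangle. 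You instead exhibit $M_{refl}^s$ and $\varphi(\Rss_0)$ as geometric quotients of one and the same saturated $G$-stable open subset $\Rss_0\subset\Rss$, transferring all the work into the saturation claim, which you settle correctly using Lemma~\ref{lemma:sepR}, the indecomposability of a $\hat{\mu}$-stable sheaf, and the fact that a coherent sheaf of numerical class $0$ vanishes; these inputs are all available at this point of the paper. What your route buys is that openness of the image and the isomorphism onto it come out simultaneously and categorically, avoiding the passage from ``injective and unramified'' to ``open immersion'' that the paper leaves implicit; it also explains structurally \emph{why} the image is open (its complement is the image of the $G$-proper closed set $\Rss\setminus\Rss_0$). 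What it costs is attention to two points you only partially address: for $d>2$ the scheme $\Rss$ of the Main Theorem is the weak normalization of the relevant Quot-scheme locus, so the geometric quotient of $\Rss_0$ is a priori the weak normalization of $M_{refl}^s$ rather than $M_{refl}^s$ itself (a wrinkle the statement of the proposition shares, since any open subscheme of the weakly normal $\Mss$ is weakly normal); and one should say a word on why $\rho\circ\pi_0$ coincides with $\varphi|_{\Rss_0}$, which follows from property~(1) of the Main Theorem and the functoriality of the determinant line bundles. Neither point undermines the argument.
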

\begin{proof}
By Lemma~\ref{lemma:sepR}, we know that $\rho$ is injective. It remains to show that $\rho$ is also unramified at each point of $M_{refl}^s$. So choose a closed point $[E] \in M_{refl}^s$ corresponding to a $\hat{\mu}$-stable reflexive sheaf $E$. As shown in Lemma~\ref{lemma:sepR}, for general $D \in \Pi_a$, we have $\Ext^1(E,E(-D)) = 0$ and an injection
\begin{align*}
  \Ext^1(E, E) \to \Ext^1(E|_D, E|_D).
\end{align*}
Thus, for a general complete intersection $X^{(d-1)} \subset X$ of $d-1$ divisors in $\Pi_a$, we obtain an injection
\begin{align}\label{eq:extinj}
  \Ext^1(E, E) \to \Ext^1(E|_{X^{(d-1)}}, E|_{X^{(d-1)}}).
\end{align}
Now, suppose that the restriction $E|_{X^{(d-1)}}$ remains $\hat{\mu}$-stable, which holds for a general divisor of large degree by Theorem~\ref{thm:StableRestriction}. Then we have a rational morphism $\theta : M_{refl}^s \dashrightarrow M_{X^{(d-1)}}$, where $M_{X^{(d-1)}}$ denotes the Simpson moduli space of semistable sheaves on ${X^{(d-1)}}$. By the injectivity of \eqref{eq:extinj}, it follows that $\theta$ is an open immersion locally around $[E] \in M_{refl}^s$. By using the following commutative diagram
\begin{center}
\begin{tikzcd}[column sep=normal]
M_{refl}^s \arrow[r,"\rho"]  \arrow[rd,dashed,"\theta"]
  & \Mss \arrow[d,dashed] \\
    & M_{X^{(d-1)}}
\end{tikzcd}
\end{center}
one deduces that $\rho$ is also an open immersion locally around $[E] \in M_{refl}^s$.
\end{proof}

\begin{lemma}\label{lemma:sep2}
Let $E_1$ and $E_2$ be two polystable quotient sheaves of $\Rss$ having the same reflexive hull $F$ such that $C_{d-2}(E_1) \neq C_{d-2}(E_2)$. Then $\varphi$ separates $[E_1]$ and $[E_2]$ in $\Mss$. 
\end{lemma}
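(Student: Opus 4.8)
The plan is to mirror the argument of Lemma~\ref{lemma:sepR}: for a suitable smooth complete intersection in $X$, produce a $G'$-invariant section on the associated Quot scheme whose pull-back via the construction $\Gamma_{\sheaf{F}}$ of Section~\ref{subsect:semiamplenessTheorems} is a $G$-invariant section of a power of $\sheaf{L}$ on $\Rss$ taking different values at $[E_1]$ and $[E_2]$. Since $E_1^H \cong E_2^H = F$, both sheaves have support $Y = \Supp(F)$, and each $T_i \coloneqq F/E_i$ is supported in codimension $\ge 2$ in $Y$, the cycle $C_{d-2}(E_i)$ recording the length of $T_i$ along the $(d-2)$-dimensional components of $\Supp(T_i)$. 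I would split into two cases: (1) $\Supp(T_1)$ and $\Supp(T_2)$ have different $(d-2)$-dimensional parts; (2) they have the same $(d-2)$-dimensional components but different multiplicities along one of them.

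In Case (1), after possibly swapping $E_1,E_2$, pick a $(d-2)$-dimensional component $Z_0$ of $\Supp(T_1)$ that does not occur in $C_{d-2}(E_2)$. For $a\gg 0$ I would choose a general smooth complete intersection $X^{(d-2)}\subset X$ of $d-2$ divisors in $\Pi_a$, a point $z_0\in X^{(d-2)}\cap Z_0$, and a divisor $D_{d-1}\in\Pi_a$ through $z_0$ that is general subject to this constraint, and set $X^{(d-1)}=X^{(d-2)}\cap D_{d-1}$. Because the open conditions entering Lemma~\ref{lemma:nonflatRestriction}, Lemma~\ref{lemma:pureRestriction} and the effective restriction Theorems~\ref{thm:StableRestriction}--\ref{thm:RestrictionSemi} remain satisfied on a dense subset of the linear system of divisors through $z_0$ once $a\gg 0$, I may arrange simultaneously: $X^{(d-1)}$ is smooth; $D_1,\dots,D_{d-1}$ is $\sheaf{F}_s$-regular over $\Rss\setminus T$ for a $G$-stable closed $T\subset\Rss$ of codimension $\ge 2$ avoiding $[E_1]$ and $[E_2]$ (Corollary~\ref{lemma:completeRestriction} and Remark~\ref{remark:avoidingPoints}); $X^{(d-1)}$ avoids $\Supp(T_2)$ and every component of $\Supp(T_1)$ other than $Z_0$; and $F|_{X^{(d-1)}}$ is $\hat{\mu}$-semistable. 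Then $E_2|_{X^{(d-1)}}=F|_{X^{(d-1)}}$ is pure and $\hat{\mu}$-semistable, hence GIT-semistable in $\Quot_{X^{(d-1)}}$ by Lemma~\ref{lemma:gitSs}; on the other hand $D_1,\dots,D_{d-1}$ is $F$-regular (as $F$ is $S_2$ and $X^{(d-1)}$ is general) so $\Tor_1^{\sheaf{O}_X}(F,\sheaf{O}_{X^{(d-1)}})=0$, while $D_{d-1}$ passes through the associated point $z_0$ of the finite-length sheaf $T_1|_{X^{(d-2)}}$, so $\Tor_1^{\sheaf{O}_X}(T_1,\sheaf{O}_{X^{(d-1)}})\ne 0$, and restricting $0\to E_1\to F\to T_1\to 0$ exhibits this nonzero torsion sheaf as a subsheaf of $E_1|_{X^{(d-1)}}$. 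Thus $E_1|_{X^{(d-1)}}$ is not pure, hence $\hat{\mu}$-unstable and, by Lemma~\ref{lemma:gitSs}, GIT-unstable in $\Quot_{X^{(d-1)}}$; it still defines a point of the open set over which $\Gamma_{\sheaf{F}}$ is defined (it is $m'$-regular of Hilbert polynomial $P'$). Choosing $\sigma\in H^0(\Quot_{X^{(d-1)}},H_{l'}^{\nu'})^{G'}$ with $\sigma([F|_{X^{(d-1)}}])\ne 0$ (as in Lemma~\ref{lemma:invariantSections}), the section $\Gamma_{\sheaf{F}}(\sigma)\in H^0(\Rss,\sheaf{L}^{\nu})^G$ vanishes at $[E_1]$ but not at $[E_2]$, so $\varphi$ separates them.

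In Case (2) any $X^{(d-1)}$ meeting the offending component makes both restrictions non-pure, so the dichotomy of Case (1) is unavailable. Here I would reduce to $d=2$: restricting to a general smooth complete intersection $X^{(d-2)}\subset X$ of $d-2$ divisors in $\Pi_a$, the sheaves $E_i|_{X^{(d-2)}}$ are $2$-dimensional $\hat{\mu}$-semistable sheaves on the smooth projective variety $X^{(d-2)}$ with common reflexive hull $F|_{X^{(d-2)}}$ and with $0$-cycles $C_0(E_i|_{X^{(d-2)}}) = C_{d-2}(E_i)\cdot X^{(d-2)}$, which stay distinct for general $X^{(d-2)}$ (if $C_{d-2}(E_1)\ne C_{d-2}(E_2)$, a general such intersection meets some component with nonzero net multiplicity at a point lying on no other relevant component). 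Writing $w_{l',m'}\cdot h^{d-1}=(w_{l',m'}\cdot h^{d-2})\cdot h$ and applying Lemma~\ref{lemma:Giso} (with the value of $j$ for which $h^{d-j}=h^{d-2}$), together with Corollary~\ref{lemma:completeRestriction}, the frame-bundle construction of Section~\ref{subsect:semiamplenessTheorems}, and the extension result Lemma~\ref{lemma:extension}, a $G$-invariant section of the analogous determinant line bundle on the parameter scheme $\Rss_{X^{(d-2)}}$ separating $[E_1|_{X^{(d-2)}}]$ and $[E_2|_{X^{(d-2)}}]$ pulls back and extends to a $G$-invariant section of a power of $\sheaf{L}$ on $\Rss$ separating $[E_1]$ and $[E_2]$. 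So it suffices to treat $d=2$, where the assertion is that the (Donaldson--Uhlenbeck-type) moduli space separates $0$-cycles; for this I would adapt the arguments of Huybrechts--Lehn~\cite[Ch.~8]{huybrechts2010geometry} and Jun Li~\cite{li1993algebraic}, realising $\sheaf{L}$ as inducing a $G$-invariant morphism from the parameter scheme to a projective variety of $0$-cycles on $X^{(d-2)}$ (through restriction to curves meeting the cycle, together with the Grothendieck embedding of the relevant Quot/Hilbert scheme) which distinguishes the two cycles.

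The hard part is exactly Case (2), the equal-support/different-multiplicity situation handled by the reduction to $d=2$: there the two sheaves restrict to the \emph{same} sheaf on a general curve and to \emph{unstable} sheaves on any curve through the relevant point, so no single curve-restriction section of the type produced by $\Gamma_{\sheaf{F}}$ can detect the difference, and one genuinely needs the finer morphism to the space of $0$-cycles. Making that construction work in the present generality — with $X^{(d-2)}$ of arbitrary dimension and the sheaves of non-maximal, possibly singular support, rather than torsion-free sheaves on a surface — is the technical heart of the lemma; everything else is a routine adaptation of Lemma~\ref{lemma:sepR} and the semiampleness machinery of Section~\ref{subsect:semiamplenessTheorems}.
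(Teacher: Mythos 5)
Your reduction to the case $d=2$ (restrict to a general $X^{(d-2)}$, use Lemma~\ref{lemma:Giso} and the weak-normality extension of Lemma~\ref{lemma:extension} to transport separating sections back to $\Rss$) is exactly what the paper does. But the substance of the lemma is the $d=2$ case itself, and there your proposal stops at ``adapt Huybrechts--Lehn and Jun Li, realising $\sheaf{L}$ as inducing a morphism to a projective variety of $0$-cycles.'' That is a description of what the statement morally means, not an argument, and it is not the mechanism the paper uses. The actual proof picks a point $y_0$ with $\length_{y_0}(C_0(E_1))>\length_{y_0}(C_0(E_2))\ge 0$, chooses a \emph{pencil} of divisors $D_t$ through $y_0$ (blown up to a base-point-free pencil $p:\widetilde X\to\CP^1$), builds the relative Simpson moduli space over a neighbourhood $U$ of $0\in\CP^1$, and pulls back a section $v$ to get a family of $G$-invariant sections $\widetilde v_t\in H^0(\Rss,\sheaf{L}^\nu)^G$. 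The key step — which is entirely absent from your proposal — is the comparison of the orders of vanishing at $t=0$ of $\widetilde v^1$ and $\widetilde v^2$: from $0\to q^*E_i\to q^*F\to T_i\to 0$ one gets $\det\bigl(p_!(T_i\otimes p_2^*\widetilde u)\bigr)^{-1}\cong\sheaf{O}_{\CP^1}(l_i\cdot 0)|_U$ with $l_i=\length_{y_0}(C_0(E_i))$, so $l_1>l_2$ forces $\widetilde v^1$ and $\widetilde v^2$ to vanish to different orders at $0$, and a linear combination $\widetilde v_{t_1}-\lambda\widetilde v_{t_2}$ then separates $[E_1]$ and $[E_2]$. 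Without this (or an equally concrete substitute) the hard case is unproved; you yourself correctly observe that no single curve-restriction section can work, which is precisely why a one-parameter family of curves and a vanishing-order argument are needed.

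Two smaller remarks. Your Case (1) is an unnecessary detour: the pencil argument covers it uniformly, since $\length_{y_0}(C_0(E_2))=0$ is allowed. If you do want to run it, you must still verify that $[E_1]$ lies in the locus $S\setminus T$ where $\Gamma_{\sheaf{F}}$ is computed by pullback (so that GIT-instability of $E_1|_{X^{(d-1)}}$ actually forces $\Gamma_{\sheaf{F}}(\sigma)([E_1])=0$) and that the non-pure sheaf $E_1|_{X^{(d-1)}}$ still lands in the open set $Q$ where Lemma~\ref{lemma:lePotier} identifies $H_{l'}$ with the determinant bundle; these points are plausible but not automatic and you only assert them.
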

\begin{proof}
Choose $d-1$ divisors $D_1, \ldots, D_{d-1} \in \Pi_a$ such that $X^{(d-1)} = D_1 \cap \ldots \cap D_{d-1}$ and $X^{(d-2)} \coloneqq D_1 \cap \ldots \cap D_{d-2}$ are complete intersections in $X$. If these divisors are chosen general enough, then
\begin{enumerate}[nosep]
  \item $E_1|_{X^{(d-2)}}$ and $E_2|_{X^{(d-2)}}$ remain polystable and have the same reflexive hull $F|_{X^{(d-2)}}$,
  \item $C_{0}(E_1|_{X^{(d-2)}}) \neq C_{0}(E_1|_{X^{(d-2)}})$.
\end{enumerate}
The first condition is satisfied by Theorem~\ref{thm:StableRestriction} and \cite[Cor.~1.1.14]{huybrechts2010geometry}, and the second one clearly holds for a sufficiently general complete intersection. Moreover, by Corollary~\ref{lemma:completeRestriction} and Lemma~\ref{lemma:Giso}, we have the following $G$-isomorphisms
\begin{align*}
    \sheaf{L} \cong \lambda_{\sheaf{F}|_{\Rss \times X^{(d-2)}}}(w_{l',m'}|_{X^{(d-2)}} \cdot h|_{X^{(d-2)}}) \cong \lambda_{\sheaf{F}|_{\Rss \times X^{(d-1)}}}(w_{l',m'}|_{X^{(d-1)}})
\end{align*}
over a $G$-stable open subset $V \subset \Rss$ containing $E_1|_{X^{(d-2)}}$ and $E_2|_{X^{(d-2)}}$, whose complement has codimension $\geq 2$ in $\Rss$. Thus, it is enough to find sections of $\lambda_{\sheaf{F}|_{\Rss \times X^{(d-2)}}}(w_{l',m'}|_{X^{(d-2)}} \cdot h|_{X^{(d-2)}})$ over $V$ that separate $E_1|_{X^{(d-2)}}$ and $E_2|_{X^{(d-2)}}$, since then we can extend them over the whole weakly normal variety $\Rss$, exactly as we did in the proof of the Semiampleness Theorem~\ref{thm:semiAmpleness}. In conclusion, we can restrict ourselves to the case $d = 2$, which we treat in detail below.

In the two-dimensional case, the proof follows closely the one given by Jun Li in \cite[Lem.~3.6]{li1993algebraic}. We may assume that there is a point $y_0 \in \Supp(F)$ such that $\length_{y_0}(C_0(E_1)) > \length_{y_0}(C_0(E_2)) \geq 0$. Choose $D_0 \in \Pi_a$ a smooth divisor passing through $y_0$ such that $E_1|_{D_0}$ and $E_2|_{D_0}$ are reflexive at $D_0 \setminus \{y_0\}$, and $F|_{D_0}$ is $\hat{\mu}$-semistable. Now take another smooth divisor $D_1 \in \Pi_a$ such that $D_0 \cap D_1$ is a smooth complete intersection that avoids $y_0$. By blowing up $X$ along $D_0 \cap D_1$ we get a smooth projective scheme $\widetilde{X}$ with a natural map $q: \widetilde{X} \to X$. Moreover, the linear system $D_t$ generated by $D_0$ and $D_1$ extends to a base-point-free pencil $\widetilde{D}_t$ on $\widetilde{X}$, which induces a morphism  $p: \widetilde{X} \to \CP^1$. Note that $\widetilde{D}_t$ is canonically isomorphic with $D_t$ via $q$ for all $t \in \CP^1$. For this reason, in what follows we freely identify $q^*E_1|_{\widetilde{D}_t}$, $q^*E_2|_{\widetilde{D}_t}$ and $q^*F|_{\widetilde{D}_t}$ with $E_1|_{D_t}$, $E_2|_{D_t}$ and $F|_{D_t}$ respectively. 

By the choice of the linear system, locally around $0 \in \CP^1$, we can find an open neighborhood $U \subset \CP^1$ such that any fiber $\widetilde{D}_t$ is smooth for $t \in U$, and $E_1|_{D_t}$ and $E_2|_{D_t}$ are $\hat{\mu}$-semistable reflexive sheaves for $0\neq t \in U$. We denote by $p_U : \widetilde{X}_U \to U$ the pullback of $p$ along the inclusion $U \subset \CP^1$. 

For all $t \in U$, the $F|_{D_t}$ have the same Hilbert polynomial, say $\widetilde{P}$. According to \cite[Thm.~4.3.7]{huybrechts2010geometry}, there exists a relative moduli space $M_{\widetilde{X}_U/U}(\widetilde{P}) \to U$ endowed with an ample line bundle $\sheaf{O}_{M_{\widetilde{X}_U/U}}(1)$, such that each fiber $M_{D_t}$ is the Simpson moduli space of semistable sheaves on $D_t$ with Hilbert polynomial $\widetilde{P}$. Since $F|_{D_0}$ is $\hat{\mu}$-semistable, there is a section $v_0 \in H^0(M_{D_t}, \sheaf{O}_{M_{D_t}}(k))$ for $k \gg 0$, nonvanishing at $F|_{D_0}$. By using the base change theorem and shrinking $U$ if necessary, we can extend $v_0$ to a section $v \in H^0(M_{\widetilde{X}_U/U}, \sheaf{O}_{M_{\widetilde{X}_U/U}}(k))$.

As in Section~4.2, for each $t \in U$, there exists a rational morphism $\psi_t : \Rss  \dashrightarrow M_{D_t}$ corresponding to the universal family $\sheaf{F}$ on $\Rss \times X$. Moreover, we can pullback the restricted section $v|_{M_{D_t}}$ via $\psi_t$ to obtain a canonical section $\widetilde{v}_t \in H^0(\Rss,\sheaf{L}^\nu)^G$ for $\nu \gg 0$. Since this construction is canonical, we can view the $\widetilde{v}_t$ as restrictions of a section $\widetilde{v} \in H^0(\Rss \times U, p_1^*\sheaf{L}^\nu)^G$, where $p_1 : \Rss \times U \to \Rss$ is the first projection. 

Recall that  $\sheaf{L} = \lambda_\sheaf{F}(u)$ on $\Rss$, for an appropriate Grothendieck class $u \in K(X)$. Denote by $\widetilde{u}$ the pullback of the class $u$ to $K(\widetilde{X})$ and by $\widetilde{\sheaf{F}} \coloneqq (1_{\Rss} \times q)^*\sheaf{F}$. Consider the determinant line bundle
\begin{align*}
    \sheaf{L}_{\Rss \times \CP^1} \coloneqq \det((1_{\Rss} \times p)_{!}( \widetilde{\sheaf{F}}\otimes p_2^*\widetilde{u})) 
\end{align*}
on $\Rss \times \CP^1$. For $i = 1,2$, the fiber of $\sheaf{L}_{\Rss \times \CP^1}$ over $[E_i] \in \Rss$ gives a line bundle
\begin{align*}
    \sheaf{L}_{i,\CP^1} \coloneqq \det(p_{!}(q^*E_i \otimes p_2^*\widetilde{u}))
\end{align*}
on $\CP^1$. By using the base change theorem, we get
\begin{align*}
    \sheaf{L}_{\Rss \times \CP^1} \cong p_1^*A \otimes p_2^*\sheaf{O}_{\CP^1}(\alpha),
\end{align*}
where $A$ is a line bundle $\Rss$, and $\sheaf{O}_{\CP^1}(\alpha)$ a line bundle on $\CP^1$. Thus, we have canonical isomorphisms
\begin{align*}
    \sheaf{L}_{i,\CP^1} \cong \sheaf{O}_{\CP^1}(\alpha).
\end{align*}
On the other hand, consider the line bundle $p_1^*\sheaf{L}$ on $\Rss \times U$. As above, the fiber of $p_1^*\sheaf{L}$ over $[E_i] \in \Rss$ gives a trivial line bundle $L_i$ on $U$. Thus, if we fix a trivialization of $\sheaf{O}_{\CP^1}(\alpha)$ over $U$, then we have the following canonical isomorphisms
\begin{align*}
    \gamma_i : \sheaf{L}_{i,\CP^1}|_U \to L_i.
\end{align*}
For $i=1,2$, we have the following short exact sequences on $\widetilde{X}_U$:
\begin{align*}
    0 \to q^*E_i \to q^*F \to T_i \to 0,
\end{align*}
where $q: \widetilde{X}_U \to X$ is the natural map, and $T_i$ are torsion sheaves on $\widetilde{X}_U$. This gives us isomorphisms
\begin{align}\label{eq:isoDuals}
    \phi_i : \det(p_{!}(q^*F \otimes p_2^*\widetilde{u})) \otimes \det(p_{!}(T_i \otimes p_2^*\widetilde{u}))^{-1} \to \sheaf{L}_{i, \CP^1}.
\end{align}
over $U$. Since $E_1|_{D_t} \cong E_2|_{D_t} \cong F|_{D_t}$ for $0 \neq t \in U$, $T_i$ is supported on $D_0$ and $\det(p_{!}(T_i \otimes p_2^*\widetilde{u}))^{-1}$ is isomorphic to $\sheaf{O}_{\CP^1}(l_i 0)|_U$, where $l_i = \length(T_i|_{D_0}) = \length_{y_0}(C_0(E_i))$. Note that $l_1 > l_2$ by assumption.

Denote by $\widetilde{v}^i \in H^0(U,L_i^\nu)$ the restriction of $\widetilde{v}$ to $[E_i] \times U$. Since $\gamma_1$ and $\gamma_2$ are canonical, we get that $\phi_1^*(\gamma_1^*(\widetilde{v}^1))$ and $\phi_2^*(\gamma_2^*(\widetilde{v}^2))$ agree over $U \setminus \{ 0 \}$ as sections of the line bundle $\det(p_{!}(q^*F \otimes p_2^*\widetilde{u}))$. Note that the extension of $\phi_1^*(\gamma_1^*(\widetilde{v}^1))$ over $U$ cannot be trivial, since we assumed that $v_0(F|_{D_0}) \neq 0$. It follows by \eqref{eq:isoDuals} that $\widetilde{v}^1$ and $\widetilde{v}^2$ have different vanishing order at $0 \in U$. Thus, for some $t_1, t_2 \in U$, we get
\begin{align*}
    \frac{\widetilde{v}^1(t_1)}{\widetilde{v}^2(t_1)} \neq \frac{\widetilde{v}^1(t_2)}{\widetilde{v}^2(t_2)},
\end{align*}
from which it follows that
\begin{align*}
    \frac{\widetilde{v}_{t_1}([E_1])}{\widetilde{v}_{t_2}([E_1])} \neq \frac{\widetilde{v}_{t_1}([E_2])}{\widetilde{v}_{t_2}([E_2])}.
\end{align*}
Taking $\lambda = \widetilde{v}_{t_1}([E_1])/\widetilde{v}_{t_2}([E_1])$, then $\sigma = \widetilde{v}_{t_1}- \lambda \widetilde{v}_{t_2}$ is a $G$-invariant section of $\sheaf{L}^\nu$ such that $\sigma([E_1]) = 0$ and $\sigma([E_2]) \neq 0$. Therefore $\varphi$ separates $[E_1]$ and $[E_2]$ in $\Mss$.
\end{proof}

It remains to treat the case when two polystable quotiens $E_1$ and $E_2$ of $\Rss$ have the same reflexive hull $E_1^H \cong E_2^H$ and $C_{d-2}(E_1) = C_{d-2}(E_2)$. Here one can prove the following:

\begin{lemma}\label{lemma:sameRefl}
Let $S$ be connected (weakly normal) $\C$-scheme of finite type and $\sheaf{E}$ an $S$-flat family of sheaves of class $c$ on $X$ such that each geometric fiber satisfies $\sheaf{E}_s^H = F$ and $C_{d-2}(\sheaf{E}_s) = C$, where $F$ is a fixed reflexive hull and $C$ a fixed $(d-2)$-dimensional cycle on $X$. Then the classifying morphism $\Psi_\sheaf{E} : S \to \Mss$, given by the Main Theorem, is constant. 
\end{lemma}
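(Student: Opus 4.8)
The plan is to reduce to the case where the base is a curve, and there to push the restriction-theorem machinery one step further, exploiting that fixing the two invariants $\sheaf{E}_s^H$ and $C_{d-2}(\sheaf{E}_s)$ pins down the torsion quotient $\sheaf{E}_s^H/\sheaf{E}_s$ tightly enough that a general complete-intersection curve $X^{(d-1)}\subset X$ misses it altogether. Since $\Mss$ is separated it suffices to prove $\Psi_\sheaf{E}(s_1)=\Psi_\sheaf{E}(s_2)$ for any two closed points $s_1,s_2\in S$; as $S$ is connected and of finite type over $\C$ I would join them by a chain of irreducible curves $\gamma_1,\dots,\gamma_m\subset S$, normalise each $\gamma_j$, and restrict the pulled-back family to the dense open of the normalisation over which it stays flat with fibres still satisfying the hypotheses (nonempty, since the generic point of $\gamma_j$ maps into $S$). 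Constancy of $\Psi_\sheaf{E}$ on each such dense open propagates to $\gamma_j$ (dense open, reduced source, separated target, surjective normalisation) and then chains together. This reduces everything to $S$ a smooth, connected, quasi-projective curve, carrying an $\hat\mu$-semistable family $\sheaf{E}$.

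Next I would pass to a smooth projective model. Let $K$ be the function field of $S$. For each point of the smooth compactification $\bar S\supset S$ apply Langton's valuative criterion — here Lemma~\ref{lemma:RssGproper} together with \cite[Thm.~2.B.1]{huybrechts2010geometry} is exactly what is needed: after a finite field extension one obtains a smooth projective connected curve $\bar S'$, a finite surjective map $\nu:\bar S'\to\bar S$, and an $\bar S'$-flat family $\bar{\sheaf{E}}$ of $\hat\mu$-semistable sheaves of class $c$ restricting to $\nu^*\sheaf{E}$ over $\nu^{-1}(S)$. By base-change compatibility of the classifying morphism, $\Psi_{\bar{\sheaf{E}}}$ agrees with $\Psi_\sheaf{E}\circ\nu$ on $\nu^{-1}(S)$, so it is enough to show $\Psi_{\bar{\sheaf{E}}}:\bar S'\to\Mss$ is constant, and for that (as $\bar S'$ is a proper smooth curve and $\sheaf{A}$ is ample) it is enough to show $\deg_{\bar S'}\Psi_{\bar{\sheaf{E}}}^*\sheaf{A}=0$.

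Now choose the cutting divisors. Pick $D_1,\dots,D_{d-1}\in\Pi_a$ with $a\gg0$, general with respect to the now proper family $\bar{\sheaf{E}}$, so that $X^{(d-1)}:=D_1\cap\cdots\cap D_{d-1}$ is a smooth curve, the sequence $\underline D$ is $\bar{\sheaf{E}}_s$-regular for every $s\in\bar S'$ and $\bar{\sheaf{E}}|_{\bar S'\times X^{(d-1)}}$ is $\bar S'$-flat (Corollary~\ref{lemma:completeRestriction}, with the exceptional set $T$ empty since $\dim\bar S'=1$), and so that $F|_{X^{(d-1)}}$ is $\hat\mu$-semistable on $X^{(d-1)}$ (Theorem~\ref{thm:RestrictionSemi}, applied to the reflexive hull $F$, which is $\hat\mu$-semistable because it contains each $\sheaf{E}_s$ with quotient of codimension $\ge 2$ and $\hat\mu$-semistability is a codimension-one condition). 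The crucial point is the dimension count over the original open curve $S$: the hypotheses give $\Supp(\sheaf{E}_s^H/\sheaf{E}_s)=\Supp(C)\cup(\text{a piece of dimension}\le d-3)$, so as $s$ ranges over the one-dimensional $S$ these supports sweep out a closed subset of $X$ of dimension $\le\max\{d-2,\,1+(d-3)\}=d-2$, which a general $X^{(d-1)}$ of codimension $d-1$ avoids entirely; hence $\sheaf{E}_s|_{X^{(d-1)}}\cong F|_{X^{(d-1)}}$ for every $s\in S$. Therefore the classifying morphism $\beta:\bar S'\to M_{X^{(d-1)}}$ of the curve-family $\mathcal{A}:=\bar{\sheaf{E}}|_{\bar S'\times X^{(d-1)}}$ into the Simpson moduli space is constant on the dense open $\nu^{-1}(S)$, hence constant on all of $\bar S'$ since $\bar S'$ is connected and $M_{X^{(d-1)}}$ is separated. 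Consequently $\beta^*$ of the ample determinant bundle on $M_{X^{(d-1)}}$ — which, by Simpson's construction together with Lemma~\ref{lemma:lePotier} and functoriality of $\lambda_{(-)}$, is a power of $\lambda_\mathcal{A}(w_{l',m'}|_{X^{(d-1)}})$ — is trivial on $\bar S'$. Combining with the isomorphism $\lambda_{\bar{\sheaf{E}}}(w_{l',m'}\cdot h^{d-1})\cong\lambda_\mathcal{A}(w_{l',m'}|_{X^{(d-1)}})$ of Lemma~\ref{lemma:Giso} and the identity $\Psi_{\bar{\sheaf{E}}}^*\sheaf{A}\cong\lambda_{\bar{\sheaf{E}}}(w_{l',m'}\cdot h^{d-1})^e$ from the Main Theorem gives $\deg_{\bar S'}\Psi_{\bar{\sheaf{E}}}^*\sheaf{A}=0$, hence $\Psi_{\bar{\sheaf{E}}}$, and therefore $\Psi_\sheaf{E}$ on $S$, is constant.

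The main obstacle is precisely the dimension count in the third paragraph: it is essential both that the base has been reduced to a curve and that the top-dimensional part of $\sheaf{E}_s^H/\sheaf{E}_s$ is the fixed cycle $C$. Drop either hypothesis and the union of the torsion loci of the fibres sweeps out a subset of dimension $\ge d-1$, a general $X^{(d-1)}$ then meets it, the restrictions $\sheaf{E}_s|_{X^{(d-1)}}$ need no longer be isomorphic, and the argument collapses (indeed Lemmas~\ref{lemma:sepR} and the two-dimensional part of the previous analysis show that when the cycle does vary the morphism genuinely separates points). A secondary point requiring care is the bookkeeping of the finite base change $\nu$ and the verification that constancy of $\beta$ on a dense open propagates to the whole curve and descends along $\nu$ — routine given separatedness of the moduli spaces involved, but worth spelling out, together with checking that the choice of $X^{(d-1)}$ can be made simultaneously general with respect to the proper family $\bar{\sheaf{E}}$ and such that $F|_{X^{(d-1)}}$ stays semistable.
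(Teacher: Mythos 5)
Your strategy is the right one and is, in essence, the argument the paper delegates to Greb--Toma: reduce to a smooth connected curve, compactify the family via Langton, cut with a general complete intersection $X^{(d-1)}$, and observe that the hypotheses $\sheaf{E}_s^H = F$ and $C_{d-2}(\sheaf{E}_s) = C$ force the union over the one-dimensional base of the supports of $F/\sheaf{E}_s$ to have dimension at most $d-2$ (the top-dimensional part is the fixed $\Supp(C)$, while the pieces of dimension $\le d-3$ sweep out at most $1+(d-3)$ dimensions), so that a general $X^{(d-1)}$ misses it entirely and every restricted fibre over the original locus becomes the single sheaf $F|_{X^{(d-1)}}$; the degree of $\Psi_{\bar{\sheaf{E}}}^*\sheaf{A}$ on the proper curve is then zero and ampleness of $\sheaf{A}$ finishes the proof. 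This dimension count is the crux and you have it exactly right, including the observation that $F$ itself is $\hat{\mu}$-semistable because it differs from a semistable sheaf in codimension two.

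Two points must be tightened. First, the step ``$\beta$ is constant on the dense open $\nu^{-1}(S)$, hence constant on $\bar S'$'' presupposes that $\beta$ is a morphism on all of $\bar S'$; at the finitely many Langton points the fibres $\bar{\sheaf{E}}_p$ need not have reflexive hull $F$, and the list of conditions you impose on $X^{(d-1)}$ does not include that their restrictions be pure, $m'$-regular and semistable. Without this, the conclusion $\deg_{\bar S'}\lambda_{\mathcal{A}}(w_{l',m'}|_{X^{(d-1)}})=0$ genuinely fails: a determinant line bundle that is trivial on a dense open subset of a proper curve can acquire degree from a modification concentrated over a boundary point --- this is precisely the mechanism exploited in the proof of Lemma~\ref{lemma:sep2}, where $\det(p_{!}(T_i\otimes p_2^*\widetilde{u}))$ contributes $\sheaf{O}_{\CP^1}(l_i 0)$. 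The fix is a single sentence (apply Theorem~\ref{thm:RestrictionSemi} and Lemma~\ref{lemma:gitSs} to the finitely many boundary sheaves as well, intersecting finitely many dense open conditions on the choice of $X^{(d-1)}$), but it has to be said. Second, the claim that the loci $Z_s$ ``sweep out a closed subset of dimension $\le d-2$'' uses that $\bigcup_s\{s\}\times\Supp(F/\sheaf{E}_s)$ is a constructible subset of $S\times X$ whose fibres over $S$ have their entire $(d-2)$-dimensional part contained in $S\times\Supp(C)$; this follows from the openness of the relative $S_2$-locus of the flat family $\sheaf{E}$ and should be recorded, since it is the only place where the fibrewise invariants are required to vary constructibly in the family rather than merely to be constant fibre by fibre.
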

\begin{proof}
The proof is similar to that of \cite[Lem.~5.7]{greb2017compact} and we omit it.
\end{proof}

We summarize the results of this section in the following corollary:

\begin{cor}\label{cor:resultsep}
If $E_1$ and $E_2$ are two polystable quotient sheaves of $\Rss$ such that either
\begin{enumerate}[nolistsep]
    \item $E_1^H \ncong E_2^H$, or
    \item $E_1^H \cong E_2^H$ with $C_{d-2}(E_1) \neq C_{d-2}(E_2)$,
\end{enumerate}
then $\varphi$ separates $[E_1]$ and $[E_2]$ in $\Mss$. Otherwise, if $E_1^H \cong E_2^H$, $C_{d-2}(E_1) = C_{d-2}(E_2)$ and, furthermore, there is a connected curve passing through $[E_1]$ and $[E_2]$ in $\Rss$, then $\varphi$ maps $[E_1]$ and $[E_2]$ to the same point of $\Mss$.
\end{cor}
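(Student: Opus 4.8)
The corollary is essentially a bookkeeping statement collecting Lemmas~\ref{lemma:sepR}, \ref{lemma:sep2} and \ref{lemma:sameRefl}, so the plan is to dispatch the three cases to these three results. If $E_1^H \ncong E_2^H$ one invokes Lemma~\ref{lemma:sepR}, and if $E_1^H \cong E_2^H$ but $C_{d-2}(E_1) \neq C_{d-2}(E_2)$ one invokes Lemma~\ref{lemma:sep2}; in both situations the cited proof produces --- after restricting the relevant sheaves to a sufficiently general smooth complete intersection $X^{(d-1)}$ and pulling back an ample section from the corresponding Simpson moduli space --- a $G$-invariant section of a power of $\sheaf{L}$ that vanishes at one of the two points but not the other, whence $\varphi([E_1]) \neq \varphi([E_2])$. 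Nothing further is needed for these two cases.

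For the remaining case, put $F := E_1^H \cong E_2^H$ and $C := C_{d-2}(E_1) = C_{d-2}(E_2)$, and let $\gamma \subset \Rss$ be the given connected curve through $[E_1]$ and $[E_2]$; in keeping with the hypothesis I take $\gamma$ to run inside the locally closed locus $Z \subset \Rss$ of quotients $[E]$ with $E^H \cong F$ and $C_{d-2}(E) = C$. Passing to the normalization $\nu : \widetilde{\gamma} \to \gamma$ (and, if $\gamma$ is reducible, chaining through a connected sequence of its components), one gets a smooth --- hence weakly normal --- connected curve, and I would restrict the universal family to the $\widetilde{\gamma}$-flat family $\sheaf{F}_{\widetilde{\gamma}} := (\nu \times \id_X)^*\sheaf{F}$ of sheaves of class $c$ on $X$. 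By construction of $Z$, every geometric fiber of $\sheaf{F}_{\widetilde{\gamma}}$ has reflexive hull isomorphic to $F$ and $(d-2)$-cycle equal to $C$, so Lemma~\ref{lemma:sameRefl} applies and shows that the classifying morphism $\Psi_{\sheaf{F}_{\widetilde{\gamma}}} : \widetilde{\gamma} \to \Mss$ is constant.

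Finally I would identify $\Psi_{\sheaf{F}_{\widetilde{\gamma}}}$ with $\varphi$ along the curve. Unwinding the construction of $\Psi$ from the proof of the Main Theorem --- where $\Psi_\sheaf{E}$ is obtained by descending $\varphi \circ \Phi_\sheaf{E}$ along the projective frame bundle $\pi : \mathcal{S} \to S$, with $\Phi_\sheaf{E}$ the morphism to $\Rss$ classifying the (framed) family --- one sees that $\Psi_{\sheaf{F}_{\widetilde{\gamma}}} = \varphi \circ \nu$ as morphisms $\widetilde{\gamma} \to \Mss$. Since $\nu$ is surjective and $[E_1], [E_2] \in \gamma$, constancy of $\Psi_{\sheaf{F}_{\widetilde{\gamma}}}$ forces $\varphi([E_1]) = \varphi([E_2])$, as claimed. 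The only points deserving care in this last case are that the given connected curve may be arranged to lie in the stratum $Z$, so that the hypotheses of Lemma~\ref{lemma:sameRefl} are met, and the essentially formal verification that the classifying morphism attached to the restricted universal family is the restriction of $\varphi$; all the genuine difficulty sits inside the three lemmas themselves, above all in the Jun Li-type degeneration argument underlying Lemma~\ref{lemma:sep2}.
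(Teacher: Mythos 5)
Your proposal is correct and matches the paper's (implicit) argument: the corollary is stated there without proof as a summary of Lemmas~\ref{lemma:sepR}, \ref{lemma:sep2} and \ref{lemma:sameRefl}, and your dispatch of the three cases, including normalizing the curve to obtain a weakly normal base for Lemma~\ref{lemma:sameRefl} and the formal identification $\Psi_{\sheaf{F}_{\widetilde{\gamma}}} = \varphi \circ \nu$, is exactly the intended reasoning. You are also right to flag that the final hypothesis must be read as requiring the connecting curve to lie in the stratum of quotients with fixed reflexive hull and fixed $(d-2)$-cycle, since otherwise Lemma~\ref{lemma:sameRefl} does not apply; this is implicit in the paper's formulation.
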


\bibliography{bib/main}

\begin{thebibliography}{10}

\bibitem{alper2019existence}
Jarod Alper, Daniel Halpern-Leistner, and Jochen Heinloth.
\newblock Existence of moduli spaces for algebraic stacks, 2019.
\newblock arXiv 1812.01128.

\bibitem{donaldson1985anti}
S.~K. {Donaldson}.
\newblock {Anti self-dual Yang Mills connections over complex algebraic
  surfaces and stable vector bundles}.
\newblock {\em {Proc. Lond. Math. Soc. (3)}}, 50:1--26, 1985.

\bibitem{flenner1984restrictions}
Hubert {Flenner}.
\newblock {Restrictions of semistable bundles on projective varieties}.
\newblock {\em {Comment. Math. Helv.}}, 59:635--650, 1984.

\bibitem{friedman1994smooth}
Robert {Friedman} and John~W. {Morgan}.
\newblock {\em {Smooth four-manifolds and complex surfaces}}, volume~27.
\newblock Berlin: Springer-Verlag, 1994.

\bibitem{gieseker77}
D.~{Gieseker}.
\newblock {On the moduli of vector bundles on an algebraic surface}.
\newblock {\em {Ann. Math. (2)}}, 106:45--60, 1977.

\bibitem{greb2021HYM}
Daniel {Greb}, Benjamin {Sibley}, Matei {Toma}, and Richard {Wentworth}.
\newblock {Complex algebraic compactifications of the moduli space of Hermitian
  Yang-Mills connections on a projective manifold}.
\newblock {\em {Geom. Topol.}}, 25(4):1719--1818, 2021.

\bibitem{greb2017compact}
Daniel {Greb} and Matei {Toma}.
\newblock {Compact moduli spaces for slope-semistable sheaves}.
\newblock {\em {Algebr. Geom.}}, 4(1):40--78, 2017.

\bibitem{grothendieck1961techniques}
Alexandre {Grothendieck}.
\newblock {Techniques de construction et th\'eoremes d'existence en
  g\'eom\'etrie alg\'ebrique. IV: Les schemas de Hilbert}, 1961.

\bibitem{hartshorne1977algebraic}
Robin {Hartshorne}.
\newblock {\em {Algebraic geometry}}, volume~52.
\newblock Springer, New York, NY, 1977.

\bibitem{horing2019algebraic}
Andreas {H\"oring} and Thomas {Peternell}.
\newblock {Algebraic integrability of foliations with numerically trivial
  canonical bundle}.
\newblock {\em {Invent. Math.}}, 216(2):395--419, 2019.

\bibitem{huybrechts2010geometry}
Daniel {Huybrechts} and Manfred {Lehn}.
\newblock {\em {The geometry of moduli spaces of sheaves. 2nd ed}}.
\newblock Cambridge University Press, 2010.

\bibitem{kollar2017families}
J{\'a}nos Koll{\'a}r.
\newblock Families of varieties of general type.
\newblock {\em Available on his webpage, July}, 2017.

\bibitem{langer2004semistable}
Adrian {Langer}.
\newblock {Semistable sheaves in positive characteristic}.
\newblock {\em {Ann. Math. (2)}}, 159(1):251--276, 2004.

\bibitem{langer2006moduli}
Adrian {Langer}.
\newblock {Moduli spaces and Castelnuovo-Mumford regularity of sheaves on
  surfaces}.
\newblock {\em {Am. J. Math.}}, 128(2):373--417, 2006.

\bibitem{langer2021moduli}
Adrian Langer.
\newblock Moduli spaces of semistable modules over lie algebroids, 2021.
\newblock arXiv 2107.03128.

\bibitem{langton1975valuative}
Stacy~G. {Langton}.
\newblock {Valuative criteria for families of vector bundles on algebraic
  varieties}.
\newblock {\em {Ann. Math. (2)}}, 101:88--110, 1975.

\bibitem{lazarsfeld2017positivity}
Robert {Lazarsfeld}.
\newblock {\em {Positivity in algebraic geometry I: Classical setting: Line
  bundles and linear series. II: Positivity for vector bundles, and multiplier
  ideals}}.
\newblock Berlin: Springer, 2004.

\bibitem{le1992fibre}
Joseph {Le Potier}.
\newblock {Fibr\'e d\'eterminant et courbes de saut sur les surfaces
  alg\'ebriques}.
\newblock In {\em {Complex projective geometry, 1989 in Bergen}}, pages
  213--240. Cambridge University Press, 1992.

\bibitem{le1996module}
Joseph {Le Potier}.
\newblock {Module des fibr\'es semi-stables et fonctions th\^eta}.
\newblock In {\em {Moduli of vector bundles. Papers of the 35th Taniguchi
  symposium, Sanda, Japan and a symposium held in Kyoto, Japan, 1994}}, pages
  83--101. New York, NY: Marcel Dekker, 1996.

\bibitem{li1993algebraic}
Jun {Li}.
\newblock {Algebraic geometric interpretation of Donaldson's polynomial
  invariants}.
\newblock {\em {J. Differ. Geom.}}, 37(2):417--466, 1993.

\bibitem{maruyama77}
Masaki {Maruyama}.
\newblock {Moduli of stable sheaves. I}.
\newblock {\em {J. Math. Kyoto Univ.}}, 17:91--126, 1977.

\bibitem{matsumura1989commutative}
Hideyuki {Matsumura}.
\newblock {\em {Commutative ring theory. Transl. from the Japanese by M. Reid.
  Paperback ed}}, volume~8.
\newblock Cambridge University Press, 1989.

\bibitem{mehta1982semistable}
V.~B. {Mehta} and A.~{Ramanathan}.
\newblock {Semistable sheaves on projective varieties and their restriction to
  curves}.
\newblock {\em {Math. Ann.}}, 258:213--224, 1982.

\bibitem{mehta1984restriction}
V.~B. {Mehta} and A.~{Ramanathan}.
\newblock {Restriction of stable sheaves and representations of the fundamental
  group}.
\newblock {\em {Invent. Math.}}, 77:163--172, 1984.

\bibitem{mumford63}
D.~{Mumford}.
\newblock {Projective invariants of projective structures and applications}.
\newblock {Proc. Int. Congr. Math. 1962, 526-530 (1963).}, 1963.

\bibitem{mumfordGIT}
D.~{Mumford}, J.~{Fogarty}, and F.~{Kirwan}.
\newblock {\em {Geometric invariant theory. 3rd enl. ed}}, volume~34.
\newblock Berlin: Springer-Verlag, 3rd enl. ed. edition, 1993.

\bibitem{simpson1994moduli}
Carlos~T. {Simpson}.
\newblock {Moduli of representations of the fundamental group of a smooth
  projective variety. I}.
\newblock {\em {Publ. Math., Inst. Hautes \'Etud. Sci.}}, 79:47--129, 1994.

\bibitem{stacks-project}
The {Stacks project authors}.
\newblock The stacks project.
\newblock \url{https://stacks.math.columbia.edu}, 2020.

\bibitem{takemoto1972stable}
Fumio {Takemoto}.
\newblock {Stable vector bundles on algebraic surfaces}.
\newblock {\em {Nagoya Math. J.}}, 47:29--48, 1972.

\bibitem{uhlenbeck1986existenceHYM}
K.~{Uhlenbeck} and S.~T. {Yau}.
\newblock {On the existence of Hermitian-Yang-Mills connections in stable
  vector bundles}.
\newblock {\em {Commun. Pure Appl. Math.}}, 39:s257--s293, 1986.

\end{thebibliography}

\Address

\end{document}